\numberwithin{equation}{section}
\theoremstyle{plain}
\newtheorem{theorem}{Theorem}[section]
\newtheorem{corollary}[theorem]{Corollary}
\newtheorem{lemma}[theorem]{Lemma}
\newtheorem{proposition}[theorem]{Proposition}
\theoremstyle{definition}
\newtheorem{definition}[theorem]{Definition}
\theoremstyle{remark}
\newtheorem{assumption}[theorem]{Assumption}
\newtheorem{remark}[theorem]{Remark}
\newcommand{\geo}{\rm Geo}
\newcommand{\lmt}[2]{\mathop{\lim}_{{#1} \rightarrow {#2}} }
\newcommand{\lip}[1]{{\mathrm{lip}}({#1})}
\newcommand{\esup}[1]{{\mathrm{ess~sup}}{#1}}
\newcommand{\lmts}[2]{\mathop{\overline{\lim}}_{{#1} \rightarrow {#2}} }
\newcommand{\tr}{{\rm{tr}}}
\renewcommand{\H}{{\mathrm{Hess}}}
\newcommand{\mm}{\mathfrak m}
\newcommand{\ms}{(X,\d,\mm)}
\newcommand{\cd}{{\rm CD}(k, \infty)}
\newcommand{\rcdkn}{{\rm RCD}^*(k, N)}
\newcommand{\rcd}{{\rm RCD}(k, \infty)}
\newcommand{\N}{\mathbb{N}}
\newcommand{\R}{\mathbb{R}}
\newcommand{\g}{\mathrm{g}}
\newcommand{\Q}{\mathrm{Q}}
\newcommand{\supp}{\mathop{\rm supp}\nolimits}   %%\newcommand{\span}{\mathop{\rm span}\nolimits}   %
\newcommand{\Lip}{\mathop{\rm Lip}\nolimits}
\renewcommand{\d}{{\mathrm d}}
\newcommand{\D}{{\mathrm D}}
\newcommand{\restr}[1]{\lower3pt\hbox{$|_{#1}$}}
\newcommand{\la}{{\langle}}
\newcommand{\ra}{{\rangle}}
\newcommand{\nchi}{{\raise.3ex\hbox{$\chi$}}}
\newcommand{\lims}{\varlimsup}
\newcommand{\loc}{{\rm loc} }
\title{\Large{\bf Characterizations of monotonicity of  vector fields on metric measure spaces}
}
\begin{document}
\author{ Bang-Xian Han \thanks
{  University of Bonn,  Institute for  Applied Mathematics,
han@iam.uni-bonn.de} 
}

\date{\today}
\maketitle

\begin{abstract}
We characterize the convexity of functions and the monotonicity of  vector fields on  metric measure spaces with Riemannian Ricci curvature bounded from below.  Our result offers a new approach to deal with some rigidity theorems such as ``splitting theorem" and ``volume cone implies metric cone theorem" in non-smooth context. 
\end{abstract}

\textbf{Keywords}: continuity equation,  convex function,  metric measure space,  metric rigidity, monotone vector field, optimal transport.\\

\textbf{MSC Codes}: 30Lxx, 51Fxx.

\tableofcontents

%%%%%%%%%%%%%%%%%%%%%%%%%%%%
% 前言
%%%%%%%%%%%%%%%%%%%%%%%%%%%%

\section{Introduction}
In the past twenty years, the displacement convexity of functionals on Wasserstein space has been deeply studied,  and it has applications  in many fields such as differential equation theory, probability theory, differential and metric geometry (see   \cite{AGS-G} and \cite{V-O} for an overview of related theories). 

\bigskip

One of the most  interesting functionals is the Boltzmann entropy. Let  $(M, \g, V_\g)$ be a Riemannian manifold.  The Boltzmann entropy ${\rm Ent}_{V_\g}(\cdot)$ is defined by
\begin{equation*}
{\rm Ent}_{V_\g}(\mu):=
\left \{\begin{array}{ll}
 \int \rho \ln \rho \,\d{V_\g}  ~~~\text{if}~~\mu=\rho \,V_\g,\\
\infty  ~~~~~~~~~~~~~~~\text{otherwise}
\end{array}\right.
\end{equation*}
It is known from  \cite{SVR-T} that  the  convexity of ${\rm Ent}_{V_\g}(\cdot)$ in Wasserstein space
characterizes the lower Ricci curvature bound of $M$. It is proved by Erbar  in \cite{E-H} that the gradient flow of ${\rm Ent}_{V_\g}$ in Wasserstein space can be identified  with the heat flow in the following sense: let $\bar{\mathcal{H}}_t(fV_\g)$ be the Wasserstein gradient flow of ${\rm Ent}_{V_\g}$ starting from $fV_\g \in \mathcal{P}_2(M)$, $\mathcal{H}_t(f)$ be the solution of the heat equation with initial datum $f\in L^2(M, V_\g)$,  then $\mathcal{H}_t(f)\,V_\g=\bar{\mathcal{H}}_t(fV_\g)$.

Moreover, we have  the following well-known theorem. 

\begin{theorem} [Von Renesse-Sturm \cite{SVR-T}, Erbar  \cite{E-H}] \label{th-intro-1}
 Let $(M, \g, V_\g)$ be a Riemannian manifold. Then the following characterizations are equivalent.

\begin{itemize}
\item [1)] The Ricci curvature of $M$ is uniformly bounded from below by a constant $K\in \R$.
\item [2)] The entropy  ${\rm Ent}_{V_\g}(\cdot)$ is $K$-convex  in Wasserstein space.
\item [3)] For any  probability  measure $\mu$, there exists a unique  ${\rm EVI}_K$-gradient flow of ${\rm Ent}_{V_\g}$ in Wasserstein space starting from $\mu$.
\item [4)] The exponential contraction of the heat flows in Wasserstein distance
\[
W_2\big (\mu^1_t, \mu^2_t\big )\leq e^{-Kt}W_2(\mu^1_0, \mu^2_0), ~~\forall t>0
\]
holds for any two heat flows  $\mu_t^i:=\mathcal{H}_t (f^i)V_\g$, $i=1,2$.

\item [5)] There exits a  heat kernel $\rho_t(x, \d z)V_\g(z)=\bar{\mathcal{H}}_t(\delta_x)$ for any $x\in X$,  such that  the exponential contraction of heat kernels in Wasserstein distance
\[
W_2\big(\rho_t(x, \d z)V_\g(z), \rho_t(y, \d z)V_\g(z)\big) \leq e^{-Kt} \d(x, y)
\]
holds for any $x, y \in X$ and $t>0$.
\item [6)]  The gradient estimate of heat flow
\[
|\D \mathcal{H}_t(f) |^2(x) \leq e^{-2Kt}\mathcal{H}_t(|\D f|^2)(x), ~~V_\g-\text{a.e.}~ x\in X
\]
holds for any $f\in W^{1,2}(M)$.
\end{itemize}
\end{theorem}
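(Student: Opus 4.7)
My plan is to establish the cycle $(1) \Rightarrow (2) \Rightarrow (3) \Rightarrow (4) \Rightarrow (5) \Rightarrow (6) \Rightarrow (1)$, which splits naturally into three blocks: an optimal-transport block producing $\mathrm{EVI}_K$-gradient flows, a block passing from $\mathrm{EVI}_K$ to heat-kernel contraction, and a closing block returning to Ricci via Kuwada's duality and Bakry-\'Emery $\Gamma_2$ calculus.

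For $(1) \Rightarrow (2)$ I would follow the von Renesse-Sturm calculation: fix $\mu_0, \mu_1 \in \mathcal{P}_2(M)$ absolutely continuous with Brenier potential $\varphi$, write the displacement interpolation as $\mu_t = (\exp(-t\nabla\varphi))_{\#} \mu_0 = \rho_t V_\g$, and use the change-of-variables formula together with the Jacobi equation along geodesics to reduce $\frac{\d^2}{\dt^2}{\rm Ent}_{V_\g}(\mu_t)$ to an integral involving $|\Hess\varphi_t|^2 + \Ric(\nabla\varphi_t,\nabla\varphi_t)$; the hypothesis $\Ric \ge K$ then yields the $K$-convexity. For $(2) \Rightarrow (3)$ I would invoke the Ambrosio-Gigli-Savar\'e theory of gradient flows: on the Wasserstein space over a Riemannian manifold the Hilbertian tangent structure upgrades the metric gradient flow of the $K$-geodesically convex entropy to a full $\mathrm{EVI}_K$-solution.

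The step $(3) \Rightarrow (4)$ is the standard consequence of $\mathrm{EVI}_K$: testing the inequality for both flows against each other and applying Gronwall to $W_2^2(\mu^1_t,\mu^2_t)$ yields the exponential contraction. For $(4) \Rightarrow (5)$ I would approximate each Dirac mass by a sequence of smooth densities, push forward under the heat semigroup, and pass to the limit using joint continuity of the heat kernel. The step $(5) \Rightarrow (6)$ is Kuwada's duality: for Lipschitz $f$, Kantorovich duality for $W_2$ converts the heat-kernel contraction into the pointwise inequality between $|\D\mathcal{H}_t f|^2$ and $e^{-2Kt}\mathcal{H}_t(|\D f|^2)$, and a density argument extends it to $f \in W^{1,2}(M)$. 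Finally for $(6) \Rightarrow (1)$ I would Taylor-expand the gradient estimate at $t = 0$; the zero- and first-order terms cancel, and the second-order terms deliver $\frac{1}{2}\Delta|\nabla f|^2 - \langle\nabla f,\nabla\Delta f\rangle \ge K|\nabla f|^2$, which by the Bochner identity is exactly $\Ric \ge K$ on a smooth manifold.

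The hardest step to justify rigorously is $(2) \Rightarrow (3)$: $K$-geodesic convexity alone only produces a curve of maximal slope, and promoting it to the stronger $\mathrm{EVI}_K$ formulation genuinely requires a Hilbertian tangent structure on the Wasserstein space. This is precisely the subtlety that in non-smooth contexts separates $\cd$ from $\rcd$ spaces and motivates the framework used in the remainder of the paper. A secondary technical difficulty lies in justifying the differentiations inside Kuwada's duality and inside the $\Gamma_2$ Taylor expansion, both of which rely on careful regularization via the heat semigroup itself.
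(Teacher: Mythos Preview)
The paper does not prove this theorem at all: Theorem~\ref{th-intro-1} is stated in the introduction as a known result, attributed to von~Renesse--Sturm \cite{SVR-T} and Erbar \cite{E-H}, and serves purely as background and motivation for the paper's own characterization theorems (Theorems~\ref{mainth-0}, \ref{mainth-1}, \ref{mainth-2}) on $K$-convex functions and $K$-monotone vector fields in the $\rcd$ setting. There is therefore nothing in the paper to compare your sketch against.

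That said, your outline is a faithful summary of how this equivalence is established in the cited literature, and the logical structure you propose is the standard one. Your identification of $(2)\Rightarrow(3)$ as the most delicate step is also correct and well-observed: it is exactly this passage from displacement convexity to $\mathrm{EVI}_K$ that requires the infinitesimally Hilbertian structure, and the paper itself echoes this point when it distinguishes ${\rm CD}$ from ${\rm RCD}$ spaces. If anything, one could note that the original von~Renesse--Sturm paper establishes the equivalence of $(1)$, $(4)$, and $(6)$ directly via Bochner and coupling arguments, without passing through $\mathrm{EVI}_K$; the full cycle you describe, incorporating $(2)$ and $(3)$, emerged later through the Ambrosio--Gigli--Savar\'e program. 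But as a proof sketch of the assembled statement, your proposal is sound.
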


The   notion of synthetic  curvature-dimension condition of (non-smooth) metric measure spaces is proposed by  Lott-Villani in \cite{Lott-Villani09}  and Sturm in \cite{S-O1, S-O2}. The characterization $2)$ in Theorem \ref{th-intro-1} is adopted as a definition of synthetic lower Ricci curvature bound for an abstract metric measure space.
Later on,  the curvature-dimension condition is refined  by Ambrosio-Gigli-Savar\'e (see \cite{AGS-M} and \cite{G-O}), which we call Riemannian curvature-dimension condition or ${\rm RCD}$ condition for short. It is known that the family of $\rcd$ spaces  includes  weighted Riemannian 
manifolds satisfying  curvature-dimension condition \`a la Bakry-\'Emery,  as well as their  measured Gromov-Hausdorff limits, and Alexandrov spaces with lower curvature bound.

 In  ${\rm RCD}$ setting,   heat flow is  understood as the $L^2$-gradient flow of Cheeger energy.  All those chatacterizations on manifolds  are known to be valid in  appropriate weak sense on metric measure spaces (see \cite{AGS-B, AGS-C, AGS-M}). Furthermore, more entropy-like (internal energy) functionals have been studied in \cite{AMS-N} and \cite{EKS-O}, which can be used to characterize  $\rcdkn$ condition.

\bigskip

Besides the Boltzmann entropy (and other internal energy functionals), another important example is  the potential energy
\[
U(\cdot): {\mathcal{P}_2}(X) \ni \mu \mapsto \int_X u(x)\,\d \mu(x),
\]
where $u$ is a lower semicontinuous  function  whose negative part has squared-distance growth. When $X$ is  $\R^n$ or  a general Hilbert space, it is known from  \cite{AGS-G} that  the convexity of $u$ can be characterized using $U(\cdot)$ and its gradient flow.
Then we would like to characterize the convexity of $U(\cdot)$ in the setting of non-smooth metric measure spaces.  In this direction, several results have been obatined by Sturm, Ketterer etc., in \cite{S-G, K-O, S-N, GKKO-R}.  However,  there are still some missing components in the expected characterization theorem.  So the  first aim of the current work is to fill these gaps. 
 
 On the other hand, for  Ricci-limit spaces which are measured Gromov–Hausdorff limits of Riemannian manifolds with
Ricci curvature uniformly bounded from below, there are  two important (almost) rigidity theorems  ``(almost) splitting theorem" and `` (almost) volume cone implies (almost) metric cone theorem"  (see \cite{CC-L, CC-O1, CC-O2,CC-O3} by Cheeger and Colding).
In the proofs of these rigidity theorems on Ricci-limtit spaces,   the analysis on some special $K$-convex functions play key roles. More precisely, the existence of some $K$-convex functions implies warped product structure of the spaces.  For example, in  ``volume cone implies metric cone theorem" (cf. \cite{CC-L, DPG-F}), the target function is the squared distance function $u:=\frac12\d^2(\cdot, {\rm O})$ where ${\rm O}$ is a fixed point. In this case $\H_u={\rm Id}_N$, so $u$ is a ``$N$-convex function".  In  ``splitting theorem" (cf. \cite{Cheeger-Gromoll-splitting, G-S}), the target function is the Busemann function associated to a  line which is harmonic, so  it can be regarded as a ``0-convex function". In  $\rcd$ and $\rcdkn$  metric measure spaces, it is still interesting to reprove these rigidity results (cf. \cite{DPG-F, G-S})  by studying the corresponding $K$-convex functions. 
 
As we mentioned above, in \cite{K-O, S-G, LierlSturm2018}  the authors have proved some results concerning the $K$-convex functions on $\rcdkn$ metric measure spaces. However these results are still insufficient to study the rigidity theorems in practice, since the prerequisite  seems to be too restrictive.   This encourages us to study $K$-convex functions  in  ${\rm RCD}$ setting  with fewer assumptions.

\bigskip

Before introducing the main results, we should clarify the relationship between the Wasserstein gradient flow of $U(\cdot)$ and the flow generated by the non-smooth vector field  $\nabla u$, as we identify the heat flow and the gradient flow of entropy before.

 In \cite{AT-W},  Ambrosio and Trevisan extend the famous Di Perna-Lions theory to $\rcd$ metric measure spaces. They prove that the  continuity equation 
\begin{equation}\label{eq:continsym}
\partial_t\mu_t+\nabla\cdot(-\mu_t\nabla u)=0,
\end{equation} 
is well posed under some assumptions on the Sobolev regularity of $u$.  They prove the existence and uniqueness of the solution to \eqref{eq:continsym} for any initial condition $\mu_0 \in \mathcal{P}(X)$ with $\mu_0 \leq C_0\mm$.  They also show the existence and uniqueness of the regular Lagrangian flow $(F_t)_{t\in [0, T)}: X \mapsto X $ such that  $\mu_t=(F_t)_\sharp \mu_0 \leq C_1\mm$. 

In \cite{GH-C},  Gigli and the author  study   the absolutely continuous curves in Wasserstein space through its corresponding continuity equation.  It is proved  that  $(\mu_t)$ solves \eqref{eq:continsym} if and only if it is a gradient flow of $U: \mu \mapsto \int u\,\d \mu$ in Wasserstein space.  Formally speaking,  $(\mu_t)$ is the gradient flow of $U$ if and only if the velocity field of its corresponding continuity equation is $-\nabla u$.

The first main result in this paper is  the following characterization theorem.

\begin{theorem}[Theorem \ref{mainth-0} and Theorem \ref{mainth-1}]\label{theorem-1}
Let $\ms$ be a $\rcd$ space, $u$ be a scalar function with appropriate  regularities.
Then the following characterizations are equivalent.
\begin{itemize} 
\item [1)] $u$ is   {infinitesimally $K$-convex}:  $\H_u(\nabla f, \nabla f) \geq K|\D f|^2$ $\mm$-a.e. for all $f\in W^{1,2}$, where $\H_u(\cdot, \cdot)$ is the Hessian of $u$.
\item [2)] $u$ is {\sl weakly $K$-convex}, i.e. $U(\cdot)$ is $K$-displacement convex in Wasserstein space.
\item [3)] $\nabla u$ is {\sl $K$-monotone} in the sense that  
\[
\int \la\nabla u, \nabla \varphi \ra\,\d \mu^1+\int \la \nabla u, \nabla \varphi^c \ra\,\d \mu^2 \geq K W^2_2(\mu^1,\mu^2)
\]
for any $\mu^1, \mu^2 \in \mathcal{P}_2$ with bounded densities and bounded supports, where $(\varphi, \varphi^c)$ are Kantorovich potentials associated to $(\mu^1, \mu^2)$.

\item [4)] The exponential contraction in Wasserstein distance
\[
W_2(\mu^1_t, \mu^2_t)\leq e^{-Kt}W_2(\mu^1_0, \mu^2_0), ~~\forall t>0
\]
holds for any two solutions $(\mu_t^1), (\mu_t^2)$ to the continuity equation \eqref{eq:continsym}.
\item [5)] The regular Lagrangian flow $(F_t) $ associated to $-\nabla u$ has a unique continuous representation, so that $F_t(x)$ can be determined at any $x\in X$. Furthermore, the exponential contraction
\[
\d(F_t(x), F_t(y)) \leq e^{-Kt} \d(x, y)
\]
holds for any $x, y \in X$ and $t>0$. 
\item [6)] For any $f\in W^{1,2}\ms$, we have $f\circ F_t \in W^{1,2}$ and
\[
|\D (f\circ F_t) |(x) \leq e^{-Kt}|\D f|\circ F_t(x), ~~\mm-\text{a.e.} ~x\in X.
\]
\end{itemize}
\end{theorem}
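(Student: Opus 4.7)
The plan is to prove the six-way equivalence by establishing the cycle
$(1)\Rightarrow (2)\Rightarrow (3)\Rightarrow (4)\Rightarrow (5)\Rightarrow (6)\Rightarrow (1)$,
modelled directly on the Von~Renesse--Sturm--Erbar proof scheme recalled in Theorem~\ref{th-intro-1} but with the Boltzmann entropy replaced by the potential energy $U$ and the heat semigroup replaced by the regular Lagrangian flow of $-\nabla u$. Several implications are already implicit in the literature (\cite{K-O,S-G,GKKO-R,GH-C,AT-W}), so the contribution is to assemble them under minimal regularity hypotheses and add the steps passing through the flow $F_t$.

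For $(1)\Rightarrow(2)$ I would compute the second derivative of $t\mapsto U(\mu_t)=\int u\,\d\mu_t$ along a $W_2$-geodesic $(\mu_t)$ with bounded densities and compact support. Using Gigli's second-order calculus on $\rcd$ spaces, together with the representation of the velocity field as $-\nabla\varphi_t$ for a time-dependent Kantorovich potential, one gets
\[
\tfrac{\d^2}{\dt^2}U(\mu_t)=\int \H_u(\nabla\varphi_t,\nabla\varphi_t)\,\d\mu_t \geq K\int|\D\varphi_t|^2\,\d\mu_t = K\,W_2^2(\mu_0,\mu_1),
\]
which is exactly $K$-displacement convexity. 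Implication $(2)\Rightarrow(3)$ is the standard first-order consequence of convexity: writing the convexity inequality from $\mu^1$ to $\mu^2$ and symmetrically from $\mu^2$ to $\mu^1$, then adding, gives the monotonicity inequality once the right derivative of $U$ along a geodesic is identified with $\int\la\nabla u,\nabla\varphi\ra\,\d\mu^1$. For $(3)\Rightarrow(4)$ I would invoke the formula for the derivative of $W_2^2$ along two solutions of the continuity equation (à la Ambrosio--Gigli--Savaré/Lisini),
\[
\tfrac12\ddt W_2^2(\mu_t^1,\mu_t^2) = -\int\la\nabla u,\nabla\varphi_t\ra\,\d\mu_t^1 - \int\la\nabla u,\nabla\varphi_t^c\ra\,\d\mu_t^2,
\]
and then apply $(3)$ to get $\ddt W_2^2\le -2KW_2^2$, whence Gr\"onwall gives the stated contraction.

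For $(4)\Rightarrow(5)$ I would test the contraction on the mollified initial data $\mu_0^x:=\mathfrak m|_{B_\eps(x)}/\mm(B_\eps(x))$, use Ambrosio--Trevisan's identification $\mu_t^x=(F_t)_\sharp\mu_0^x$, and send $\eps\to 0$: a continuous representative of $F_t$ is forced to exist because the resulting $\mm\otimes\mm$-a.e.\ inequality $\d(F_t(x),F_t(y))\le e^{-Kt}\d(x,y)$ propagates to a genuinely pointwise Lipschitz estimate on $\supp\mm$. For $(5)\Rightarrow(6)$ I would exploit that postcomposition with an $e^{-Kt}$-Lipschitz map contracts local Lipschitz constants by the same factor, and then pass from Lipschitz to minimal weak upper gradient through the standard relaxation procedure available on $\rcd$ spaces. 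Finally $(6)\Rightarrow(1)$ is obtained by squaring, integrating, differentiating at $t=0^+$ the inequality $\int|\D(f\circ F_t)|^2\,\d\mm\le e^{-2Kt}\int|\D f|^2\circ F_t\,\d\mm$, and identifying $\tfrac12\ddt|_{t=0^+}\int|\D f|^2\circ F_t\,\d\mm - \tfrac12\ddt|_{t=0^+}\int|\D(f\circ F_t)|^2\,\d\mm$ with $\int\H_u(\nabla f,\nabla f)\,\d\mm$ via the Bochner-type identity for the drift $-\nabla u$.

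The main obstacle I expect is the step $(4)\Rightarrow(5)$: the contraction in $(4)$ only holds for solutions with bounded density (so delta masses are not directly allowed), and one must show that the approximation procedure both preserves the Lipschitz bound in the limit and yields a \emph{continuous} representative of $F_t$ defined pointwise rather than only $\mm$-a.e. This requires combining the $\mm$-a.e.\ uniqueness of the regular Lagrangian flow from \cite{AT-W} with a localisation argument to convert the a.e.\ Lipschitz estimate into a pointwise one on the support of $\mm$. The implication $(6)\Rightarrow(1)$ is delicate as well, because the differentiation at $t=0$ needs enough regularity of $F_t$ to justify interchanging limits; this is where the ``appropriate regularity'' hypothesis on $u$ in the statement must be made precise (Sobolev regularity sufficient for Ambrosio--Trevisan, plus a Hessian in $L^2_{\rm loc}$).
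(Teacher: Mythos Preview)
Your overall cycle strategy is reasonable, and several of your implications ($(2)\Rightarrow(3)$, $(4)\Rightarrow(5)$, $(5)\Rightarrow(6)$) match the paper's arguments closely. However, two of your proposed steps contain genuine gaps that the paper explicitly avoids.

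\textbf{The step $(1)\Rightarrow(2)$.} Your plan is to compute $\tfrac{\d^2}{\dt^2}U(\mu_t)=\int \H_u(\nabla\varphi_t,\nabla\varphi_t)\,\d\mu_t$ along a Wasserstein geodesic. This second-order differentiation formula is exactly what Gigli--Tamanini \cite{GT-S} establish, but only on $\rcdkn$ spaces with $N<\infty$; it is \emph{not known} in the $\rcd$ setting of the theorem (the paper states this explicitly in the remark following Theorem~\ref{mainth-1}). The paper therefore takes a completely different route: it shows that $\H_u\ge K$ together with the ambient $\be$ condition implies that the weighted space $M^{mu}:=(X,\d,e^{-mu}\mm)$ satisfies ${\rm BE}(k+mK,\infty)$ for every $m\in\N$ (this is Lemma~\ref{lemma:mainth-1}), hence is ${\rm RCD}(k+mK,\infty)$. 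Writing ${\rm Ent}_{e^{-mu}\mm}(\mu_t)={\rm Ent}_{\mm}(\mu_t)+m\int u\,\d\mu_t$, dividing the resulting displacement convexity inequality by $m$ and letting $m\to\infty$ yields weak $K$-convexity of $u$. This weighted-measure trick is the key idea and is not present in your proposal.

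\textbf{The step $(6)\Rightarrow(1)$.} You propose to differentiate the gradient estimate at $t=0$ and identify the result with $\int\H_u(\nabla f,\nabla f)\,\d\mm$ via a Bochner-type identity for the drift $-\nabla u$. The paper does not attempt this, and for good reason: even assuming $f\circ F_t\in W^{1,2}$, one does not know a priori that $t\mapsto |\D(f\circ F_t)|^2$ is differentiable in any useful sense at $t=0$, nor that the formal Bochner identity for a non-gradient semigroup holds at this level of regularity (see Remark~\ref{remark:locallip}). Instead the paper closes the loop without ever proving $(6)\Rightarrow(1)$ directly: it shows $(6)\Rightarrow(4)$ by a Kuwada-type duality argument (estimating $\int\varphi^c\,\d\mu_t^2+\int\varphi\,\d\mu_t^1$ along a Hopf--Lax interpolation and using $(6)$ to control the cross term), and separately $(5)+(6)\Rightarrow(3)$ via a more elaborate interpolation $(F_{tr})_\sharp\theta_{\delta_r}$ with a carefully chosen reparametrisation $\delta_r=(e^{2Krt}-1)/(e^{2Kt}-1)$. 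Then $(3)\Rightarrow(2)$ is proved by integrating the monotonicity inequality along the geodesic, and $(2)\Rightarrow(1)$ again uses the weighted-space argument of Theorem~\ref{mainth-0}.

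In short: your $(1)\Rightarrow(2)$ would only work on $\rcdkn$, and your $(6)\Rightarrow(1)$ lacks the analytic justification; the paper bypasses both by the $e^{-mu}\mm$ reweighting for $(1)\Leftrightarrow(2)$ and by Kuwada duality plus the reparametrised interpolation to return from $(6)$ to $(3)$.
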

 
 \bigskip
 
 We will divide the  proof of the  theorem above into two parts. The first part is {Theorem} \ref{mainth-0}, which deals with the equivalence of $1)$ and $2)$. It has been proved (in e.g. \cite{K-O, GKKO-R, LierlSturm2018}) when  $u$ is a test function (see Section \ref{section-sobolev} for the definition). However, in some potential applications, such as `` splitting theorem" and ``volume cone implies metric cone theorem", the  target functions only have lower differentiability and integrability. In Theorem \ref{mainth-0}, using some recent results on second order differential calculus on metric measure spaces, we can just assume that $u \in W^{2,2}_\loc$,  locally bounded and $u(x) \geq -a-b \d^2(x, x_0)$ for some $a, b \in \R^+, x_0\in X$.    We remark that the regularity of convex functions on metric measure space is still unclear, especially when the space is not locally compact. So  regularity assumptions are still necessary, and we conjecture that the assumption in this article is almost minimal.

 The second part of the proof is {Theorem} \ref{mainth-1}, in which  we prove the equivalence of $2)-6)$. The well-posedness  of  this theorem requires the existence and uniqueness theory of regular Lagrangian flow on metric measure spaces, which is studied by Ambrosio-Trevisan \cite{AT-W}. For potential applications of this theorem, we also extend Ambrosio-Trevisan's result to a lager class of vector fields in Proposition \ref{prop:rlf-2}.

 \bigskip
 
 Consequently, we will see in {Theorem} \ref{mainth-2} that the $K$-monotonicity of a (possibly) non-symmetric vector field ${\bf b}$ can be characterized in similar ways as $ 3), 4), 5), 6)$ in Theorem \ref{theorem-1}.   To our knowledge,  these  equivalent descriptions  are  new even on Riemannian manifolds and Riemannian limit spaces. Due to lack of second order differentiation formula,  and  low regularity of the  vector field, the usual argument in smooth setting fails to work under $\rcd$ condition (see also Remark \ref{remark:locallip}).  
 
\noindent
 {\bf Theorem \ref{mainth-2}}
 {\it Let $M:=\ms$ be a $\rcd$  space, ${\bf b} \in L^2_\loc(TM)$. We assume that  there exits a unique  regular Lagrangian flow associated to $-{\bf b}$,   which is denoted by $(F_t)$. Then the following descriptions are equivalent.

\begin{itemize}
\item [1)] ${\bf b}$ is $K$-monotone.
\item [2)] The exponential contraction in Wasserstein distance
\[
W_2(\mu^1_t, \mu^2_t)\leq e^{-Kt}W_2(\mu^1_0, \mu^2_0), ~~\forall t>0
\]
holds for any two curves  $(\mu_t^1), (\mu_t^2)$ whose velocity fields are $-{\bf b}$.

\item [3)] The regular Lagrangian flow $(F_t) $ associated to   $-{\bf b}$  has a unique continuous representation, so that $F_t(x)$ is well-defined at any $x\in X$. Furthermore, the exponential contraction
\[
\d(F_t(x), F_t(y)) \leq e^{-Kt} \d(x, y)
\]
holds for any $x, y \in X$ and $t>0$.
\item [4)] For any $f\in W^{1,2}\ms$, we have $f\circ F_t \in W^{1,2}$ and
\[
|\D (f\circ F_t) |(x) \leq e^{-Kt}|\D f|\circ F_t(x), ~~\mm-\text{a.e.} ~x\in X.
\]

\end{itemize}
}

\bigskip
At last, we summarize the highlights and main innovations in this paper.
\begin{itemize}
\item [a)]   We improve  some results  concerning  $K$-convex function (cf. \cite{K-O,S-G, LierlSturm2018}), and continuity equation on metric measure spaces (cf. \cite{AT-W, GH-C}).
\item [b)] We  characterize the $K$-convex  functions on $\rcd$ spaces in several equivalent ways.
\item [c)] We propose some new concepts to characterize the monotonicity of non-smooth vector fields, and characterize the  $K$-monotone vector fields  in several equivalent ways.
\item [d)] The characterization theorems improve our understanding of  $K$-convex functions and $K$-monotone vectors on Riemannian manifolds.
\item [e)] We find a  new  approach to study some rigidity theorems on  spaces with lower Ricci curvature bound.
\end{itemize}

\bigskip

The paper is organized as follows.
In section 2 we review some basic results on optimal transport, Sobolev spaces and $L^\infty$-modules on metric measure spaces, and continuity equation on metric measure spaces studied in \cite{AT-W} and \cite{GH-C}. In section 3, we prove our main theorems which characterize the $K$-convex functions and $K$-monotone vector fields on metric measure spaces.   In the last section,  we apply our characterization theorem to prove two  results,  which are key steps in the proofs of ``splitting theorems" and ``from volume cone to metric cone  theorem".

%%%%%%%%%%%%%%%%%%%%%%%%%%%%
% 预备知识
%%%%%%%%%%%%%%%%%%%%%%%%%%%%

\section{Preliminaries} 
\subsection{Metric measure space and optimal transport}
We recall some basic  results concerning analysis on metric spaces and optimal transport theory. More detailed discussions can be found  in   \cite{AG-U, AGS-G} and  \cite{V-O}.
Basic assumptions on the metric measure spaces in this paper are listed below.
\begin{assumption}\label{assumption}
The metric measure space $M:=\ms$ satisfies:
\begin{itemize}
\item [i)] $(X,\d)$ is a complete and separable geodesic  metric space,
\item [ii)]$\supp{\mm}=X$,
\item [iii)]$\mm$ is a  $\d$-Borel  measure  and takes  finite value on bounded sets,
\item [iv)]$\ms$ has exponential volume growth: $\int e^{-\lambda\d^2(x, x_0)}\,\d \mm(x)<\infty$ for some $\lambda>0, x_0 \in X$.
\end{itemize}

\end{assumption}

The local Lipschitz constant $\lip f:X\to[0,\infty]$  of a function $f$ is defined by
\[
\lip f(x):=
\left\{ \begin{array}{ll}
\lims_{y\to x}\frac{|f(y)-f(x)|}{\d(x,y)},~~~~x~~\text{is not isolated}\\
0,~~~~~~~~~~~~~~~~~~~~~~~~\text{otherwise.}
\end{array}\right.
\]

The space of continuous curves on $[0,1]$ with values in $X$ is denoted by ${\rm C}([0,1],X)$ and equipped with the uniform distance.  Its  subspace consisting of constant speed geodesics is denoted by ${\rm Geo}(X)$. For $t\in[0,1]$ we define the ``evaluation map" $e_t:{\rm C}([0,1],X)\mapsto X$ by
\[
e_t(\gamma):=\gamma_t,\qquad\forall \gamma\in {\rm C}([0,1],X).
\]

A curve $\gamma:[0,1]\to X$ is called absolutely continuous if there exists $f\in L^1([0,1])$ such that
\begin{equation}
\label{eq:defac}
\d(\gamma_s,\gamma_t) \leq \int_t^s f(r) \,\d r,\qquad\forall t,s \in [0,1], \ t<s.
\end{equation}
For an absolutely continuous curve $\gamma$,  it can be proved that the limit $\lim_{h\to 0}\frac{\d(\gamma_{t+h},\gamma_t)}{|h|}$ exists for a.e. $t$ and thus defines a function, called metric speed and denoted by $|\dot\gamma_t|$, which is in $L^1([0,1])$. If $|\dot\gamma_t| \in L^2([0,1])$, we say that the curve is 2-absolutely continuous and denote the set of 2-absolutely continuous curves by ${\rm AC}^2([0,1],X)$.

\bigskip

We denote by $\mathcal{P}(X)$  the space of Borel probability measures on $X$, and denote by $\mathcal{P}_2(X)$ the space of probability measures such that $\mu \in \mathcal{P}_2(X)$ if $\mu \in \mathcal{P}(X)$
and $\int \d^2(x,x_0)\, \d\mu(x) < +\infty$ for some  $x_0 \in X$. We equip $\mathcal{P}_2(X)$ with the $L^2$-transportation distance $W_2$ (2-Wasserstein distance) defined by:
\begin{equation}
\label{eq:defw2}
W_2^2(\mu,\nu):=\inf\int \d^2(x,y)\,\d\pi(x,y),
\end{equation}
where the infimum is taken among all $\pi\in\mathcal{P}(X^2)$ with marginals $\mu, \nu$.

The measures which attain the infimum are called optimal transport plans and  denoted by ${\rm Opt}(\mu, \nu)$.
Given $\varphi:X\to\R\cup\{-\infty\}$, which is  not identically $-\infty$,  the $c$-transform  $\varphi^c:X\to\R\cup\{-\infty\}$ is defined by
\[
\varphi^c(y):=\inf_{x\in X}\frac{\d^2(x,y)}2-\varphi(x).
\]
A function $\varphi$ is said to be $c$-concave if it is not identically $-\infty$ and $\varphi=\psi^c$ for some $\psi:X\to\R\cup\{-\infty\}$. 
For $\mu,\nu\in\mathcal{P}_2(X)$,  it is known that $W_2^2(\mu,\nu)$ can be obtained as maximization of the dual problem 
\begin{equation}
\label{eq:dual}
\frac12W_2^2(\mu,\nu)=\sup\int \varphi\,\d\mu+\int\varphi^c\,\d\nu,
\end{equation}
where the supremum is taken among all $c$-concave functions $\varphi$. The supremum can be achieved and any maximizing  $\varphi$ is called Kantorovich potential from $\mu$ to $\nu$. For any Kantorovich potential we have in particular $\varphi\in L^1(\mu)$ and $\varphi^c\in L^1(\nu)$. Equivalently, the supremum in \eqref{eq:dual} can be taken among all $\varphi:X\to \R$ with Lipschitz and bounded.

\bigskip

Absolutely continuous curves in $(\mathcal{P}_2, {W}_2)$  can be characterized  by the following proposition.

\begin{proposition}[Superposition principle, \cite{L-C}]\label{lift}
Let $(X,\d)$ be a complete and separable metric space, and $(\mu_t)_{t \in [0,1]} \in  {\rm AC}^2([0,1], \mathcal{P}_2)$. Then there exists a measure $\Pi \in \mathcal{P}({\rm C}([0,1],X))$ concentrated on
${\rm AC}^2([0,1],X)$ such that:
\begin{eqnarray*}
(e_t)_\sharp \pi &=& \mu_t,~~~~~~\forall t \in [0,1]\\
\int |\dot{\gamma}_t|^2 \,\d \pi(\gamma) &=&|\dot{\mu}_t|^2, ~~~~~a.e.~t.
\end{eqnarray*}
Such a measure $\Pi$  is called a lifting of $(\mu_t)$.
\end{proposition}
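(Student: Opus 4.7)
The plan is to construct $\Pi$ as a weak limit of discrete lifts obtained by gluing consecutive optimal transport plans along a refining sequence of partitions, and then to verify the marginal and energy identities using tightness and lower semicontinuity.

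First I would fix partitions $0=t^n_0<\cdots<t^n_{k_n}=1$ with mesh tending to zero, pick $\pi^n_i\in\opt(\mu_{t^n_i},\mu_{t^n_{i+1}})$, and iteratively apply the gluing lemma on Polish spaces to build $\sigma_n\in\mathcal{P}(X^{k_n+1})$ whose consecutive two-dimensional marginals coincide with $\pi^n_i$. Since $(X,\d)$ is a geodesic Polish space, a Borel section of the endpoint map $\geo(X)\to X^2$ is available by measurable selection, so I can associate to each tuple $(x_0,\dots,x_{k_n})$ a piecewise constant-speed geodesic and push $\sigma_n$ forward to $\Pi^n\in\mathcal{P}(C([0,1],X))$. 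By construction $(e_{t^n_i})_\sharp\Pi^n=\mu_{t^n_i}$, and the kinetic action satisfies
\[
\int\!\int_0^1|\dot\gamma_t|^2\,\d t\,\d\Pi^n(\gamma)=\sum_{i=0}^{k_n-1}\frac{W_2^2(\mu_{t^n_i},\mu_{t^n_{i+1}})}{t^n_{i+1}-t^n_i}\leq\int_0^1|\dot\mu_t|^2\,\d t,
\]
using the standard inequality $W_2^2(\mu_a,\mu_b)\leq(b-a)\int_a^b|\dot\mu_r|^2\,\d r$.

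Next I would prove tightness of $\{\Pi^n\}$ in $\mathcal{P}(C([0,1],X))$. Tightness of each time marginal is inherited from $\{\mu_t\}_{t\in[0,1]}$, which is tight as the continuous image of a compact set into $(\mathcal{P}_2,W_2)$. Equicontinuity in $\Pi^n$-probability follows from Cauchy--Schwarz applied to the action bound:
\[
\int\d^2(\gamma_s,\gamma_t)\,\d\Pi^n(\gamma)\leq(t-s)\int\!\int_s^t|\dot\gamma_r|^2\,\d r\,\d\Pi^n\leq(t-s)\int_0^1|\dot\mu_r|^2\,\d r.
\]
An Arzel\`a--Ascoli-type criterion for measures on $C([0,1],X)$ with Polish target, combining tightness of marginals with this $L^2$-equicontinuity in probability, yields a subsequence $\Pi^n\weakto\Pi$.

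Finally, passing to the limit in the marginal identity along the partition points and using continuity of $t\mapsto\mu_t$ together with weak continuity of $t\mapsto(e_t)_\sharp\Pi$ gives $(e_t)_\sharp\Pi=\mu_t$ for every $t$. Lower semicontinuity of the action $\gamma\mapsto\int_0^1|\dot\gamma_r|^2\,\d r$ on $C([0,1],X)$ under weak convergence forces $\Pi$ to be concentrated on $\ac{2}$ and yields $\int|\dot\gamma_t|^2\,\d\Pi\leq|\dot\mu_t|^2$ for a.e. $t$. The reverse inequality follows from $W_2^2(\mu_s,\mu_t)\leq\int\d^2(\gamma_s,\gamma_t)\,\d\Pi\leq(t-s)\int_s^t\int|\dot\gamma_r|^2\,\d\Pi\,\d r$ upon dividing by $(t-s)^2$ and applying Lebesgue differentiation to both sides. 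The main obstacle is the tightness step: without local compactness of $X$, individual curves in $\supp\Pi^n$ need not be equicontinuous, so tightness must be established in probability through the $L^2$ modulus-of-continuity bound combined with marginal tightness, rather than via the classical pointwise Arzel\`a--Ascoli argument.
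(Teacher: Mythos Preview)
The paper does not prove this proposition; it is quoted as a known result from Lisini \cite{L-C}, so there is no ``paper's proof'' to compare against. Your outline is essentially the standard strategy behind Lisini's theorem: glue optimal plans along a refining partition, interpolate to obtain measures on path space, extract a weak limit via a tightness criterion for measures on $C([0,1],X)$, and recover the energy identity by lower semicontinuity of the action together with the trivial upper bound coming from the marginal constraint.

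There is one genuine gap in your sketch. The proposition, as stated, assumes only that $(X,\d)$ is complete and separable, \emph{not} that it is geodesic. Your construction of $\Pi^n$ relies on a Borel section of the endpoint map $\geo(X)\to X^2$, which presupposes the existence of geodesics between arbitrary pairs of points. In a general Polish metric space this map need not be surjective, and piecewise-geodesic interpolation is unavailable. Lisini's original argument (and the treatment in \cite{AGS-G}) circumvents this either by isometrically embedding $X$ into a separable Banach space via Kuratowski's theorem---where linear interpolation is always available and the action functional behaves well---or by working directly with a suitable class of approximate paths. If you restrict to the setting of Assumption~\ref{assumption} in the present paper, where $(X,\d)$ is in fact geodesic, your argument goes through as written; but for the proposition in the generality stated you would need to insert one of these devices before the interpolation step.
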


\subsection{Sobolev spaces and tangent modules}\label{section-sobolev}

In this article, we adopt the definition of Sobolev space $W^{1,2}(M)$ as in \cite{AGS-C}. We say that $f\in L^2(X, \mm)$ is a Sobolev function in $W^{1,2}(M)$ if there exists a sequence of Lipschitz  functions $\{f_n\} \subset L^2$,  such that $f_n \to f$ and $\lip{f_n} \to G$ in $L^2$ for some $G \in L^2(X, \mm)$. It is known that there exists a minimal function $G$ in $\mm$-a.e. sense. We call this minimal $G$ the minimal weak  upper gradient (or weak gradient for simplicity) of  $f$, and denote it by $|\D f|$.
It is known that  locality holds for weak gradients, i.e. $|\D f|=|\D g|$ $\mm$-a.e. on the set $\{x\in X:f(x)=g(x)\}$. Similarly, we define local Sobolev space $W ^{1,2}_{\rm loc} (M)$ which consists of functions $f\in L^2_\loc$ such that for any open set $\Omega$ with bounded closure,   $f\restr{\Omega} \in W^{1,2}(\Omega)$.

Besides locality, we have the lower semi-continuity: if $(f_n)_n \subset W^{1,2}$ converges to some $f\in L^2$ in $\mm$-a.e. sense and such that $(|\D f_n|)_n$ is bounded in $L^2(X,\mm)$, then $f\in W^{1,2}\ms$ and
\[
|\D f|\leq G,\qquad\mm\text{-a.e.}
\]
for every $L^2$-weak limit $G$ of some subsequence of $(|\D f_n|)_n$.

We equip $W^{1,2}\ms$ with the norm
\[
\|f\|^2_{W^{1,2}\ms}:=\|f\|^2_{L^2(X,\mm)}+\||\D f|\|^2_{L^2(X,\mm)}.
\]
It is known that $W^{1,2}\ms$ is a Banach space, but not necessary a Hilbert space. We say that $\ms$ is an infinitesimally Hilbertian space if $W^{1,2}\ms$ is a Hilbert space.

On an infinitesimally Hilbertian space $M$, we define a pointwise bilinear map $\Gamma(\cdot, \cdot)$ by
 \[
[W^{1,2}(M)]^2 \ni (f, g) \mapsto \Gamma( f,   g):= \frac14 \Big{(}|\D (f+g)|^2-|\D (f-g)|^2\Big{)}.
\]
We have the following Leibniz rule (see Proposition 3.17  \cite{G-O} for a proof):
\[
\Gamma(fg, h)=f\Gamma(g,h)+g\Gamma(f,h)
\]
for any $f, g, h \in W^{1,2}\cap L^\infty$.

\begin{definition}
[Measure valued Laplacian, \cite{G-O, G-N}]
The space ${\rm D}({\bf \Delta}) \subset   L^2_{\rm loc} (M)$ is the set of $f \in  W ^{1,2}_{\rm loc} (M)$ such that there is a Radon measure ${\bf \mu}$ satisfying
\[
\int h \,\d{\bf \mu}= -\int \Gamma( h, f ) \, \d  \mm~~~ \forall h: M \mapsto  \R, ~~ \text{Lipschitz with bounded support}.
\]
In this case the measure $\mu$ is unique and we denote it by ${\bf \Delta} f$. If ${\bf \Delta} f \ll m$, we denote its density by $\Delta f$. 
\end{definition}

\begin{remark}
We do not assume that ${\bf \Delta} f$ has bounded total variation in this paper, so $\Delta f$ is not necessarily $L^1$-integrable, but only locally integrable. 
\end{remark}

If $\Delta f\in L^2$, by definition  we know 
\[
\int \varphi \Delta f\,\d\mm= -\int \Gamma(\varphi, f ) \, \d  \mm
\]
for any $\varphi \in W^{1,2}$.

Let $(f_n)_{n=0}^\infty \subset {\rm D}({\bf \Delta})$.  We say that  $(f_n)$ converges to $f_\infty$ in ${\rm D}({\bf \Delta})$ if $f_n \to f$ in $W^{1,2}$ and $\Delta f_n \to \Delta f_\infty$ in $L^2$.

\bigskip

Next we will characterize the curvature-dimension conditions $\rcd$ and $\rcdkn$ using non-smooth Bakry-\'Emery theory. We firstly recall that a space is  $\rcd$ (or $\rcdkn$) if it is a  ${\rm CD}(K, \infty)$ (or ${\rm CD}^*(K, N)$)  space in the sense of  Lott-Sturm-Villani \cite{Lott-Villani09, S-O1, S-O2} (or Bacher-Sturm  \cite{BS-L}),  equipped with an infinitesimally Hilbertian Sobolev space. For more details, see \cite{AGS-M} and \cite{AGMR-R}.

We define  ${\rm TestF}(M) \subset W^{1,2}(M)$, the set of test functions  as
\[
{\rm TestF}(M):= \Big\{f \in {\rm D} ({\bf \Delta}) \cap L^\infty: f\in W^{1,2}\cap \Lip~~ {\rm and}~~~ \Delta f  \in W^{1,2}\cap L^\infty \Big \}.
\] It is known that ${\rm TestF}(M)$ is dense in $W^{1,2}(M)$ when $M$ is ${\rm RCD}$.

\begin{lemma}\label{lemma:boundedsupport}
We denote  by $ {\rm TestF}_{\rm bs}(M)\subset {\rm TestF}(M)$ the space of test functions with bounded support, then $ {\rm TestF}_{\rm bs}(M)$ is dense in ${\rm TestF}(M)$ with respect to both $W^{1,2}$ and ${\rm D}({\bf \Delta })$ topology.
\end{lemma}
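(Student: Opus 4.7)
The plan is to approximate $f \in {\rm TestF}(M)$ via multiplicative cutoffs $f_n := \chi_n f$ and to show convergence in the ${\rm D}({\bf \Delta})$-topology, which automatically entails $W^{1,2}$-convergence. The critical input is the availability of ``good cutoff functions'' on $\rcd$ spaces: fixing a reference point $x_0 \in X$, for each $n$ one can find $\chi_n \in {\rm TestF}(M)$ with $\supp(\chi_n) \subset B_{2n}(x_0)$, $0\leq \chi_n\leq 1$, $\chi_n \equiv 1$ on $B_n(x_0)$, and uniform bounds $\||\D\chi_n|\|_\infty + \|\Delta\chi_n\|_\infty \leq C$ together with $\Delta\chi_n\in W^{1,2}$. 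Such cutoffs are produced by heat-flow regularizations of suitable truncations of $\d(\cdot, x_0)$, following Ambrosio--Mondino--Savar\'e and Mondino--Naber.

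Given such $\chi_n$, the product $f_n := \chi_n f$ clearly has bounded support and is Lipschitz and bounded. By the Leibniz rule for the Laplacian,
\[
\Delta f_n = \chi_n \Delta f + f\,\Delta\chi_n + 2\,\Gamma(\chi_n, f),
\]
and each summand belongs to $W^{1,2} \cap L^\infty$: the terms $\chi_n\Delta f$ and $f\Delta\chi_n$ are products of two functions in $W^{1,2}\cap L^\infty$, while $\Gamma(\chi_n,f) \in W^{1,2}\cap L^\infty$ by the standard test-function calculus in $\rcd$ spaces (Savar\'e, Gigli), which yields $\Gamma(u,v)\in W^{1,2}$ whenever $u,v\in {\rm TestF}(M)$. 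Hence $f_n \in {\rm TestF}_{\rm bs}(M)$.

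Convergence will follow from dominated convergence once the errors are localized on the annular region $X\setminus B_n(x_0)$. Clearly $f_n\to f$ pointwise and in $L^2$, since $|f_n|\leq |f|\in L^2$. Expanding via the module-valued Leibniz rule,
\[
|\D(f_n-f)|^2 = (1-\chi_n)^2|\D f|^2 + 2(\chi_n-1)f\,\Gamma(\chi_n, f) + f^2|\D\chi_n|^2,
\]
and since $|\D\chi_n|$ vanishes on $B_n(x_0)$ and is bounded by $C$ elsewhere, every term is pointwise dominated by $C'(|f|^2+|\D f|^2)\mathbf{1}_{X\setminus B_n(x_0)}$, which tends to $0$ in $L^1$; thus $f_n\to f$ in $W^{1,2}$. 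For the Laplacian, $\chi_n\Delta f\to\Delta f$ in $L^2$, while the remainder $f\Delta\chi_n + 2\Gamma(\chi_n,f)$ is pointwise dominated by $C''(|f|+|\D f|)\mathbf{1}_{X\setminus B_n(x_0)}\in L^2$ and vanishes almost everywhere, so it tends to $0$ in $L^2$. This gives $\Delta f_n\to\Delta f$ in $L^2$, i.e.\ $f_n\to f$ in ${\rm D}({\bf \Delta})$.

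The principal difficulty is securing cutoffs which simultaneously satisfy $\chi_n\in\Lip\cap L^\infty$ and $\Delta\chi_n\in L^\infty\cap W^{1,2}$ with uniform bounds; this is exactly where the $\rcd$ structure is invoked nontrivially, since purely metric cutoffs (such as truncations of $\d(\cdot,x_0)$) would not lie in ${\rm TestF}(M)$ and the Leibniz expansion above would break down.
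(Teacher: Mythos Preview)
Your argument is correct and follows essentially the same route as the paper: multiply by good $\rcd$ cutoffs $\chi_n$, expand $\Delta(\chi_n f)$ via the Leibniz rule, and use dominated convergence on the annulus $X\setminus B_n(x_0)$. The paper's version is terser (it does not spell out why $\Delta f_n\in W^{1,2}\cap L^\infty$ or the dominated-convergence details) and uses the slightly stronger property $\Lip(\chi_n)\leq 1/n$ from the cited cutoff construction, but this extra decay is not actually needed and your uniform bound suffices.
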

\begin{proof}
Let $\chi_n \in {\rm TestF}$, $n\in \N$ be cut-off functions (cf. Lemma 6.7, \cite{AMS-O}) such that
\begin{itemize}
\item [a)] $0\leq \chi_n \leq 1$, $\chi_n$ supports on $B_{3n}(x_0)$ and $\chi_n=1$ on $B_{n}(x_0)$,
\item [b)] $\Lip (\chi_n) \leq \frac 1n$,
\item [c)] $\Delta \chi_n \in L^\infty$ uniformly in $n$ and $|\D \chi_n|^2 \in W^{1,2}$.
\end{itemize}

For any $f\in {\rm TestF}$ we define $f_n:= \chi_n f$. Then we have $\nabla f_n=f\nabla \chi_n +\chi_n \nabla f$, $\Delta f_n=
f\Delta \chi_n +\chi_n \Delta f+2\la \nabla f, \nabla \chi_n \ra$. Hence  we know $f_n \in {\rm TestF}_{\rm bs}$,  $f_n \to f$ in $W^{1,2}$ and
$\Delta f_n \to \Delta f$ in $L^2$.  So  $ {\rm TestF}_{\rm bs}$ is dense in  ${\rm TestF}$ with respect to both $W^{1,2}$ and ${\rm D}({\bf \Delta })$ topology.
\end{proof}

\bigskip

 Let $f,g \in {\rm TestF}(M)$. We know (from \cite{S-S}) that $\Gamma(f,g) \in {\rm D}({\bf \Delta})$, so we can define  the measure ${\bf \Gamma}_2(f,g)$  by
\[
{\bf \Gamma}_2(f,g)=\frac12 {\bf \Delta}\Gamma( f,  g) -\frac12 \big{(}\Gamma( f,  \Delta g)+\Gamma( g,  \Delta f)\big{)}\, \mm,
\]
and we put ${\bf \Gamma}_2(f):={\bf \Gamma}_2(f,f)$. Then we have the following Bochner inequality on metric measure space, which can be regarded as a variant definition of $\rcd$ and $\rcdkn$ conditions.

Firstly we recall the Sobolev-to-Lipschitz property, which is an important  prerequisite in   Bakry-\'Emery theory,  see \cite{AGS-B} and \cite{GH-S}  for more discussion about this property.

\begin{definition}[Sobolev-to-Lipschitz property]\label{def:stl}
We say that   a metric measure space $(X ,\d, \mm)$ has  Sobolev-to-Lipschitz property if for any function $f\in W^{1,2}(X)$  with $|\D f| \in L^\infty(X)$, we can find a Lipschitz function  $\tilde{f}$ such that $f=\tilde{f}$ $\mm$-a.e. and  $\Lip (\tilde f)=\esup ~{|\D f|}$.
\end{definition}

\begin{proposition} [Bakry-\'Emery condition, \cite{AGS-B, AGS-M} and \cite{EKS-O}]\label{becondition}
Let $M=\ms$ be a $\rcdkn$ space with $k \in \R$ and $N \in [1, \infty]$. Then 
\[
{\bf \Gamma}_2(f) \geq \Big {(} k \Gamma( f)+ \frac1N (\Delta f)^2 \Big{)}\,\mm
\]
for any $f \in {\rm TestF}(M)$.

Conversely, let $M=\ms$ be  an infinitesimally Hilbertian space satisfying Sobolev-to-Lipschitz property, fulfil the Assumption \ref{assumption}. Then it is a $\rcdkn$ space with $k \in \R$ and $N \in [1, \infty]$ if
\[
\frac12 \int |\D f|^2 \Delta \varphi\,\d\mm-\int \la \nabla f, \nabla \Delta f\ra \varphi\,\d\mm \geq k\int |\D f|^2\varphi\,\d\mm+\frac1N \int (\Delta f)^2\varphi\,\d\mm
\]
for any $\varphi \in {\rm D}_{L^\infty}(\Delta)$ and $f \in {\rm D}_{W^{1,2}}(\Delta)$, where
$$
{\rm D}_{L^\infty}(\Delta):=\Big \{ \varphi: \Delta \varphi \in L^2 \cap L^\infty, \varphi\in W^{1,2}\cap L^\infty \Big \},
$$
and
$$
{\rm D}_{W^{1,2}}(\Delta):=\Big \{ \varphi: \varphi \in W^{1,2},  \Delta\varphi\in W^{1,2} \Big \}.
$$
\end{proposition}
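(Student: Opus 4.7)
The plan is to close the usual circular chain of equivalences linking the Bochner inequality to the curvature-dimension condition through two intermediate objects: pointwise gradient estimates for the heat semigroup $\mathcal H_t$, and the EVI$_{k,N}$ formulation of $\mathcal H_t$ as the gradient flow of a dimensional entropy. The forward direction is the easier half, once one has the identification of heat flow as a gradient flow, while the converse requires lifting an integrated $L^1$-type inequality back up to displacement convexity.

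\textbf{Forward direction ($\rcdkn \Rightarrow$ pointwise Bochner).} By definition of $\rcdkn$, a suitable dimensional entropy is displacement $(k,N)$-convex along $W_2$-geodesics, and combined with the infinitesimally Hilbertian assumption this upgrades to the statement that $\mathcal H_t$ is the unique EVI$_{k,N}$-gradient flow of that entropy; this is the Ambrosio-Gigli-Savar\'e / Erbar-Kuwada-Sturm identification. The EVI property then translates, in the standard way, into the dimensional Bakry-Ledoux estimate
\[
|\D \mathcal H_t f|^2 + \frac{2}{N}\int_0^t e^{-2k(t-s)}(\Delta \mathcal H_s f)^2\,\d s \leq e^{-2kt}\,\mathcal H_t(|\D f|^2),\qquad f\in{\rm TestF}(M).
\]
Testing this inequality against a non-negative $\varphi\in{\rm TestF}_{\rm bs}$, dividing by $t$ and letting $t\downarrow 0$ gives the weak (integrated) Bochner inequality. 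The passage to the pointwise measure-valued statement ${\bf \Gamma}_2(f)\geq\bigl(k\,\Gamma(f)+\frac1N(\Delta f)^2\bigr)\mm$ then follows from Savar\'e's self-improvement trick: substituting $\Phi(f)$ for suitable smooth $\Phi$, exploiting the chain rule for $\Gamma_2$, and optimising over $\Phi$ removes the restriction that the test function is a Laplacian and localises the bound.

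\textbf{Converse direction.} Assume the integrated Bochner inequality on the stated domains. Fix $f\in{\rm TestF}$ and $0\leq\varphi\in{\rm TestF}_{\rm bs}$, and consider
\[
G(s):=\int \mathcal H_{t-s}\varphi\cdot |\D \mathcal H_s f|^2\,\d\mm,\qquad s\in[0,t].
\]
A direct computation of $G'(s)$, combined with the integrated assumption applied to $\mathcal H_s f$ against $\mathcal H_{t-s}\varphi$, yields after Gr\"onwall the pointwise gradient estimate
\[
|\D \mathcal H_t f|^2 \leq e^{-2kt}\,\mathcal H_t(|\D f|^2),
\]
together with its dimensional refinement. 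Kuwada's duality then converts this into $W_2$-contraction of heat kernels with the appropriate dimensional correction; here the Sobolev-to-Lipschitz property is exactly what allows one to pass from $\mm$-a.e.\ bounds to the pointwise metric conclusions. Finally, following Ambrosio-Gigli-Savar\'e and Erbar-Kuwada-Sturm, dimensional $W_2$-contraction of $\mathcal H_t$ is equivalent to the EVI$_{k,N}$ property of $\mathcal H_t$, and hence (by Daneri-Savar\'e's argument) to displacement $(k,N)$-convexity of the dimensional entropy, which is the definition of $\rcdkn$.

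\textbf{Main obstacle.} The delicate part is bookkeeping the dimensional term $\frac{1}{N}(\Delta f)^2$ throughout the chain: one must consistently replace the classical $\Gamma_2$-machinery by its dimensional analogue, and the Boltzmann entropy by the R\'enyi-type entropies of Erbar-Kuwada-Sturm, making sure no dimensional information is lost at the duality or self-improvement step. A secondary but technical obstacle is regularity matching: the hypothesis uses $f\in{\rm D}_{W^{1,2}}(\Delta)$ and $\varphi\in{\rm D}_{L^\infty}(\Delta)$, while the Gr\"onwall-type arguments naturally want $f,\varphi\in{\rm TestF}_{\rm bs}$; Lemma \ref{lemma:boundedsupport} together with heat-semigroup mollification of both $f$ and $\varphi$ bridges these classes.
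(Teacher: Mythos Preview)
The paper does not give its own proof of this proposition: it is stated with attribution to \cite{AGS-B, AGS-M} and \cite{EKS-O} and used as a black box. Your sketch correctly reconstructs the circular chain of implications from those references (EVI $\Leftrightarrow$ gradient estimate $\Leftrightarrow$ weak Bochner, with Kuwada duality and the Sobolev-to-Lipschitz property closing the loop), so there is nothing to compare against and no gap to flag.
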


\bigskip

Next, we will review the concept ``(co)tangent vector field" in non-smooth setting. Detailed definition and basic properties of  $L^\infty$-module can be found in \cite{G-N}. 

The cotangent module of $M$,  which is a $L^2$-normed $L^\infty$-module denoted by $(L^2(T^*M), \| \cdot\|, |\cdot|)$. For any $f\in W^{1,2}(M)$, there exists $\d f \in L^2(T^*M)$ such that $|\d f|=|\D f|$ and $\| \d f\|=\| |\D f| \|_{L^2}$.

We  define the tangent module $L^0(TM)$ as  $\mathrm{Hom}_{L^\infty(M)}(L^2(T^*M), L^0(M))$, i.e. $T \in L^0(T M)$ if it is a  linear map from $L^2(T^*M)$  to $L^0(M)$ as Banach spaces with the $L^\infty$-homogeneity:
\[
T(f v)=f  T(v), ~~\forall v \in L^2(T^*M),~~ f \in L^\infty(M),
\]
and continuity:
\[
T(v)\leq G |v|~~\mm-\text{a.e.},~~~\forall~v\in L^2(T^*M)
\]
for some $G\in L^0$.  The smallest function $G$ satisfying this property will be denoted by $|T|$.
For example,  for any $f\in W^{1,2}_\loc(M)$,  there exists an element in $L^0(TM)$ denote by $\nabla f$, such that $\nabla f(\d g)= \Gamma(f, g)\leq |\D f||\D g| $ for any $g\in W^{1,2}$. So $|\nabla f|=|\D f|\in L^2_\loc$.

We define $L^2(TM)$ as the space consisting of vectors $T\in  L^0(TM)$ such that  $|T| \in L^2(M)$.  It can be seen that $L^2(TM)$ has a natural $L^2$-normed $L^\infty(M)$-module structure, and it  is isometric to $L^2(T^*M)$ both as a module and  a Hilbert space. We denote the corresponding element of $\d f$ in $L^2(TM)$ by $\nabla f$ and call it the gradient of $f$. The natural pointwise norm on $L^2(TM)$ (we also denote it by $| \cdot |$) satisfies $|\nabla f|=|\d f|=|\D f|$. It is also known that  $\{\sum_{i \in I} a_i  \nabla f_i: |I|<\infty,  a_i \in L^\infty(M), f_i \in W^{1,2} \}$ is dense in $L^2(T M)$. Alternatively,  we  define a pointwise inner product $\la \cdot, \cdot \ra: [L^2(T^*M)]^2 \mapsto L^1(M)$ by
 \[
 \la \d f, \d  g \ra:=\Gamma(f,g)= \frac14 \Big{(}|\D (f+g)|^2-|\D (f-g)|^2\Big{)}.
\]  Then we can define the gradient $\nabla g$ as the unique element in $L^2(TM)$ such that $\nabla g (\d f):=\la \d f, \d g \ra$,  $\mm$-a.e. for every $f \in W^{1,2}(M)$. Therefore, $L^2(TM)$ inherits a pointwise inner product from $L^2(T^*M)$ and we still use $\la \cdot, \cdot \ra$ to denote it.  We  define  $L^2_\loc(TM)$ as  $\{{\bf b} \in L^0(TM): |{\bf b}|\in L^2_\loc(M)\}$.  It can be seen  that  $L^2_\loc(TM)$ inherits a pointwise inner  product from $L^2(T M)$.

\bigskip

Next we review the definitions and basic properties of the Sobolev spaces $W^{2,2}(M)$ and $W^{1,2}_C(TM)$. 
It is proved in Lemma 3.2 of \cite{S-S} that $\la \nabla f, \nabla g \ra \in {\rm D}({\bf \Delta})$ for any $f, g \in {\rm TestF}(M)$. Therefore we can define the Hessian of $f \in {\rm TestF}(M)$, which is a bilinear map  $\H_f: \{ \nabla g: g \in {\rm TestF}(M)\}^2 \mapsto L^0(M)$   by
\begin{equation}\label{eq:hessian}
2\H_f(\nabla g,\nabla h)=\la \nabla g, \nabla \la \nabla f, \nabla h \ra \ra +\la \nabla h, \nabla \la \nabla f, \nabla g \ra \ra-\la \nabla f, \nabla \la \nabla g, \nabla h \ra \ra
\end{equation}
 for any $g, h \in {\rm TestF}(M)$. It is known that $\H_f(\cdot, \cdot)$ can be extended to a continuous symmetric $L^\infty(M)$-bilinear map on $[L^2(TM)]^2$  with values in $L^0(M)$.

\begin{definition}
[Distributional divergence, \cite{AT-W, G-N}]
The domain of divergence ${\rm D}({\rm div}) \subset  L^2_\loc(TM)$ is the space  consisting of  $X \in  L ^{2}_\loc (TM)$ for which  there exists a function $f\in L_{\rm loc}^2(X, \mm)$ such that
\[
\int fg \,\d \mm= -\int \la X, \nabla g \ra \,  \d \mm, ~~~~~\forall g ~~~ \text{Lipschitz with bounded support}.
\]
In this case, we call (the unique)  $f$ the divergence of $X$ and denote it by ${\rm div}X$.
\end{definition}
It can be seen  (cf. section 2.3.3 in \cite{G-N}) that $ {\rm div}( \varphi X):=\la \nabla \varphi, X \ra+f {\rm div} X$ for $ \varphi \in \Lip (M)\cap L^\infty$ and $X\in  {\rm D}({\rm div})$.

\bigskip

We denote the pointwise scalar product of two tensors $X, Y \in L^2(TM) \otimes L^2(TM)$ by $X:Y$, and denote by $|X|_{\rm HS}^2:=\sqrt{X:X}$  the Hilbert-Schmidt norm of $X$. 

\begin{definition}[Sobolev space $W^{1,2}_{C,\loc}(TM)$]
The Sobolev space $W^{1,2}_{C, \loc}(TM)$ is the space consisting $X \in L^2_\loc(TM)$ for which there exists a $T\in  L^2_\loc(TM) \otimes L^2_\loc(TM)$ such that 
\[
\int hT: (\nabla g_1 \otimes \nabla g_2)\,\d \mm=-\int \la X, \nabla g_2 \ra {\rm div}(h \nabla g_1)-h\H_{g_2}(X, \nabla g_1)\,\d \mm
\]  for any $g_1, g_2, h\in {\rm TestF}(M)$.
In this case we call $T$ the covariant derivative of $X$ and denote it by $\nabla X$. We endow $W^{1,2}_{C, \loc}(TM)$ with the (extended) norm $\| \cdot \|_{W^{1,2}_C(TM)}$ defined by
\[
\| X \|^2_{W^{1,2}_C(TM)}:=\| X\|^2_{L^2(TM)} +\|| \nabla X|_{\rm HS}\|^2_{L^2(M)}.
\]
We define $W^{1,2}_C(TM)$ as the Banach space consisting of   $X\in W^{1,2}_{C, \loc}(TM)$ with  finite norm.

\end{definition}
\bigskip

We recall that the set of test vector fields ${\rm TestV}(M) \subset L^2(TM)$  is defined  as
\[
{\rm TestV}(M):=\Big \{\mathop{\sum}_{i=1}^n  g_i   \nabla  f_i: n\in \mathbb{N}, f_i, g_i \in  {\rm TestF}(M), i= 1, ... , n \Big \}.
\]

When $M$ is ${\rm RCD}$, it can be proved (cf. \cite{G-N}) that ${\rm TestV}(M)$ is dense in $L^2(TM)$ and ${\rm TestV}(M) \subset W^{1,2}_C(TM)$. In particular,  for any $f \in {\rm TestF}(M)$ we have
$\nabla f \in W^{1,2}_C(TM)$ and $(\nabla \nabla f)^b=\H_f$ where $L^2(TM)\otimes L^2(TM) \ni X\mapsto X^b \in L^2(T^*M)\otimes L^2(T^*M)$ is the usual isomorphism.

We define $W^{2,2}_\loc(M)$ as the space of  functions $f\in W^{1,2}_\loc(M)$ with $\nabla f \in W^{1,2}_{C, \loc}(TM)$, equipped with the (extended) norm  
\[
\| f \|^2_{W^{2,2}(M)}:=\| |\D f|\|^2_{L^2(M)} +\|| \nabla\nabla f|_{\rm HS}\|^2_{L^2(M)}.
\]
We define $W^{2,2}(M)$ as the subspace of $W^{2,2}_\loc(M)$ consisting of vectors with finite norm.
We call $(\nabla \nabla f)^b$  the Hessian of $f$ and denote it by $\H_f$. It can be seen that this notation is compatible with \eqref{eq:hessian} when $f\in {\rm TestF}$.
We define $H^{2,2}(M) \subset W^{2,2}(M)$ as the $W^{2,2}$-
closure of ${\rm TestF}(M)$.

\begin{definition}[Sobolev space $H^{1,2}_C(TM)$]
We define the  Sobolev space $H^{1,2}_C(TM)\subset W^{1,2}_C(TM)$ as the $W^{1,2}_C(TM)$-closure of ${\rm TestV}(M)$.
\end{definition}

\bigskip

We  have the following proposition concerning  $H^{1,2}_C(TM)$ vectors, which extends the result in \cite{S-S}.

\begin{proposition}[Proposition 3.4.6, \cite{G-N}]\label{prop:close}
Let $X \in H^{1,2}_C(TM)$. Then $\la X, Y \ra \in W^{1,2}(M)$ for any  $Y \in W^{1,2}_C(TM)$.  In particular,  
\[
\nabla Y:( \nabla g \otimes \nabla h)=\la \nabla g, \nabla \la Y, \nabla h \ra \ra -\H_h(Y, \nabla g)
\]
for any $h \in {\rm TestF}(M)$. Similar property also holds for $Y \in W^{1,2}_{C, \loc}(TM)$.
 
\end{proposition}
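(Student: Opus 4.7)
The proposition has two parts: the pointwise identity and the $W^{1,2}$-regularity of $\la X, Y\ra$. The plan is to prove them in order: first the identity for $Y \in W^{1,2}_C(TM)$ paired against $\nabla h$ for a single test $h$, and afterwards the membership $\la X, Y\ra \in W^{1,2}(M)$ for arbitrary $X \in H^{1,2}_C(TM)$ by a density argument. The key algebraic cornerstone in the background is the symmetrization of the polarization formula \eqref{eq:hessian}, which gives $\la \nabla g, \nabla \la \nabla f, \nabla h\ra\ra = \H_f(\nabla g, \nabla h) + \H_h(\nabla g, \nabla f)$ for $f, g, h \in {\rm TestF}(M)$ (three of the six resulting terms cancel against the other three); this is the smooth-setting Leibniz rule $\nabla\la\cdot,\cdot\ra = \la\nabla\cdot,\cdot\ra + \la\cdot,\nabla\cdot\ra$ in the test class.

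For the identity, my candidate for the weak gradient of $\la Y, \nabla h\ra$ is the vector field $v$ defined pointwise by $\la v, \nabla g\ra := \nabla Y:(\nabla g \otimes \nabla h) + \H_h(Y, \nabla g)$. Since $h$ is test, $|\nabla h|$ and $|\H_h|_{\rm HS}$ lie in $L^\infty(M)$, while $|Y|, |\nabla Y|_{\rm HS} \in L^2(M)$, so by Cauchy--Schwarz $|v| \leq |\nabla Y|_{\rm HS}\, |\nabla h| + |\H_h|_{\rm HS}\, |Y| \in L^2(M)$. Rewriting the defining relation of $W^{1,2}_C(TM)$,
\[
\int h' \nabla Y : (\nabla g \otimes \nabla h)\,\d\mm = -\int \la Y, \nabla h\ra \,{\rm div}(h' \nabla g)\,\d\mm - \int h'\, \H_{h}(Y, \nabla g)\,\d\mm,
\]
as $\int h' \la v, \nabla g\ra\,\d\mm = -\int \la Y, \nabla h\ra\,{\rm div}(h'\nabla g)\,\d\mm$ for all test $h', g$, and then using density of linear combinations $\sum_i h_i' \nabla g_i$ in $L^2(TM)$ (which holds in $\rcd$), I would promote this to $\int \la W, v\ra\,\d\mm = -\int \la Y, \nabla h\ra\,{\rm div}(W)\,\d\mm$ for every $W \in {\rm TestV}(M)$. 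This distributional identity together with $v \in L^2(TM)$ forces $\la Y, \nabla h\ra \in W^{1,2}(M)$ with weak gradient $v$, which is precisely the claimed formula.

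For the $W^{1,2}$-regularity of $\la X, Y\ra$, I would pick $X_n \in {\rm TestV}(M)$ with $X_n \to X$ in $W^{1,2}_C(TM)$, which is possible by the definition of $H^{1,2}_C(TM)$. Writing $X_n = \sum_i g_{i,n}\nabla f_{i,n}$, the previous step gives $\la Y, \nabla f_{i,n}\ra \in W^{1,2}(M)$, so $\la X_n, Y\ra = \sum_i g_{i,n}\la Y, \nabla f_{i,n}\ra \in W^{1,2}(M)$ by Leibniz. Using the pointwise identity already established to compute $\nabla \la X_n, Y\ra$ and the Cauchy--Schwarz-type bound $|\nabla \la X_n, Y\ra| \leq |\nabla X_n|_{\rm HS}|Y| + |X_n||\nabla Y|_{\rm HS}$, the $W^{1,2}_C(TM)$-convergence of $X_n$ makes $\la X_n, Y\ra$ Cauchy in $W^{1,2}(M)$; lower semi-continuity of the weak gradient concludes. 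The local version for $Y \in W^{1,2}_{C,\loc}(TM)$ follows by multiplying against the bounded-support cut-offs $\chi_n$ from Lemma \ref{lemma:boundedsupport} and combining with the standard Leibniz rule for $\nabla(\chi_n Y)$.

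I expect the hard step to be the bootstrapping inside the first part: promoting the bi-test identity against $h'\nabla g$ into a genuine $W^{1,2}$-statement requires both density of ${\rm TestV}(M)$ in $L^2(TM)$ and a careful separation of $L^\infty$-factors (from testness of $h$) against $L^2$-factors (from $Y$ and $\nabla Y$) via Cauchy--Schwarz. Once $\la Y, \nabla h\ra$ is known to be in $W^{1,2}(M)$, everything else is bookkeeping: the pointwise formula drops out of the defining identity of $W^{1,2}_C(TM)$ after a single integration by parts, and the extension from $\nabla h$ to an arbitrary $X \in H^{1,2}_C(TM)$ is a standard density argument with uniform $L^2$-controls.
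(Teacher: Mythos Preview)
The paper does not prove this proposition; it is quoted from Gigli's memoir \cite{G-N} without argument, so there is no in-paper proof to compare against. I therefore evaluate your argument on its own terms.

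There is a genuine gap in your integrability accounting. You assert that for $h \in {\rm TestF}(M)$ the Hilbert--Schmidt norm $|\H_h|_{\rm HS}$ lies in $L^\infty(M)$, but this is not true on a general $\rcd$ space: the only a-priori control available is the $L^2$ bound of Proposition~\ref{coro:hessianbound}. Testness gives $\Delta h \in L^\infty$, yet Bochner's inequality (Proposition~\ref{prop:bochnerimprove}) is a measure inequality and does not convert this into a pointwise Hessian bound. Consequently your candidate gradient $v$ satisfies only
\[
|v| \leq |\nabla Y|_{\rm HS}\,|\nabla h| + |\H_h|_{\rm HS}\,|Y|\ \in\ L^2 + L^1,
\]
so $v \in L^1(TM)$ but not $L^2(TM)$ without further hypotheses. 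The same defect recurs in your second step: the Cauchy estimate for $|\nabla \la X_n - X_m, Y\ra|$ in $L^2$ would need to pair $|\nabla(X_n - X_m)|_{\rm HS} \in L^2$ against $|Y| \in L^2$, which again lands only in $L^1$.

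What your argument \emph{does} establish is that the defining identity of $W^{1,2}_C(TM)$ rearranges to show $\la Y, \nabla h\ra$ has a distributional gradient equal to $v$, and $v \in L^1(TM)$; the density step then goes through with $L^1$-type norms on the gradient. The natural conclusion from the Leibniz bound $|\nabla\la X, Y\ra| \leq |\nabla X|_{\rm HS}|Y| + |X||\nabla Y|_{\rm HS}$ (a product of two $L^2$ functions) is membership in the $p=1$ Sobolev class, and this is in fact how the general result is stated in \cite{G-N}; the pointwise formula then holds $\mm$-a.e. and is enough for the paper's sole application in Lemma~\ref{lemma:mainth-1}, where everything is integrated against a bounded $\varphi$ with bounded support. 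To reach $W^{1,2}(M)$ as literally written you would need an additional hypothesis such as $|X| \in L^\infty$ or $|Y| \in L^\infty$; you should consult the exact formulation in \cite{G-N} before committing to the $W^{1,2}$ conclusion.
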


We  define the symmetric part of  $\nabla X$ by
\[
\nabla^s X: (\nabla f \otimes \nabla g):=\frac12 \Big ( \nabla X: (\nabla f \otimes \nabla g)+\nabla X: (\nabla g \otimes \nabla f) \Big )
\]
for any $f, g \in {\rm TestF}(M)$. 
In particular, for $X \in W^{1,2}_C(TM)$ we know 
\[
\nabla^s X: (\nabla f \otimes \nabla f)=\la \nabla f, \nabla \la X, \nabla f \ra \ra -\frac12\la X, \nabla  |\D f|^2 \ra
\]
for any $f, g \in {\rm TestF}(M)$.

\bigskip

We have the following improved Bochner inequality, a more refined version for $\rcdkn$ space can be found in \cite{H-R}.

\begin{proposition}[Improved Bochner inequality, \cite{G-N}] \label{prop:bochnerimprove}
Let $M=\ms$ be  a $\rcd$ space. Then for any $f \in {\rm TestF}(M)$ we have
\[
{\bf \Gamma}_2(f) \geq \Big {(} k |\D f|^2+ |\H_f|^2_{\rm HS} \Big{)}\,\mm,
\]
where $|\H_f|_{\rm HS}$ is the Hilbert-Schmidt norm of the Hessian (as a bi-linear map). In case $M$ is $\rcdkn$,  $|\H_f|_{\rm HS}$ can be computed by local coordinate (see Proposition \ref{prop:finitedim} below).
\end{proposition}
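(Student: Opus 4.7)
The plan is to use Bakry-\'Emery self-improvement, starting from the standard $\rcd$ Bochner inequality $\boldsymbol{\Gamma}_2(f)\ge k|\D f|^2\,\mm$ given by Proposition \ref{becondition} with $N=\infty$. The key is to test this already-available inequality against suitable nonlinear combinations of test functions and to extract the Hessian through its defining identity \eqref{eq:hessian}.

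First, I would apply Proposition \ref{becondition} not to $f$ alone but to composite functions of the form $\phi(f,g_1,\dots,g_n)$, with $g_i\in{\rm TestF}(M)$ and $\phi$ smooth so that $\phi(f,\vec g)$ still falls within the class where ${\bf \Gamma}_2$ is defined. Expanding with the diffusion chain rule
$$\Delta\phi(\vec u)=\sum_i \phi_i(\vec u)\Delta u_i+\sum_{i,j}\phi_{ij}(\vec u)\la\nabla u_i,\nabla u_j\ra,$$
combined with the Leibniz rule for $\Gamma$, the inequality ${\bf \Gamma}_2(\phi(f,\vec g))\ge k|\D\phi(f,\vec g)|^2\,\mm$ becomes a polynomial identity in the coefficients of $\phi$. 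Choosing $\phi$ linear in the first argument and polynomial in $\vec g$ (the typical self-improvement choice, e.g.\ $\phi(t,\vec s)=t+\tfrac12\sum_i\lambda_i s_i^2$ or products thereof), the cross terms are exactly those that, after integrating against a nonnegative cut-off $\varphi\in{\rm TestF}_{\rm bs}(M)$ (available by Lemma \ref{lemma:boundedsupport}) and invoking \eqref{eq:hessian}, reorganise into pointwise contributions involving $\H_f(\nabla g_i,\nabla g_j)$.

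Second, I would recast the resulting family of inequalities, written in vectorial form, as the measure bound
$$
\boldsymbol{\Gamma}_2(f)-k|\D f|^2\,\mm\;\ge\;\bigl(2\,\H_f{:}V-|V|_{\rm HS}^2\bigr)\,\mm
$$
valid for every simple symmetric 2-tensor $V=\sum_i g_i\,\nabla h_i\otimes\nabla h_i$ with $g_i,h_i\in{\rm TestF}(M)$. Since on an $\rcd$ space ${\rm TestV}(M)$ is dense in $L^2(TM)$, the corresponding simple symmetric tensors are dense in the symmetric part of $L^2(TM)\otimes L^2(TM)$; hence, taking the $L^\infty(M)$-supremum over such $V$ yields the pointwise identity
$$
\operatorname*{ess\,sup}_V\bigl(2\H_f{:}V-|V|_{\rm HS}^2\bigr)=|\H_f|_{\rm HS}^2\qquad\mm\text{-a.e.},
$$
which combined with the previous display gives the claimed improvement. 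The $\rcdkn$ refinement starts instead from the sharp $\bekn$ inequality of Proposition \ref{becondition}, producing an extra $\frac1N(\Delta f)^2$-term in the lower bound whose reconciliation with the Hilbert--Schmidt norm in local coordinates is handled by Proposition \ref{prop:finitedim}.

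The main obstacle lies in the supremum step: ${\bf \Gamma}_2(f)$ is a signed (possibly singular) Radon measure, whereas $|\H_f|_{\rm HS}^2\,\mm$ is absolutely continuous, so one cannot simply take the supremum on the right side of a measure inequality. One must reduce the supremum to a countable dense subfamily of test tensors $V$, exploit the $L^\infty$-module continuity of $V\mapsto 2\H_f{:}V-|V|_{\rm HS}^2$ on $L^2(TM)\otimes L^2(TM)$, and then lift the resulting countable supremum to the essential supremum via a localisation/exhaustion argument that is compatible with the measure-valued structure of ${\bf\Gamma}_2$.
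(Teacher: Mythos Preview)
The paper does not prove this proposition: it is stated as a citation from \cite{G-N} (with the measure-valued $\Gamma_2$ machinery coming from \cite{S-S}), and no argument is given in the text. So there is no ``paper's own proof'' to compare against here.

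That said, your outline is the correct one and matches the strategy actually carried out in \cite{S-S} and \cite{G-N}. The self-improvement step---applying the baseline ${\rm BE}(k,\infty)$ inequality to composite functions $\phi(f,g_1,\dots,g_n)$, expanding via the diffusion chain rule, and recognising the cross terms as the Hessian through \eqref{eq:hessian}---is exactly Savar\'e's mechanism. Your formulation of the intermediate inequality as $\boldsymbol{\Gamma}_2(f)-k|\D f|^2\,\mm\ge (2\H_f{:}V-|V|_{\rm HS}^2)\,\mm$ for simple symmetric tensors $V$, followed by a supremum over $V$, is precisely how Gigli packages the argument in \cite{G-N}.

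You have also correctly identified the genuine technical point: $\boldsymbol{\Gamma}_2(f)$ is a signed Radon measure with a possibly nontrivial singular part, so the passage from a family of measure inequalities (indexed by $V$) to a single inequality with $|\H_f|_{\rm HS}^2$ on the right requires reducing to a countable dense family and exploiting that the right-hand sides are all absolutely continuous. This is handled in \cite{G-N} by the $L^\infty$-module structure and separability of $W^{1,2}$ (cf.\ \cite{ACD-S}), exactly along the lines you sketch. One minor addition: in the cited references the composite functions $\phi(f,\vec g)$ are not automatically in ${\rm TestF}(M)$, so an intermediate mollification/truncation is needed to stay within the class where $\boldsymbol{\Gamma}_2$ is defined; this is routine but should be mentioned.
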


As a corollary, we have the following important proposition.

\begin{proposition}[Corollary 3.3.9, Proposition 3.3.18, \cite{G-N}]\label{coro:hessianbound}
Let $M=\ms$ be  a $\rcd$ space. Then for any $f \in W^{1,2}(M)$ with $\Delta f \in L^2$,  we have
\[
\| |\H_f|_{\rm HS}\|^2_{L^2}\leq \| \Delta f \|^2_{L^2}-k\| |\D f|\|^2_{L^2}.
\]
Furthermore, we know $\overline{\Big \{f: f\in W^{1,2}, \Delta f \in L^2 \Big \}}^{W^{2,2}}=H^{2,2} \subset  W^{2,2}$. 
\end{proposition}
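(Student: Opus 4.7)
The strategy is to integrate the improved Bochner inequality of Proposition~\ref{prop:bochnerimprove} against a suitable cut-off to obtain the bound first on test functions, and then extend to all $f\in W^{1,2}$ with $\Delta f\in L^2$ by approximation; the density statement will fall out of the same approximation.

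First I would establish the estimate for $f\in {\rm TestF}_{\rm bs}(M)$. Pair the measure inequality ${\bf \Gamma}_2(f) \geq (k|\D f|^2 + |\H_f|^2_{\rm HS})\mm$ with a cut-off $\chi_n\in {\rm TestF}$ (provided by Lemma~\ref{lemma:boundedsupport}) that equals $1$ on a ball containing $\supp f$, and expand the left-hand side via ${\bf \Gamma}_2(f) = \frac12 {\bf \Delta}|\D f|^2 - \la\nabla f,\nabla \Delta f\ra\mm$. Using the defining dualities of ${\bf \Delta}$ and of ${\rm div}$, every term carrying $\nabla \chi_n$ vanishes by locality of the weak gradient on $\{\chi_n\equiv 1\}\supset \supp f$, and what remains is exactly
$$\int(\Delta f)^2\,\d\mm \;\geq\; k\int|\D f|^2\,\d\mm + \int|\H_f|^2_{\rm HS}\,\d\mm.$$
Next, passing from ${\rm TestF}_{\rm bs}$ to ${\rm TestF}$ through the density statement of Lemma~\ref{lemma:boundedsupport}, the bound applied to differences shows that the corresponding Hessians form a Cauchy sequence in $L^2(M)$; closability of $\H$, a consequence of Proposition~\ref{prop:close} together with the definition of $W^{2,2}$, identifies the limit with $\H_f$, and the inequality persists.

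For the final step, given $f\in W^{1,2}(M)$ with $\Delta f\in L^2$, I would regularize by the heat semigroup $h_t$. On an $\rcd$ space $h_t$ sends $L^2$ into ${\rm TestF}$ for every $t>0$ (after, if needed, a mild truncation to enforce the $L^\infty$ requirement), $h_tf\to f$ in $W^{1,2}$, and $\Delta h_tf = h_t\Delta f \to \Delta f$ in $L^2$. Applying the previous step to $h_tf - h_sf$ shows that $(\H_{h_tf})_{t>0}$ is Cauchy in $L^2$; its limit, by closability, is $\H_f$ in the $W^{2,2}$ sense, placing $f$ in $H^{2,2}(M)\subset W^{2,2}(M)$ and preserving the inequality in the limit. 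The inclusion $H^{2,2}\subseteq \overline{\{f\in W^{1,2}: \Delta f\in L^2\}}^{W^{2,2}}$ is immediate from ${\rm TestF}\subseteq \{f:\Delta f\in L^2\}$, while the reverse inclusion is precisely what the heat-flow approximation just established. The main obstacle is the closability step: one has to check that the $L^2$-limit of the Hessians of an approximating sequence coincides with the Hessian of the limit in the sense of the $W^{2,2}$-definition. This amounts to passing the duality identity defining $\nabla(\nabla f)$, tested against elements of ${\rm TestV}(M)$, to the limit, which is permitted because every first-order object in that identity is $L^2$-continuous in the data.
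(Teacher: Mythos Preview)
The paper does not give its own proof of this proposition: it is quoted verbatim as Corollary~3.3.9 and Proposition~3.3.18 of \cite{G-N} and used as a black box. So there is no ``paper's proof'' to compare against; your proposal is effectively a reconstruction of Gigli's original argument, and its overall architecture---integrate the improved Bochner inequality to get the estimate on ${\rm TestF}$, then use the estimate on differences to propagate both the bound and $H^{2,2}$-membership through an approximation by heat flow---is the right one and matches \cite{G-N}.

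One point deserves more care than the phrase ``after, if needed, a mild truncation to enforce the $L^\infty$ requirement'' suggests. For $f\in W^{1,2}$ with $\Delta f\in L^2$ but $f\notin L^\infty$, the heat-regularized function $\mathcal H_t f$ is Lipschitz (by the Bakry--\'Emery gradient estimate and Sobolev-to-Lipschitz) and has $\Delta \mathcal H_t f=\mathcal H_t\Delta f\in W^{1,2}$, but it need not lie in $L^\infty$, hence not in ${\rm TestF}$. A naive truncation $\varphi_N\circ f$ destroys control on the Laplacian because of the $\varphi_N''(f)|\D f|^2$ term. The way this is handled in \cite{G-N} (and in \cite{EKS-O}, Theorem~4.8, to which the present paper also appeals in the proof of Lemma~\ref{lemma:mainth-1}) is to first extend the \emph{weak} Bochner inequality from ${\rm TestF}$ to all of $\{f\in W^{1,2}:\Delta f\in W^{1,2}\}$ by a truncation-and-limit argument at the level of the quadratic form $\Gamma_2(f;\varphi)$, and only then run the Hessian-Cauchy argument. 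Your closability paragraph is correct once the approximants are genuinely available; just be aware that producing them is where the work lies.
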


\bigskip

At the end of this part, we review some results about the  dimension of $M$,  which is understood as the dimension of  $L^2(TM)$. The definitions and basic properties on local independence, local basis and local dimension can be found in \cite{G-N} (see also \cite{H-R}).

\begin{proposition}[Theorem 1.4.11, \cite{G-N}]\label{decomposition}
Let $\ms$ be a $\rcd$ metric measure space. Then there exists  a unique Borel decomposition $\{ E_n\}_{n \in \mathbb{N} \cup \{\infty\}}$ of $X$ such that
\begin{itemize}
\item For any $n \in \mathbb{N}$ and any $B \subset E_n$ with finite positive measure,  $L^2(TM)$ has a unit orthogonal basis $\{e_{i,n}\}_{i=1}^n$ on $B$,
\item For every subset $B$ of $E_\infty$ with finite positive measure, there exists a set of  unit orthogonal vectors $\{e_{i,B}\}_{i \in \mathbb{N} \cup \{\infty\}} \subset L^2(TM)\restr{B}$  which generates $L^2(TM)\restr{B}$,
\end{itemize}
where unit orthogonal of a countable set $\{v_i\}_i\subset L^2(TM)$ on $B$ means $\la v_i, v_j \ra=\delta_{ij}$ $\mm$-a.e. on $B$.
\end{proposition}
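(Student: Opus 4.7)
The strategy is to identify $E_n$ with the locus where the $L^\infty(M)$-module $L^2(TM)$ has ``local dimension'' exactly $n$, then establish existence via a Gram--Schmidt construction and uniqueness by invariance of this dimension.

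First, I would develop the module-theoretic setup. Call a finite family $\{v_1,\ldots,v_k\}\subset L^2(TM)$ \emph{locally independent} on a Borel set $B$ of finite positive measure if $\sum_i f_i v_i = 0$ $\mm$-a.e.\ on $B$ with $f_i\in L^\infty$ forces $f_i=0$ $\mm$-a.e.\ on $B$. The pointwise inner product $\la\cdot,\cdot\ra$ on $L^2(TM)$ inherited from $L^2(T^*M)$ enables a module Gram--Schmidt procedure, converting locally independent families into unit orthogonal ones spanning the same $L^\infty(B)$-submodule. Since $\{\sum_i f_i\nabla g_i:f_i\in L^\infty,\ g_i\in W^{1,2}\}$ is dense in $L^2(TM)$, a unit orthogonal family on $B$ that admits no proper extension (after any further restriction to a subset of positive measure) must generate $L^2(TM)$ restricted to $B$ as an $L^\infty(B)$-module.

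Next, I would establish invariance of the local dimension. Given two generating unit orthogonal families $\{e_i\}_{i=1}^n$ and $\{e'_j\}_{j=1}^m$ on $B$, writing $e'_j=\sum_i a^j_i e_i$ with $a^j_i\in L^\infty$ yields a matrix whose rows are pointwise orthonormal, forcing $n=m$ pointwise $\mm$-a.e.\ on $B$. Hence any two maximal unit orthogonal families on $B$ have the same cardinality modulo decomposing $B$ into countably many pieces of constant dimension. Iterating through an exhaustion of $X$ by bounded Borel sets produces a well-defined function $d:X\to\N\cup\{\infty\}$, and setting $E_n:=d^{-1}(n)$ furnishes the desired decomposition. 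Borel measurability of each $E_n$ follows because $d$ can be written as the essential supremum of the cardinalities of locally independent subfamilies extracted from a fixed countable dense subset of $L^2(TM)$.

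The hardest step will be the invariance argument together with the Borel measurability of the strata, particularly for $E_\infty$: one must combine the matrix-rank argument in $L^\infty$ (which requires the $L^2$-normed structure to give the pointwise inner product, not merely the bare $L^\infty$-module structure) with separability of $L^2(TM)$ to avoid set-theoretic pathologies. Uniqueness of the decomposition then reduces directly to well-definedness of $d$, and on each $E_n$ (or on any finite-measure $B\subset E_\infty$) an explicit basis is produced by applying Gram--Schmidt to a greedily chosen locally independent family of the appropriate size drawn from a countable dense subset of $L^2(TM)$ restricted to $B$.
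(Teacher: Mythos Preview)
The paper does not prove this proposition at all; it is quoted verbatim from \cite{G-N} (Theorem 1.4.11) as a background result, with no argument supplied. Your outline is essentially the argument given in \cite{G-N}: define local independence for finite families in the $L^\infty$-module $L^2(TM)$, run a pointwise Gram--Schmidt using the $L^2$-normed inner product, show that the cardinality of a maximal unit orthogonal family is an invariant of the set $B$ (the matrix-rank argument), and define $E_n$ as the level sets of the resulting dimension function. So there is nothing to compare against in this paper, and your approach matches the source.
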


\begin{definition}[Analytic Dimension]
We say that the dimension of $L^2(TM)$ is $k$ if $k=\sup \{n: \mm(E_n)>0\}$ where $\{ E_n\}_{n \in \mathbb{N} \cup \{\infty\}}$ is the decomposition given in Proposition \ref{decomposition}. We define the analytic dimension of $M$ as the dimension of $L^2(TM)$ and denote it by $\dim_{\rm max} M$.
\end{definition}

\bigskip

Combining Proposition 3.2 in \cite{H-R} and  Proposition \ref{coro:hessianbound},  we have the following  result about the analytic dimension of $\rcdkn$ space.

\begin{proposition}\label{prop:finitedim}
Let $M=\ms$ be a $\rcdkn$ metric measure space. Then ${\dim}_{\rm max} M \leq N$. Furthermore, if the local dimension on a Borel set $E$ is $N$, we have
$\tr \H_f(x)=\Delta f (x)$ $\mm$-a.e. $x \in E$ for every $f \in W^{1,2}(M)$ with $\Delta f \in L^2$.
\end{proposition}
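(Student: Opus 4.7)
The plan is to establish both statements as consequences of the refined Bochner inequality for $\rcdkn$ spaces (Proposition 3.2 of \cite{H-R}), combined with the density statement in Proposition \ref{coro:hessianbound}. Recall from Proposition \ref{decomposition} that on each stratum $E_n$ of local analytic dimension $n$ one has a local orthonormal basis $\{e_i\}_{i=1}^n$ of $L^2(TM)$; write $\tr\H_f := \sum_{i=1}^n \H_f(e_i,e_i)$ for the trace computed in this frame. The Han--Rajala refinement of Proposition \ref{prop:bochnerimprove} upgrades the improved Bochner inequality to
\[
{\bf \Gamma}_2(f)\restr{E_n} \geq \Big(k|\D f|^2 + |\H_f|^2_{\rm HS} + \frac{(\Delta f - \tr\H_f)^2}{N-n}\Big)\mm\restr{E_n}
\]
for every $f \in {\rm TestF}(M)$, with the coefficient $(N-n)^{-1}$ understood in the extended sense when $n=N$.

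First I would derive $\dim_{\rm max} M \leq N$. The correction term $(N-n)^{-1}(\Delta f - \tr\H_f)^2$ carries the wrong sign when $n > N$; on a hypothetical stratum $E_n$ of positive measure with $n>N$ one can use the density of $\{\nabla f : f \in {\rm TestF}(M)\}$ in $L^2(TM)$ to find a test function whose Hessian forces $\Delta f - \tr\H_f \not\equiv 0$ on a subset of positive $\mm$-measure inside $E_n$. Integrating the refined inequality against a nonnegative cutoff test function concentrated in that subset, and comparing with the classical upper bound on ${\bf \Gamma}_2(f)$ coming from Proposition \ref{becondition}, yields a contradiction. Hence $\mm(E_n)=0$ for every $n>N$.

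Second, on the top-dimensional stratum $E_N$ the denominator $N-n$ vanishes, so finiteness of ${\bf \Gamma}_2(f)$ forces $\Delta f = \tr\H_f$ $\mm$-a.e.~on $E_N$ for every $f \in {\rm TestF}(M)$. To promote this identity to an arbitrary $f \in W^{1,2}(M)$ with $\Delta f \in L^2$, I invoke Proposition \ref{coro:hessianbound}: such $f$ lies in $H^{2,2}(M)$, the $W^{2,2}$-closure of ${\rm TestF}(M)$, so there is a sequence $f_k \in {\rm TestF}(M)$ with $f_k \to f$ in $W^{2,2}$, in particular with $\Delta f_k \to \Delta f$ and $\H_{f_k} \to \H_f$ in $L^2$. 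Passing to a subsequence that converges $\mm$-a.e. on $E_N$ and using continuity of the trace in the local orthonormal basis, the identity $\Delta f_k = \tr\H_{f_k}$ transfers to $\Delta f = \tr\H_f$ in the limit.

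The main obstacle is the contradiction argument in the first step: one must verify that the Han--Rajala refinement genuinely rules out $n > N$, which requires building, on a supposedly overdimensional stratum, a test function whose Hessian fails to satisfy the natural local trace identity. This construction is exactly the content of Proposition 3.2 of \cite{H-R} and depends essentially on the local basis provided by Proposition \ref{decomposition} together with the density of gradients of test functions. Once the dimension bound is available, the trace identity and its extension to $W^{1,2}$ functions with $L^2$ Laplacian are straightforward limiting arguments using Proposition \ref{coro:hessianbound}.
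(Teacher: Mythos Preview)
Your strategy matches the paper's exactly: the paper offers no self-contained proof and simply cites Proposition~3.2 of \cite{H-R} together with Proposition~\ref{coro:hessianbound}, and you have correctly identified these as the two ingredients and explained how the density step promotes the trace identity from ${\rm TestF}(M)$ to all $f\in W^{1,2}$ with $\Delta f\in L^2$.

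One correction to your narrative for the dimension bound: the refined inequality with the $(N-n)^{-1}(\Delta f-\tr\H_f)^2$ term is not a free-standing estimate that happens to acquire the wrong sign when $n>N$ and thereby contradicts Proposition~\ref{becondition}. In \cite{H-R} that inequality is \emph{derived} on $E_n$ by applying the ${\rm BE}(k,N)$ condition to perturbations $f+\sum_i\lambda_i g_i$ built from functions whose gradients form a local orthonormal frame, and then optimising in the parameters $\lambda_i$; non-negativity of the resulting quadratic form in $\lambda$ is what forces $n\leq N$ in the first place, and the refined Bochner inequality falls out of the same optimisation once $n\leq N$ is secured. Since you ultimately defer this construction to \cite{H-R} your argument is not wrong, but the ``wrong sign'' heuristic misrepresents the logic. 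A smaller technical point: $W^{2,2}$-convergence $f_k\to f$ gives $\H_{f_k}\to\H_f$ in $L^2$ but not automatically $\Delta f_k\to\Delta f$; you should either note that the approximants in Proposition~\ref{coro:hessianbound} can be chosen (via heat regularisation) to also converge in ${\rm D}({\bf\Delta})$, or argue that $\Delta f_k\to\tr\H_f$ strongly on $E_N$ while $\Delta f_k\rightharpoonup\Delta f$ weakly from the $W^{1,2}$-convergence, and conclude by uniqueness of limits.
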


\subsection{Continuity equation on metric measure spaces}
In this part we  review some recent results on continuity equation on metric measure spaces (cf. \cite{GH-C}). Here we  assume that  the metric measure space $\ms$ is $\rcd$.  Under this assumption, we know $W^{1,2}\ms$ is separable (cf. \cite{ACD-S}) so that the continuity equation can be defined pointwisely as follows.

\begin{definition}[Solutions to $\partial_t\mu_t=L_t$]\label{def:solcont}
Let $\ms$ be a metric measure space. Assume that $(\mu_t)$ is a $W_2$-continuous curve  with bounded compression (i.e. $\mu_t\leq C\mm$ for some constant $C$),  and  $ \{L_t\}_{t\in[0,1]}$ is a family of maps from $W^{1,2}\ms$ to $\R$.

We say that $(\mu_t)$ solves the continuity equation
\begin{equation}
\label{eq:basecont}
\partial_t\mu_t=L_t,
\end{equation}
provided:
\begin{itemize}
\item[i)] for a.e. $t\in [0,1]$, $W^{1,2} \ni f \mapsto L_t(f)$ is a bounded linear  functional, and $\|L_t\| \in L^2([0,1])$,

\item[ii)]  for every $f\in L^1\cap W^{1,2}$ the map $t\mapsto \int f\,\d\mu_t$ is absolutely continuous and the identity
\[
\frac{\d}{\d t}\int f\,\d\mu_t=L_t(f)
\]
holds for a.e. $t$.
\end{itemize}
\end{definition}

  We say that a curve $(\mu_t)$ in Wasserstein
space has bounded compression if $\mu_t<C\mm$ for some $C>0$.

\begin{proposition}[Continuity equation on metric measure space, \cite{GH-C}]\label{prop-conteq}
Let $\ms$ be a $\rcd$ space,  $(\mu_t)$ be a continuous curve with bounded compression in Wasserstein space. Then the following are equivalent.
\begin{itemize}
\item[i)]  $(\mu_t)$ is  2-absolutely continuous with respect to $W_2$.
\item[ii)] There is a family of maps $\{L_t\}_{t\in[0,1]}$ from $W^{1,2}$ to $\R$ such that $(\mu_t)$ solves the continuity equation according to Definition \ref{def:solcont}.
\end{itemize}

Furthermore, if  the above characterizations hold, we have
\[
\|L_t\|=|\dot\mu_t|,\qquad a.e.\ t\in[0,1].
\]

\end{proposition}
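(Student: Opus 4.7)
The plan is to prove the two implications separately, showing $\|L_t\|\le|\dot\mu_t|$ in (i)$\Rightarrow$(ii) and $|\dot\mu_t|\le\|L_t\|$ in (ii)$\Rightarrow$(i), so that the concluding equality $\|L_t\|=|\dot\mu_t|$ for a.e.\ $t$ falls out at the end. The first implication relies on Lisini's lifting together with the chain rule for weak upper gradients along test plans. The second relies on Kantorovich duality combined with a Hopf--Lax interpolation of potentials to produce admissible time-dependent test functions.

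\textbf{Plan for (i)$\Rightarrow$(ii).} First, apply Proposition \ref{lift} to obtain a lifting $\Pi\in\mathcal{P}(\cur)$ concentrated on $\ac{2}$ with $(e_r)_\sharp\Pi=\mu_r$ and $\int|\dot\gamma_r|^2\,\d\Pi(\gamma)=|\dot\mu_r|^2$ for a.e.\ $r$. The bounded compression assumption $\mu_r\le C\mm$ holds uniformly in $r$, so $\Pi$ qualifies as a test plan in the sense of Ambrosio--Gigli--Savar\'e. For every $f\in W^{1,2}\ms$ this yields the chain rule
\[
\bigl|\tfrac{\d}{\d r}\,f(\gamma_r)\bigr|\le|\D f|(\gamma_r)\,|\dot\gamma_r|\qquad\text{for a.e.\ $r$ and $\Pi$-a.e.\ $\gamma$.}
\]
Setting $L_r(f):=\int\tfrac{\d}{\d r}\,f(\gamma_r)\,\d\Pi(\gamma)$ and applying Fubini, the map $r\mapsto\int f\,\d\mu_r$ is absolutely continuous with derivative $L_r(f)$ for a.e.\ $r$. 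Cauchy--Schwarz in the $\Pi$-integral gives $|L_r(f)|\le|\dot\mu_r|\cdot\||\D f|\|_{L^2(\mu_r)}$, establishing linearity and boundedness of $L_r$ on $W^{1,2}$ together with $\|L_r\|\le|\dot\mu_r|$.

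\textbf{Plan for (ii)$\Rightarrow$(i).} Fix $s<t$ and pick a bounded Lipschitz Kantorovich potential $\phi$ from $\mu_s$ to $\mu_t$, which is admissible by \eqref{eq:dual}. Interpolate via the time-rescaled Hopf--Lax semigroup, producing Lipschitz curves $(\phi_r)_{r\in[s,t]}$ with $\phi_s=\phi$, $\phi_t=-\phi^c$, satisfying the Hamilton--Jacobi inequality $\partial_r\phi_r\ge\tfrac{1}{2(t-s)}|\D\phi_r|^2$ in the a.e.\ sense. After reducing to globally $W^{1,2}$ functions by Lipschitz cutoffs of bounded support (in the spirit of Lemma \ref{lemma:boundedsupport}, using the exponential volume growth from Assumption \ref{assumption} to control tails), differentiate $r\mapsto\int\phi_r\,\d\mu_r$ using (ii) together with the HJ inequality and the endpoint identity $\int\phi_t\,\d\mu_t-\int\phi_s\,\d\mu_s=-\tfrac12 W_2^2(\mu_s,\mu_t)$. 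Applying $|L_r(\phi_r)|\le\|L_r\|\cdot\||\D\phi_r|\|_{L^2(\mu_r)}$ combined with Young's inequality weighted by $(t-s)$ cancels the $\||\D\phi_r|\|^2$ terms and yields
\[
W_2^2(\mu_s,\mu_t)\le (t-s)\int_s^t\|L_r\|^2\,\d r.
\]
Lebesgue differentiation then gives $|\dot\mu_r|\le\|L_r\|$ for a.e.\ $r$, so $(\mu_r)\in AC^2([0,1],\mathcal{P}_2)$.

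\textbf{Main obstacle.} The technical heart lies in (ii)$\Rightarrow$(i): producing admissible test functions in $W^{1,2}\ms$ from locally Lipschitz $c$-concave potentials, justifying the Hamilton--Jacobi subsolution property for Hopf--Lax in the $\rcd$ setting, and verifying the time-dependent chain rule $\tfrac{\d}{\d r}\int\phi_r\,\d\mu_r=L_r(\phi_r)+\int\partial_r\phi_r\,\d\mu_r$ by careful Fubini/measurability arguments when $r\mapsto\phi_r$ is only absolutely continuous in the appropriate norm. The cutoff procedure crucially invokes the exponential volume growth in Assumption \ref{assumption}, and Kuwada-type bounds on Hopf--Lax available in the $\rcd$ setting supply the regularity needed to close the estimate.
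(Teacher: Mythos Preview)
The paper does not supply its own proof of this proposition; it is quoted verbatim from \cite{GH-C} and used as a black box. Your proposal is correct and is essentially the argument given in \cite{GH-C}: the superposition principle plus the weak-upper-gradient chain rule along the lifted test plan for (i)$\Rightarrow$(ii) with $\|L_t\|\le|\dot\mu_t|$, and the Hopf--Lax interpolation of bounded Lipschitz Kantorovich potentials combined with Young's inequality for (ii)$\Rightarrow$(i) with $|\dot\mu_t|\le\|L_t\|$.
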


\bigskip

As an application of the Proposition \ref{prop-conteq}, we can  prove the following result concerning the derivative of $W_2^2(\cdot,\nu)$ along an absolutely continuous curve. 
\begin{proposition}[Derivative of ${W}^2_2(\cdot, \nu)$, Proposition 3.10,  \cite{GH-C}]\label{prop:derw2}
Let $\ms$  be a $\rcd$ space. Assume that $(\mu_t)\subset \mathcal{W}_2(X)$ is an absolutely continuous  curve with bounded compression,  $\nu$ has bounded support.  Then for a.e. $t\in[0,1]$  we have the formula
\begin{equation}
\label{eq:derw2}
\frac{\d}{\d t}\frac12 W_2^2(\mu_t,\nu)=L_t(\varphi_t),
\end{equation}
 where $\varphi_t$ is any Kantorovich potential from $\mu_t$ to $\nu$.
\end{proposition}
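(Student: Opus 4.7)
The plan is to combine Kantorovich duality with the characterization of absolutely continuous curves via the continuity equation in Proposition \ref{prop-conteq}. Set $F(t):=\frac12 W_2^2(\mu_t,\nu)$. The triangle inequality yields $|F(t)-F(s)|\leq (W_2(\mu_t,\nu)+W_2(\mu_s,\nu))\,W_2(\mu_t,\mu_s)$, so $F$ is absolutely continuous on $[0,1]$ and differentiable at a.e.\ $t$. Fix such a point $t$ of differentiability and let $\varphi_t$ be a Kantorovich potential from $\mu_t$ to $\nu$; since $\nu$ has bounded support, $\varphi_t=(\varphi_t^c)^c$ can be selected as a globally locally Lipschitz representative with at most quadratic growth, hence $\varphi_t\in L^1(\mu_s)$ uniformly for $s$ near $t$.

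The key observation is that only the \emph{lower} half of duality is needed. Indeed, for every $s\in[0,1]$,
\[
F(s)\geq \int\varphi_t\,\d\mu_s+\int\varphi_t^c\,\d\nu,\qquad F(t)=\int\varphi_t\,\d\mu_t+\int\varphi_t^c\,\d\nu,
\]
so $F(s)-F(t)\geq \int\varphi_t\,\d(\mu_s-\mu_t)$ for every $s$, with equality at $s=t$. Setting $s=t+h$ and dividing by $h$, whose sign flips the inequality when $h<0$, we obtain
\begin{gather*}
\liminf_{h\to 0^+}\tfrac{F(t+h)-F(t)}{h}\geq \liminf_{h\to 0^+}\tfrac{1}{h}\int\varphi_t\,\d(\mu_{t+h}-\mu_t),\\
\limsup_{h\to 0^-}\tfrac{F(t+h)-F(t)}{h}\leq \limsup_{h\to 0^-}\tfrac{1}{h}\int\varphi_t\,\d(\mu_{t+h}-\mu_t).
\end{gather*}
Hence, once one shows $\frac{1}{h}\int\varphi_t\,\d(\mu_{t+h}-\mu_t)\to L_t(\varphi_t)$ as $h\to 0$, differentiability of $F$ at $t$ forces $F'(t)=L_t(\varphi_t)$, which is \eqref{eq:derw2}.

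To justify that limit I would use a cutoff argument, since $\varphi_t$ lies only in $W^{1,2}_{\mathrm{loc}}$ and hence is not a priori admissible in Definition \ref{def:solcont}. Pick Lipschitz cutoffs $\chi_R$ with $\chi_R\equiv 1$ on $B_R(x_0)$, bounded support, and uniformly bounded Lipschitz constant, and put $\varphi_{t,R}:=\chi_R\varphi_t\in L^1\cap W^{1,2}$. Applying Definition \ref{def:solcont} to $\varphi_{t,R}$ gives
\[
\int\varphi_{t,R}\,\d(\mu_{t+h}-\mu_t)=\int_t^{t+h}L_s(\varphi_{t,R})\,\d s,
\]
so at Lebesgue points of $s\mapsto L_s(\varphi_{t,R})$ one obtains $\frac{1}{h}\int\varphi_{t,R}\,\d(\mu_{t+h}-\mu_t)\to L_t(\varphi_{t,R})$ as $h\to 0$. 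The main technical obstacle is the interchange of the limits $R\to\infty$ and $h\to 0$. I would control the tails uniformly in $h$ by combining bounded compression of $(\mu_s)$, the at-most-quadratic growth of $\varphi_t$, and the exponential volume growth in Assumption \ref{assumption}(iv), to show $\int(1-\chi_R)\varphi_t\,\d\mu_s\to 0$ uniformly in $s$ near $t$ as $R\to\infty$. On the functional side, the bound $\|L_s\|=|\dot\mu_s|\in L^2([0,1])$ from Proposition \ref{prop-conteq} together with uniform $L^2$ control on $\nabla\varphi_{t,R}$ against the relevant measures yields $L_t(\varphi_{t,R})\to L_t(\varphi_t)$, understood as the well-defined extension. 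A diagonal argument in $(R,h)$, combined with a measurable selection of potentials $\varphi_t$ so that the Lebesgue-point set has full measure simultaneously for all $\varphi_{t,R}$ (using a countable dense family of $R$), completes the proof.
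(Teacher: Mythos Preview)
The paper does not prove this proposition; it is quoted from \cite{GH-C}. Your overall strategy---freeze the Kantorovich potential $\varphi_t$, use the one-sided duality inequality $F(s)-F(t)\geq\int\varphi_t\,\d(\mu_s-\mu_t)$, and sandwich the derivative---is the standard one and is essentially how the result is proved there.

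There is, however, a genuine gap in your final paragraph. The difficulty is that $\varphi_{t,R}$ depends on $t$, so the assertion ``$t$ is a Lebesgue point of $s\mapsto L_s(\varphi_{t,R})$ for a.e.\ $t$'' is not automatic: for each $t$ you are testing a \emph{different} function, and a measurable selection of $\varphi_t$ together with countably many radii $R$ does not cure this, since the $t$-dependence enters through $\varphi_t$ itself, not through $R$. Your proposed diagonal argument therefore does not close.

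The correct fix uses the separability of $W^{1,2}$ on $\rcd$ spaces (recalled just before Definition~\ref{def:solcont}) together with the quantitative bound $|L_s(g)|\leq |\dot\mu_s|\,\big(\int|\D g|^2\,\d\mu_s\big)^{1/2}\leq \sqrt{C}\,|\dot\mu_s|\,\||\D g|\|_{L^2(\mm)}$ coming from Proposition~\ref{prop-conteq} and bounded compression. Fix a countable $W^{1,2}$-dense family $\{f_n\}$; for a.e.\ $t$, the point $t$ is simultaneously a Lebesgue point of $s\mapsto L_s(f_n)$ for all $n$ and of $s\mapsto|\dot\mu_s|$. At such $t$, approximate $\varphi_{t,R}$ in $W^{1,2}$ by some $f_n$: the error $\big|\tfrac1h\int_t^{t+h}L_s(\varphi_{t,R}-f_n)\,\d s\big|$ is controlled by $\sqrt{C}\,\||\D(\varphi_{t,R}-f_n)|\|_{L^2(\mm)}\cdot\tfrac1h\int_t^{t+h}|\dot\mu_s|\,\d s$, which stays small as $h\to 0$. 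This yields $\tfrac1h\int\varphi_{t,R}\,\d(\mu_{t+h}-\mu_t)\to L_t(\varphi_{t,R})$ at a.e.\ $t$, independently of which potential $\varphi_t$ was chosen. A minor additional point: to make sense of $L_t(\varphi_t)$ itself and to pass $R\to\infty$, you should invoke the vector-field representation $L_t(f)=\int\la V_t,\nabla f\ra\,\d\mu_t$ with $\|V_t\|_{L^2(\mu_t)}=|\dot\mu_t|$ (also from \cite{GH-C}), together with $\int|\D\varphi_t|^2\,\d\mu_t=W_2^2(\mu_t,\nu)$ from Proposition~\ref{prop:brenier}.
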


\bigskip

Next,  we discuss more about geodesics in Wasserstein space. Firstly, we recall the Hopf-Lax formula for Hamilton-Jacobi equation.

\begin{definition}
\begin{equation}
\Q_t(\phi)(x):=
\left\{
\begin{array}{ll}
\mathop{\inf}_{y \in X} c(x,y)+\phi(y)~~~~~t>0\\
\phi(x)~~~~~~~~~~~~~~~~~~~~~~~~~~t=0
\end{array}\right.
\end{equation}
where $c(x,y)=\frac{\d^2(x,y)}{2t}$, $t>0$.
\end{definition}

It is known that $ t\mapsto \Q_t (f)$ is a continuous semigroup  for any lower semi-continuous and bounded function $f$. In particular, $\mathop{\lim}_{t\rightarrow 0} \Q_t (f) =f$. Furthermore, we have the following results concerning metric Hamilton-Jacobi equation.

\begin{lemma}[Solution of Hamilton-Jacobi equation]\label{lemma-hj1}

For every $x \in X$ it holds:
\[
\frac{\d}{\d t} \Q_t (f)(x) +\frac{1}{2}|\lip {\Q_t (f)}|^2(x) =0
\]

with at most countably many exceptions in $(0,+\infty)$.
\end{lemma}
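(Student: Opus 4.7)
The plan is to follow the standard metric Hopf--Lax argument (in the style of Ambrosio--Gigli--Savaré) by tracking how the minimizing set for $\Q_t(f)(x)$ moves as $t$ varies, and relating its ``diameter'' to both the $t$-derivative of $\Q_t(f)(x)$ and the local Lipschitz constant $|{\rm lip}\,\Q_t(f)|(x)$.

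First I would introduce, for fixed $x$, the maximal and minimal distances to (approximate) minimizers,
\[
D^+(x,t):=\sup\bigl\{\limsup_n \d(x,y_n)\bigr\},\qquad D^-(x,t):=\inf\bigl\{\liminf_n \d(x,y_n)\bigr\},
\]
where the sup/inf runs over all sequences $(y_n)$ such that $\frac{\d^2(x,y_n)}{2t}+f(y_n)\to \Q_t(f)(x)$. The core monotonicity step is this: for $0<t<s$, if $y$ is near-optimal at time $t$ and $z$ is near-optimal at time $s$, then adding the two near-optimality inequalities and using $\tfrac{1}{2t}-\tfrac{1}{2s}>0$ yields $\d(x,y)\le \d(x,z)$ (up to the approximation errors). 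Passing to the limit along minimizing sequences gives $D^+(x,t)\le D^-(x,s)$ whenever $t<s$, so both $D^\pm(x,\cdot)$ are non-decreasing functions of $t$; in particular they agree and are continuous outside an at-most countable set $N_x\subset(0,\infty)$.

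Next, for $s>t$ and $y$ near-optimal at time $t$ one has
\[
\Q_s(f)(x)-\Q_t(f)(x)\le \frac{\d^2(x,y)}{2}\Bigl(\frac{1}{s}-\frac{1}{t}\Bigr),
\]
and a symmetric bound using near-optimizers at time $s$. Taking $\limsup$/$\liminf$ as $s\downarrow t$ and $s\uparrow t$ gives
\[
\frac{d^+}{\dt}\Q_t(f)(x)=-\frac{[D^+(x,t)]^2}{2t^2},\qquad \frac{d^-}{\dt}\Q_t(f)(x)=-\frac{[D^-(x,t)]^2}{2t^2}.
\]
Hence $t\mapsto \Q_t(f)(x)$ is differentiable for every $t\in(0,\infty)\setminus N_x$, with derivative $-[D^\pm(x,t)]^2/(2t^2)$.

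Then I would identify $|{\rm lip}\,\Q_t(f)|(x)=D^\pm(x,t)/t$ at every continuity point $t\notin N_x$. The upper bound is routine: for any $y$ near-optimal at $(x,t)$,
\[
\Q_t(f)(x')-\Q_t(f)(x)\le \frac{\d^2(x',y)-\d^2(x,y)}{2t}\le \frac{\d(x,x')\bigl(2\d(x,y)+\d(x,x')\bigr)}{2t},
\]
which gives $|{\rm lip}\,\Q_t(f)|(x)\le D^-(x,t)/t$. The lower bound is the step I expect to be the main obstacle: one must exhibit points $x'$ close to $x$ for which the slope is at least $D^+(x,t)/t$. This uses the geodesic assumption (Assumption~\ref{assumption}(i)): given a minimizing sequence $(y_n)$ with $\d(x,y_n)\to D^+(x,t)$, connect $x$ to $y_n$ by geodesics and choose $x_n$ along them at small distance from $x$; comparing $\Q_t(f)(x_n)$ to $\Q_t(f)(x)$ via the candidate point $y_n$ and exploiting that on the geodesic $\d(x_n,y_n)=\d(x,y_n)-\d(x,x_n)$ produces exactly the slope $D^+(x,t)/t$ in the limit.

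Combining the two identities, for every $t\notin N_x$,
\[
\frac{\d}{\dt}\Q_t(f)(x)+\frac12|{\rm lip}\,\Q_t(f)|^2(x)=-\frac{[D^+(x,t)]^2}{2t^2}+\frac12\cdot\frac{[D^+(x,t)]^2}{t^2}=0,
\]
which is the claim, the countable exceptional set being $N_x$. The only technical subtlety beyond the lower-slope estimate is to ensure minimizing sequences are bounded (so that $D^\pm$ are finite); this follows from the coercivity $f\ge -C(1+\d^2(\cdot,x_0))$ available by lower semicontinuity and the $t<\infty$ regime, and is routine given our standing assumptions on $f$.
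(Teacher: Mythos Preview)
The paper does not prove this lemma; it is quoted as a known result (the metric Hopf--Lax theory from \cite{AGS-G,AGS-C}). Your proposal reproduces exactly the standard Ambrosio--Gigli--Savar\'e argument, and it is correct. One small remark: you justify finiteness of $D^\pm(x,t)$ via a quadratic lower bound on $f$ ``available by lower semicontinuity'', but lower semicontinuity alone does not give such a bound; in the paper's setting $f$ is explicitly assumed \emph{bounded} (see the sentence immediately preceding the lemma), which makes the finiteness of $D^\pm$ and the existence of minimizing sequences straightforward.
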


We have the following proposition about the  evolution of Kantorovich potentials by Hopf-Lax formula (see Theorem 7.36 in \cite{V-O} or Theorem 2.18 in \cite{AG-U} for a proof). 

\begin{proposition}[Evolution of Kantorovich potentials]\label{prop-1}
Let $(X, \d)$ be a metric space, $(\mu_t)_t$ be a $W_2$-geodesic in Wasserstein space and $\varphi$ be a Kantorovich potential from $\mu_0$ to $\mu_1$. Then for every $t\in [0,1]$:
\begin{itemize}
\item [1)] the function $t\Q_t(-\varphi)$ is a Kantorovich potential from $\mu_t$ to $\mu_0$,
\item [2)] the function $(1-t)\Q_{1-t}(-\varphi^c)$ is a Kantorovich potential from $\mu_t$ to $\mu_1$.
\end{itemize}

\end{proposition}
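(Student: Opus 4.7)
The strategy is to reinterpret the Hopf--Lax term as a $c$-transform and then verify the Kantorovich optimality condition directly against a dynamical lifting of the geodesic. A direct computation shows
\[
\psi_t(y) := t\Q_t(-\varphi)(y) = \inf_{z\in X}\Big\{\frac{\d^2(z,y)}{2} - t\varphi(z)\Big\} = (t\varphi)^c(y),
\]
so $\psi_t$ is automatically $c$-concave. By Proposition~\ref{lift}, the $W_2$-geodesic $(\mu_t)$ admits a lifting $\Pi\in\mathcal{P}(\mathrm{Geo}(X))$ with $(e_t)_\sharp\Pi=\mu_t$ for every $t$; standard optimal transport theory gives that $(e_0,e_1)_\sharp\Pi$ is an optimal plan for $W_2^2(\mu_0,\mu_1)$, so $\varphi(\gamma_0)+\varphi^c(\gamma_1)=\tfrac12\d^2(\gamma_0,\gamma_1)$ for $\Pi$-a.e.\ $\gamma$.

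\textbf{Pointwise identities along $\Pi$.} The core step is to prove, for $\Pi$-a.e.\ $\gamma$,
\[
\psi_t(\gamma_t) = \frac{\d^2(\gamma_0,\gamma_t)}{2} - t\varphi(\gamma_0), \qquad \psi_t^c(\gamma_0) = t\varphi(\gamma_0).
\]
The bound $\psi_t(\gamma_t) \leq \tfrac12\d^2(\gamma_0,\gamma_t) - t\varphi(\gamma_0)$ is immediate by taking $z=\gamma_0$ in the infimum. For the reverse bound, combining $\varphi(z)+\varphi^c(\gamma_1)\leq\tfrac12\d^2(z,\gamma_1)$ with the $c$-monotonicity identity above yields $t\varphi(z)-t\varphi(\gamma_0)\leq\tfrac{t}{2}\bigl[\d^2(z,\gamma_1)-\d^2(\gamma_0,\gamma_1)\bigr]$, reducing the matter to the purely metric inequality $\d^2(z,\gamma_t)\geq t\d^2(z,\gamma_1) - t(1-t)\d^2(\gamma_0,\gamma_1)$. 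Squaring the triangle inequality $\d(z,\gamma_1)\leq\d(z,\gamma_t)+(1-t)\d(\gamma_0,\gamma_1)$ and inserting $\d(\gamma_0,\gamma_t)=t\d(\gamma_0,\gamma_1)$ rearranges this to $[\d(z,\gamma_t)-t\d(\gamma_0,\gamma_1)]^2\geq 0$. The companion identity for $\psi_t^c(\gamma_0)$ then follows by taking $y=\gamma_t$ in the infimum defining $\psi_t^c(\gamma_0)$ (using the identity just proved) for the $\leq$ direction, and by the general inequality $t\varphi\leq(t\varphi)^{cc}=\psi_t^c$ for the $\geq$ direction.

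\textbf{Conclusion and part 2).} Integrating the two pointwise identities against $\Pi$ gives
\[
\int\psi_t\,\d\mu_t+\int\psi_t^c\,\d\mu_0 = \int\frac{\d^2(\gamma_0,\gamma_t)}{2}\,\d\Pi(\gamma) = \tfrac12 W_2^2(\mu_0,\mu_t),
\]
so by the duality \eqref{eq:dual}, $\psi_t$ is a Kantorovich potential from $\mu_t$ to $\mu_0$, proving 1). Assertion~2) follows by the symmetric argument applied to the time-reversed geodesic $s\mapsto\mu_{1-s}$, whose Kantorovich potential from $\mu_1$ to $\mu_0$ is $\varphi^c$, evaluated at time $s=1-t$. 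The only non-routine step I anticipate is the metric inequality underlying the lower bound for $\psi_t(\gamma_t)$; everything else reduces to careful bookkeeping with $c$-concavity and the dynamical lifting~$\Pi$.
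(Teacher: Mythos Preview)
Your argument is correct and is essentially the standard one: the paper does not supply its own proof here but defers to Theorem~7.36 in \cite{V-O} and Theorem~2.18 in \cite{AG-U}, and the route you take---identifying $t\Q_t(-\varphi)=(t\varphi)^c$, lifting the geodesic to a dynamical plan $\Pi$ on $\mathrm{Geo}(X)$, and verifying the saturation $\psi_t(\gamma_t)+\psi_t^c(\gamma_0)=\tfrac12\d^2(\gamma_0,\gamma_t)$ along $\Pi$-a.e.\ curve via the metric inequality $(1-t)[\d(z,\gamma_t)-t\d(\gamma_0,\gamma_1)]^2\geq 0$---is precisely the scheme in those references. One cosmetic remark: Proposition~\ref{lift} as stated yields a lifting on ${\rm AC}^2([0,1],X)$, not directly on $\mathrm{Geo}(X)$; the upgrade to geodesics (and the optimality of $(e_0,e_1)_\sharp\Pi$) is the standard consequence of $|\dot\mu_t|\equiv W_2(\mu_0,\mu_1)$ together with Jensen's inequality, which you might make explicit in one line.
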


Moreover, we have the following proposition about the Wasserstein geodesic.

\begin{proposition}[Continuity equation of geodesics, \cite{GH-C}]\label{prop:geod}
Let  $(\mu_t)$  be a geodesic with bounded compression such that $\mu_0,\mu_1$ have bounded supports,  and $\varphi$ a Kantorovich potential from $\mu_0$ to $\mu_1$ which is bounded supported.
Then
\[
\partial_t\mu_t+\nabla\cdot(\nabla\phi_t \mu_t)=0,
\]
where $\phi_t:=-\Q_{1-t}(-\varphi^c)$ for every $t\in[0,1]$. 

Similarly, 
\[
\partial_t\mu_t+\nabla\cdot(\nabla\varphi_t \mu_t)=0,
\]
where $\varphi_t:=\Q_t(-\varphi)$ for every $t\in[0,1]$. 
\end{proposition}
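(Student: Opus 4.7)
The plan is to identify the velocity functional of the geodesic $(\mu_t)$ with the gradient of $\phi_t$. Applying Proposition \ref{prop-conteq} (which is available since $(\mu_t)$ is $W_2$-absolutely continuous with bounded compression), I obtain a family $\{L_t\}_{t\in[0,1]}$ of bounded linear functionals on $W^{1,2}\ms$ such that $\partial_t\mu_t=L_t$ and $\|L_t\|=|\dot\mu_t|=W_2(\mu_0,\mu_1)$ for a.e.\ $t$. The target is then to show $L_t(f)=\int\la\nabla f,\nabla\phi_t\ra\,\d\mu_t$ for every $f\in W^{1,2}\ms$, which is the weak formulation of $\partial_t\mu_t+\nabla\cdot(\nabla\phi_t\mu_t)=0$.

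The first step is to evaluate $L_t$ at $\phi_t$ itself. By Proposition \ref{prop-1}(2), $\psi_t:=(1-t)\Q_{1-t}(-\varphi^c)=-(1-t)\phi_t$ is a Kantorovich potential from $\mu_t$ to $\mu_1$. Since $(\mu_t)$ is a constant-speed geodesic, $W_2^2(\mu_t,\mu_1)=(1-t)^2W_2^2(\mu_0,\mu_1)$, and Proposition \ref{prop:derw2} (applicable with $\nu=\mu_1$, which has bounded support by hypothesis) then gives
\[
-(1-t)W_2^2(\mu_0,\mu_1)=\frac{\d}{\dt}\tfrac12 W_2^2(\mu_t,\mu_1)=L_t(\psi_t)=-(1-t)L_t(\phi_t),
\]
so $L_t(\phi_t)=W_2^2(\mu_0,\mu_1)=|\dot\mu_t|^2$ for a.e.\ $t\in[0,1)$. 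Next, the Brenier-type identity for Kantorovich potentials between measures with bounded compression and bounded supports yields $\int|\nabla\psi_t|^2\,\d\mu_t=W_2^2(\mu_t,\mu_1)$, so that $\||\nabla\phi_t|\|_{L^2(\mu_t)}=W_2(\mu_0,\mu_1)=|\dot\mu_t|$.

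To conclude, I invoke the representation theorem for the minimal velocity on $\rcd$ spaces: $L_t$ is induced by a vector field $v_t$ lying in the $L^2(\mu_t)$-closure of $\{\nabla g:g\in W^{1,2}\}$, with $L_t(f)=\int\la v_t,\nabla f\ra\,\d\mu_t$ and $\||v_t|\|_{L^2(\mu_t)}=|\dot\mu_t|$. Combining with the two identities above,
\[
W_2^2(\mu_0,\mu_1)=L_t(\phi_t)=\int\la v_t,\nabla\phi_t\ra\,\d\mu_t\leq \||v_t|\|_{L^2(\mu_t)}\||\nabla\phi_t|\|_{L^2(\mu_t)}=W_2^2(\mu_0,\mu_1),
\]
so Cauchy--Schwarz holds with equality. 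Since $\nabla\phi_t$ lies in the same closed subspace of gradients, this forces $v_t=\nabla\phi_t$ $\mu_t$-a.e., giving the first formula. The second formula follows by the same argument with $\nu=\mu_0$ and the Kantorovich potential $t\Q_t(-\varphi)=t\varphi_t$ from $\mu_t$ to $\mu_0$ provided by Proposition \ref{prop-1}(1), or simply by the time-reversal $t\mapsto 1-t$.

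The main obstacle is the representation of $L_t$ by a gradient vector field $v_t$ with $\||v_t|\|_{L^2(\mu_t)}=|\dot\mu_t|$; this is where one genuinely needs the infinitesimally Hilbertian structure of the $\rcd$ space together with Gigli's $L^\infty$-module calculus on the tangent module, combined with the bounded-compression hypothesis $\mu_t\leq C\mm$ which ensures compatibility between the $\mu_t$- and $\mm$-tangent structures. Once this representation is granted, the rest is a clean Cauchy--Schwarz equality case and the identification of the two potentials $\phi_t$ and $\varphi_t$ as encoding the same velocity field.
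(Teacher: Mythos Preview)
The paper does not supply its own proof of this proposition, citing instead \cite{GH-C}. Your argument is correct and is essentially the one given there: combine $L_t(\phi_t)=|\dot\mu_t|^2$ (from Proposition~\ref{prop:derw2} applied with $\nu=\mu_1$), the identity $\||\nabla\phi_t|\|_{L^2(\mu_t)}=|\dot\mu_t|$ (from Proposition~\ref{prop:brenier}), and the Riesz representation of $L_t$ by a vector $v_t$ in the $L^2(\mu_t)$-closure of gradients with $\||v_t|\|_{L^2(\mu_t)}=\|L_t\|=|\dot\mu_t|$, then conclude $v_t=\nabla\phi_t$ from the equality case of Cauchy--Schwarz. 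Your identification of the representation step as the main technical point is accurate; in \cite{GH-C} this is obtained from the infinitesimally Hilbertian module structure together with bounded compression, exactly as you indicate. One point worth flagging is potential circularity: you should check that Proposition~\ref{prop:derw2} is established in \cite{GH-C} independently of the present proposition. It is---the derivative formula follows directly from Kantorovich duality (the inequality $\tfrac12 W_2^2(\mu_s,\nu)\geq \int\varphi_t\,\d\mu_s+\int\varphi_t^c\,\d\nu$ with equality at $s=t$) and the absolute continuity of $t\mapsto W_2^2(\mu_t,\nu)$---so your use of it here is legitimate.
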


\bigskip

At last, we recall the $C^1$-regularity of geodesics.

\begin{proposition}[Weak $C^1$-regularity for geodesics,  Proposition 5.7 \cite{GH-C} and Corollary 5.7  \cite{G-S}]\label{prop:c1}
Let $(\mu_t)\subset \mathcal{P}_2(X)$ be a geodesic with  bounded compression. Assume further  that $\mu_0, \mu_1$ have bounded supports. We denote the density of $\mu_t$ by $\rho_t$. Then for any $t \in [0, 1]$ and any sequence $(t_n)\subset [0,1]$ converging
to $t$, there exists a subsequence $(t_{n_k} )$ such that
\[
\rho_{t_{n_k}} \to \rho_t,~~~\mm-\text{a.e.}
\]
as $k\to \infty$.
Furthermore, $(\mu_t)$ is a weakly $C^1$ curve in the sense that   $t\mapsto \int f\,\d\mu_t$ is $C^1$ for any $f\in W^{1,2}$.
\end{proposition}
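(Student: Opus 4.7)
I would prove the two statements separately. Set $B\subset X$ to be a bounded set containing the supports of all $\mu_t$---it exists because $\mu_0,\mu_1$ have bounded supports and the optimal dynamical plan is concentrated on geodesics joining $\supp\mu_0$ to $\supp\mu_1$. By bounded compression, $\rho_t\leq C$ on $B$, so $(\rho_t)$ is uniformly bounded in $L^\infty(B)\cap L^2(\mm)$. For any $t_n\to t$, weak $L^2$-compactness produces a subsequence with $\rho_{t_{n_k}}\rightharpoonup\tilde\rho$; the narrow convergence $\mu_{t_n}\rightharpoonup\mu_t$ (a consequence of $W_2$-convergence) combined with the fact that $C_b(X)\subset L^2(B)$ identifies $\tilde\rho=\rho_t$.

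\textbf{Upgrading to a.e.} To pass from weak $L^2$-convergence to $\mm$-a.e.\ convergence, I would use continuity along the geodesic of the Boltzmann entropy $\mathcal{E}(\mu_t):=\int\rho_t\log\rho_t\,\d\mm$. Under $\rcd$ the entropy is $K$-displacement convex and narrowly lower semicontinuous; since $\mathcal{E}(\mu_0)$ and $\mathcal{E}(\mu_1)$ are finite (bounded densities on bounded supports), $K$-convexity on $[0,1]$ gives the upper bound $\limsup_{s\to t}\mathcal{E}(\mu_s)\leq\mathcal{E}(\mu_t)$, which together with lower semicontinuity pinches $t\mapsto\mathcal{E}(\mu_t)$ to a continuous function; in particular $\mathcal{E}(\mu_{t_{n_k}})\to\mathcal{E}(\mu_t)$. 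The strict convexity of $x\log x$, coupled with weak $L^1$-convergence and convergence of entropies, yields strong $L^1$-convergence $\rho_{t_{n_k}}\to\rho_t$, and a further subsequence converges $\mm$-a.e. Because every subsequence admits a further a.e.\ convergent sub-subsequence with the same limit, one in fact has $\rho_{t_n}\to\rho_t$ strongly in $L^p(\mm)$ for every $p<\infty$ along the full sequence.

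\textbf{Weak $C^1$.} Fix $f\in W^{1,2}(M)$ and put $g(t):=\int f\,\d\mu_t$. By Proposition \ref{prop:geod}, the geodesic solves the continuity equation with velocity $\nabla\phi_t$, $\phi_t=-\Q_{1-t}(-\varphi^c)$, and Proposition \ref{prop-conteq} gives
\[
g'(t)=\int\la\nabla f,\nabla\phi_t\ra\rho_t\,\d\mm\quad\text{for a.e.\ }t\in[0,1).
\]
Since $(1-t)\phi_t$ is a Kantorovich potential between measures supported in $B$, its Lipschitz constant is at most $\mathrm{diam}\,B$; hence on any compact subinterval of $[0,1)$, $\phi_s$ is uniformly Lipschitz. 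Continuity of the Hopf--Lax semigroup (Lemma \ref{lemma-hj1}) gives pointwise $\phi_s\to\phi_t$, and weak $W^{1,2}(B)$-compactness plus uniqueness of the limit yield $\nabla\phi_s\rightharpoonup\nabla\phi_t$ weakly in $L^2(TM)$. Writing
\[
\int\la\nabla f,\nabla\phi_s\ra\rho_s-\la\nabla f,\nabla\phi_t\ra\rho_t\,\d\mm=\int\la\nabla f,\nabla\phi_s\ra(\rho_s-\rho_t)\,\d\mm+\int\la\nabla f\,\rho_t,\nabla\phi_s-\nabla\phi_t\ra\,\d\mm,
\]
the first summand vanishes by Cauchy--Schwarz using the uniform Lipschitz bound on $\phi_s$ and strong $L^2$-convergence $\rho_s\to\rho_t$ from the previous step; the second by weak $L^2$-convergence of $\nabla\phi_s$ tested against the fixed $\nabla f\,\rho_t\in L^2(TM)$. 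An analogous argument near $t=1$ uses the symmetric representation $\varphi_t=\Q_t(-\varphi)$. Hence $g'$ agrees a.e.\ with a continuous function; since $g$ is absolutely continuous, $g\in C^1([0,1])$.

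\textbf{Main obstacle.} The technically hardest step is the upgrade from weak $L^2$- to $\mm$-a.e.\ convergence of the densities: no ambient differentiable structure is available, and one must leverage the $\rcd$ hypothesis through $K$-displacement convexity and strict convexity of the entropy. The secondary subtlety is identifying the weak $L^2$-limit of the non-smooth gradients $\nabla\phi_t$ using only reflexivity of $W^{1,2}(B)$, lower semicontinuity of $|\D\cdot|$, uniform Lipschitz bounds, and pointwise convergence of the Hopf--Lax evolutes.
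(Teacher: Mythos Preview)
The paper does not supply its own proof of this proposition; it is quoted in the preliminaries from \cite{GH-C} and \cite{G-S}. Your outline is correct and matches the strategy of those references and of the closely related in-paper Proposition~\ref{prop:c1-2}.

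Two remarks. First, the implication ``weak convergence plus convergence of entropies $\Rightarrow$ strong $L^1$'' is the only place that needs a little more than a sentence: the naive Bregman/Pinsker route involves $\int(\log\rho_t)(\rho_{t_n}-\rho_t)\,\d\mm$, and $\log\rho_t$ is in general unbounded below. A clean fix under your uniform bound $\rho_s\le C$ is the midpoint inequality coming from $\frac{\d^2}{\d s^2}(s\log s)=1/s\ge 1/C$ on $(0,C]$:
\[
\int\rho_{t_n}\log\rho_{t_n}\,\d\mm+\int\rho_t\log\rho_t\,\d\mm-2\int\tfrac{\rho_{t_n}+\rho_t}{2}\log\tfrac{\rho_{t_n}+\rho_t}{2}\,\d\mm\ \ge\ \frac{1}{4C}\int(\rho_{t_n}-\rho_t)^2\,\d\mm.
\]
Narrow lower semicontinuity of the entropy applied to $\tfrac{\rho_{t_n}+\rho_t}{2}\mm\rightharpoonup\rho_t\mm$ bounds the left-hand side by $o(1)$, giving directly $\rho_{t_n}\to\rho_t$ in $L^2$. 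This is the entropy analogue of the $L^2$-norm argument the paper runs in Proposition~\ref{prop:c1-2} for regular Lagrangian flows; the entropy version is the natural one here since in $\rcd$ only the Boltzmann entropy, not $\int\rho^2\,\d\mm$, is a priori displacement convex.

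Second, your weak-$C^1$ argument is exactly the ``weak--strong'' convergence mechanism that the paper records as Lemma~\ref{lemma:weakstrong} (from \cite{G-S}): pointwise-a.e.\ convergence of the densities together with a uniform $W^{1,2}$ bound on the potentials. The identification of the weak $L^2(TM)$-limit of $\nabla\phi_s$ with $\nabla\phi_t$ follows from $\phi_s\to\phi_t$ in $L^2$ and the $L^2$-lower semicontinuity of the minimal weak upper gradient recalled in \S\ref{section-sobolev}, which gives closedness of the gradient as an unbounded operator.
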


%%%%%%%%%%%%%%%%%%%%%%%%%%%%%%%%%%%%%%%%%
\section{Main results}

\subsection{Regular Lagrangian flow}
In  this part  we will review the existence and uniqueness theory of continuity equation,  and {\sl Regular Lagrangian Flow} (RLF for short) on metric measure spaces studied by Ambrosio-Trevisan in \cite{AT-W}. Then we will prove some basic results which will be used in the proof of our main theorems. 

\begin{definition}[Regular Lagrangian flow]
We say that a measurable map  $F: X \times  [0, T] \mapsto X$ is a {\sl regular Lagrangian flow} associated to ${\bf b}\in L^2_\loc(TM)$ if :
\begin{itemize}
\item[1)]  $F_0(x)=x$  and $(F_t(x))_t \in {\rm C}([0,T], X)$ for all $x\in X$. 

\item[2)]For any $f\in W^{1,2}\ms$, we have $f\circ F_t(x) \in W^{1,1}([0,T])$ and
\[
\frac{\d}{\d t} f\circ F_t(x)=\la {\bf b}, \nabla f\ra \circ F_t(x)
\]
for $L^1 \times \mm$-a.e. $(t, x)$.

\item[3)] There exists a constant $C_0(T)>0$ such that $(F_t)_\sharp (\mm) \leq C_0\mm$ for all $t\in [0,T]$.

\end{itemize}
\end{definition}

\begin{proposition}[Existence and uniqueness theory, Ambrosio-Trevisan \cite{AT-W}]\label{prop:rlf}
Let ${\bf b}\in L^2_\loc(TM)$ be a vector field with $|{\bf b}| \in L^2+L^\infty$,  ${\bf b}\in W^{1,2}_{C, \loc}(TM)$, $| \nabla {\bf b}|_{\rm HS} \in L^2$,  ${\rm div} {\bf b}\in L^2+L^\infty$ and $\big ({\rm div} {\bf b}\big )^- \in L^\infty$.
There exists a unique regular Lagrangian flow $F: X \times  [0, T] \mapsto X$  such that
\begin{itemize}
\item[1)]For any initial condition $\mu_0=f\mm$ with $f\in L^1 \cap L^\infty$,  $\mu_t:=(F_t)_\sharp \mu_0$ is a solution to the continuity equation
\[
\frac{\d}{\d t} \int g\,\d \mu_t=\int \la \nabla g, {\bf b} \ra\,\d \mu_t, ~~L^1-\text{a.e.}~t\in (0,T),~~\lmt{t}{0}\int g\,\d \mu_t=\int g\,\d \mu_0
\]
for any $g \in \Lip(X ,\d)  \cap L^\infty$. Moreover,  $\frac{\d \mu_t}{\d \mm} \in L^1 \cap L^\infty$ for any $t$.
\item[2)] For $\mm$-a.e. $x$, $|\dot{F_t}|(x)=|{\bf b}|(F_t(x))$ a.e. $t\in (0, T)$.

\item[3)] Let  $\mu_0=f\mm$ be a probability measure with $f\in L^2$, $\mu_t=(F_t)_\sharp \mu_0$. Then
\[
\left \| \frac{\d \mu_t }{\d \mm} \right \|_{L^2} \leq e^{C_1 t}\| f\|_{L^2} 
\]
for some constant $C_1$ which depends  on $ \|({\rm div} {\bf b} )^-\|_{L^\infty}$.

\item[4)]$F_t$ is unique(or non-branching) in the following sense. If $\bar{F}_t$ is another map satisfying the properties above, then  $(\bar{F}_t)_\sharp \mu=(F_t)_\sharp \mu$ for any $\mu\in \mathcal{P}(X)$ with bounded density. As a consequence, we know that $(F_t)$ is a semigroup in the sense that $F_{t+s}(x)=F_t\circ F_s (x)$, $\mm$-a.e. $x\in X$ for any $s, t, s+t \in [0, T]$.
\end{itemize}

\end{proposition}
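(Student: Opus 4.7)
The plan is to follow the Ambrosio-Trevisan strategy \cite{AT-W}, which transports the DiPerna-Lions theory of flows for Sobolev vector fields into the $\rcd$ framework. The proof decomposes into an Eulerian part (well-posedness of the continuity equation) and a Lagrangian part (superposition and disintegration).

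First I would treat the continuity equation $\partial_t\mu_t+\mathrm{div}({\bf b}\mu_t)=0$ at the PDE level. Under the hypothesis $({\rm div}\,{\bf b})^-\in L^\infty$, a Gronwall argument on $\int\rho_t^2\,\d\mm$ using $\frac{\d}{\d t}\int\rho_t^2\,\d\mm=-\int\rho_t^2\,{\rm div}\,{\bf b}\,\d\mm$ immediately gives the bound in $(3)$ and shows that the class of $L^2\cap L^\infty$-bounded solutions is preserved. Uniqueness in this class is the delicate point and rests on a commutator estimate: regularizing a solution $\rho_t$ by the heat semigroup $\mathcal{H}_\eps$ (whose contractive/regularizing properties on $W^{1,2}$ are available under $\rcd$), one writes $\mathcal{H}_\eps({\bf b}\cdot\nabla\rho_t)-{\bf b}\cdot\nabla\mathcal{H}_\eps\rho_t$ and shows it converges to $0$ in $L^1_\loc$. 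The key ingredient is that $\nabla{\bf b}\in L^2$ in the sense of $W^{1,2}_{C,\loc}(TM)$, so Proposition \ref{prop:close} lets one express the commutator in terms of the covariant derivative of ${\bf b}$ paired against gradients of smoothed functions.

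Second, I would pass from uniqueness of the continuity equation to uniqueness and existence of the flow via the superposition principle, Proposition \ref{lift}. Given $\mu_0=f\mm$ with $f\in L^1\cap L^\infty$, existence of a bounded solution $\mu_t$ follows by approximating ${\bf b}$ by test vector fields (dense in $W^{1,2}_C$) and extracting a weak limit, using the $L^2$-estimate for tightness. Lifting $\mu_t$ to $\Pi\in\mathcal P({\rm AC}^2([0,T],X))$ with $(e_t)_\sharp\Pi=\mu_t$ and disintegrating $\Pi$ with respect to $e_0$ produces, for $\mu_0$-a.e.\ $x$, a probability measure $\Pi_x$ concentrated on curves starting from $x$ with velocity ${\bf b}$. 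Eulerian uniqueness forces $\Pi_x$ to be a Dirac mass on a single curve, thereby defining the map $F_t(x)$. Properties $(1)$ and $(2)$ then follow by construction, the metric speed identity in $(2)$ coming from $|\dot\gamma_t|^2\in L^1$ along $\Pi$-a.e.\ $\gamma$ and the identification of the velocity.

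Third, the semigroup/non-branching property $(4)$ is a soft consequence of the Eulerian uniqueness: both $t\mapsto(F_{t+s})_\sharp\mu_0$ and $t\mapsto(F_t)_\sharp(F_s)_\sharp\mu_0$ solve the continuity equation with the same initial datum $(F_s)_\sharp\mu_0$ (which remains $\leq C\mm$ by $(3)$), hence they coincide. The main obstacle in the whole scheme is the commutator estimate underlying Eulerian uniqueness, because in the non-smooth setting one cannot differentiate ${\bf b}$ classically; this is resolved precisely by Gigli's second-order calculus, which upgrades $\nabla{\bf b}\in L^2$ to an integrability statement strong enough to close the Gronwall loop. Once this step is in place, the remaining arguments are essentially measure-theoretic and carry over from the classical DiPerna-Lions framework.
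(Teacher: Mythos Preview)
The paper does not prove this proposition: it is stated as a citation of Ambrosio--Trevisan \cite{AT-W} and no proof is given in the text. Your proposal is a faithful high-level outline of the Ambrosio--Trevisan strategy (Eulerian well-posedness via heat-semigroup commutator estimates using the $W^{1,2}_C$ regularity of ${\bf b}$, followed by the superposition principle to extract the Lagrangian flow, and then uniqueness/semigroup from Eulerian uniqueness), so there is nothing to compare against here beyond noting that your sketch matches the source the paper defers to.
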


\bigskip

In some potential applications, we do not have the global $L^2+L^\infty$-bound for $|{\bf b}|$, ${\rm div} {\bf b}$ or  global $L^\infty$-bound for $\big ({\rm div} {\bf b}\big )^-$. So we need the following proposition (c.f. Theorem 4.2 \cite{GKKO-R}).

\begin{proposition}\label{prop:rlf-2}
Let ${\bf b}\in W^{1,2}_{C, \loc}(TM)$.  Assume that  $|{\bf b}| \leq C_0 \d(x, x_0)+C_1, \mm$-a.e. for some $C_0, C_1>0, x_0\in X$,  and $| \nabla {\bf b}|_{\rm HS}\in L^2(\Omega) $, $ {\rm div} {\bf b}\in L^2(\Omega)+L^\infty (\Omega)$, $\big({\rm div} {\bf b}\big)^-\in L^\infty (\Omega)$ for any bounded set $\Omega$. Then there exists a unique regular Lagrangian flow associated to the vector field ${\bf b}$.
\end{proposition}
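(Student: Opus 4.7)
The plan is to reduce to Proposition \ref{prop:rlf} by a localization/exhaustion argument, exploiting the fact that the hypotheses on ${\bf b}$ fail to be global only because the bounds on $|{\bf b}|$, $|\nabla{\bf b}|_{\rm HS}$, ${\rm div}\,{\bf b}$ and $({\rm div}\,{\bf b})^-$ are merely local. Fix a terminal time $T>0$ and, borrowing the cut-off functions $\chi_n\in{\rm TestF}_{\rm bs}$ used in the proof of Lemma \ref{lemma:boundedsupport} (so $0\leq\chi_n\leq 1$, $\chi_n\equiv 1$ on $B_n(x_0)$, $\supp\chi_n\subset B_{3n}(x_0)$, $\Lip(\chi_n)\leq 1/n$, and $\Delta\chi_n$ uniformly in $L^\infty$), truncate ${\bf b}$ by ${\bf b}_n:=\chi_n{\bf b}$.

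First I would verify that each ${\bf b}_n$ satisfies the assumptions of Proposition \ref{prop:rlf}. Since $\chi_n$ has bounded support and ${\bf b}$ has linear growth, $|{\bf b}_n|\in L^\infty\subset L^2+L^\infty$. The Leibniz-type identity $\nabla{\bf b}_n=\chi_n\nabla{\bf b}+{\bf b}\otimes\nabla\chi_n$ puts $|\nabla{\bf b}_n|_{\rm HS}$ in $L^2$ because both summands are supported in $B_{3n}(x_0)$, where the local hypotheses on ${\bf b}$ apply. Similarly ${\rm div}({\bf b}_n)=\chi_n\,{\rm div}\,{\bf b}+\la\nabla\chi_n,{\bf b}\ra\in L^2+L^\infty$, with negative part in $L^\infty$ (since on $\supp\chi_n$ both $({\rm div}\,{\bf b})^-$ and ${\bf b}$ are bounded). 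Proposition \ref{prop:rlf} then provides a unique RLF $F^n_t$ associated to ${\bf b}_n$, and because ${\bf b}_n={\bf b}_m$ on $B_n(x_0)$ for every $m\geq n$, the natural goal is to glue the $F^n$ into a single flow for ${\bf b}$.

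The gluing is controlled by a Gronwall bound on the second moment. For $\mu_0=f\mm$ with $f\in L^1\cap L^\infty$ of bounded support, set $\mu^n_t:=(F^n_t)_\sharp\mu_0$ and $m_n(t):=\int\d^2(\cdot,x_0)\,\d\mu^n_t$. Using Proposition \ref{prop-conteq} (applied to Lipschitz truncations of $\d^2(\cdot,x_0)$) together with the linear growth $|{\bf b}|\leq C_0\d(\cdot,x_0)+C_1$, one obtains
\[
m_n'(t)\leq 2\int\d(\cdot,x_0)\bigl(C_0\d(\cdot,x_0)+C_1\bigr)\,\d\mu^n_t\leq C\bigl(m_n(t)+1\bigr),
\]
uniformly in $n$, so $\sup_n\sup_{t\leq T}m_n(t)<\infty$. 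Combined with the $L^\infty$-bound on the densities from Proposition \ref{prop:rlf}(1), this forces the non-escape set $E^n:=\{x:F^n_t(x)\in B_n(x_0)\ \forall t\in[0,T]\}$ to satisfy $\mm(X\setminus E^n)\to 0$. On $E^n$ the curve $t\mapsto F^n_t(x)$ is also an admissible trajectory for ${\bf b}_m$ whenever $m\geq n$, so the uniqueness clause Proposition \ref{prop:rlf}(4) forces $F^m_t=F^n_t$ on $E^n$ modulo $\mm$-null sets. Passing to the $\mm$-a.e. limit and extending arbitrarily on the exceptional null set defines $F_t$, and each RLF axiom for ${\bf b}$ is inherited from the axioms for ${\bf b}_n$ by restriction to $E^n$ and taking $n\to\infty$. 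Uniqueness follows by the same token: any competing RLF $\tilde F$ must coincide with $F^n$ on its own non-escape set, which exhausts $X$ up to a $\mm$-null set.

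The main obstacle is precisely this escape control. Since only linear growth of $|{\bf b}|$ is assumed, one cannot run a pointwise Gronwall on trajectories, so one is forced to work at the level of the pushforward measures $\mu^n_t$ via the continuity equation, uniformly in the truncation parameter $n$; keeping the constants $C_0,C_1,\|({\rm div}\,{\bf b})^-\|_{L^\infty(B_{3n})}$ from inflating with $n$ is the delicate quantitative point. A secondary issue is to choose a measurable pointwise representative of the limit so that the pointwise conditions 1) and 2) of the RLF definition hold everywhere, which is standard once $F_t$ is identified $\mm$-a.e.
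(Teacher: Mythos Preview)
Your truncation and the verification that each $\chi_n{\bf b}$ meets the hypotheses of Proposition~\ref{prop:rlf} mirror the paper. The organization and the escape-control argument, however, differ. The paper fixes an initial $\mu$ with $\supp\mu\subset B_R(x_0)$, chooses a \emph{single} cut-off $\chi$ with $\chi\equiv 1$ on $B_{2R}(x_0)$, and argues directly at the level of trajectories: since $|\chi{\bf b}|\leq|{\bf b}|\leq 2C_0R+C_1$ on $B_{2R}$, the speed identity $|\dot{\bar F}_t|(x)=|\chi{\bf b}|(\bar F_t(x))$ from Proposition~\ref{prop:rlf}(2) forces $\supp\mu_t\subset B_{2R}$ for $t\in[0,\tfrac{R}{2C_0R+C_1}]\supset[0,\tfrac1{2C_0+C_1}]$; iterating this uniform time step covers any $[0,T]$. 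In particular your claim that ``one cannot run a pointwise Gronwall on trajectories'' is mistaken---linear growth of $|{\bf b}|$ together with Proposition~\ref{prop:rlf}(2) is exactly what makes a trajectory-level containment work, and the paper's proof is essentially this.

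Your second-moment route can be completed but has a soft gap as written: the uniform bound $\sup_{n,t}m_n(t)<\infty$ gives, via Chebyshev, only $\mu^n_t(X\setminus B_n)\to 0$ for each fixed $t$; it does not by itself control the running maximum needed for $\mu_0(X\setminus E^n)\to 0$ with $E^n=\{x:F^n_t(x)\in B_n\ \forall t\in[0,T]\}$ (and you wrote $\mm(X\setminus E^n)$, which is even stronger and cannot follow from a second-moment bound for a single $\mu_0$). To close this you would need either a kinetic-energy estimate $\int_0^T\!\int|\dot F^n_t|^2\,d\mu_0\,dt$ (again via linear growth and Proposition~\ref{prop:rlf}(2)) followed by Chebyshev on the maximal displacement, or simply the pointwise Gronwall $\d(F^n_t(x),x_0)\leq (\d(x,x_0)+C_1/C_0)e^{C_0t}$---which brings you back to the paper's shorter argument and obviates the gluing over~$n$.
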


\begin{proof}
Let $\mu \in \mathcal{P}_2(X)$ be an arbitrary measure with bounded density. We assume that $\supp \mu \in B_R(x_0)$ for some $R\geq 1$. Let $\chi$ be a cut-off function in Lemma 
6.7 \cite{AMS-O} such that $\chi$ is Lipschitz and 
\begin{itemize}
\item [a)] $0\leq \chi \leq 1$, $\chi$ supports on $B_{3R}(x_0)$ and $\chi=1$ on $B_{2R}(x_0)$,
\item [b)] $\Delta \chi \in L^\infty$ and $|\D \chi|^2 \in W^{1,2}$.
\end{itemize}
Then we know $|\chi {\bf b}| \in L^2+L^\infty$,
$\nabla (\chi {\bf b})=\chi \nabla {\bf b}+\nabla \chi \otimes {\bf b} \in L^2(TM)\otimes L^2(TM)$, and  $\chi {\bf b} \in {\rm D}({\rm div})$, ${\rm div}(\chi {\bf b})=\la {\bf b},\nabla \chi \ra +\chi {\rm div} {\bf b} \in L^2+L^\infty$,   so that
\[
\| \big ({\rm div}(\chi {\bf b})\big)^-\|_{L^\infty} \leq \| |{\bf b}||\nabla \chi|\|_{L^\infty}+\|\chi \big( {\rm div}{\bf b}\big)^- \|_{L^\infty}<\infty.
\] 

From Proposition \ref{prop:rlf}, we know the regular Lagrangian flow associated to $\chi {\bf b}$ exists and we denote this flow by $\bar{F}_t$. The curve $\mu_t:=(\bar{F}_t)_\sharp \mu$ is the unique solution to the continuity equation
\[
\frac{\d}{\d t} \mu_t+{\rm div}(\chi {\bf b}\mu_t) =0,~~~~~\mu_0=\mu.
\]
In particular, when $\supp \mu_t \subset B_{2R}$, we know $|{\bf b}|(x) \leq 2C_0R +C_1$,  for $\mm$-a.e. $x\in \supp \mu_t$. From $4)$ of Proposition \ref{prop:rlf},  we know $\supp \mu_t \subset B_{2R}$ when $t\in [0, \frac{2R-R}{2C_0R+{C_1}}]$. So
\begin{equation}\label{eq1-prop-rlf}
\frac{\d}{\d t} \mu_t+{\rm div}({\bf b}\mu_t) =0,~~~~~t\in[0, \frac1{2C_0+{C_1}}],~~ \mu_0=\mu.
\end{equation}
Then  for any $T>0$, we can find a solution to the continuity equation \eqref{eq1-prop-rlf} for $t\in [0, T]$ by repeating the  construction above for finite times. It can be seen from the 
construction that 
this solution is unique.

Finally, we can prove the existence  and uniqueness of regular Lagrangian flow using Theorem 8.3 \cite{AT-W} and the proof therein.
\end{proof}

\bigskip

For convenience,  we will not distinguish  the regular Lagrangian flow $(F_t)$  and the curve in Wasserstein space push-forward by $F_t$. We will see in 
Proposition \ref{prop:c1-2} that the curve push-forward by $F_t$ is $C^1$. To prove this result, we firstly recall a useful lemma.

\begin{lemma}[``Weak-strong" convergence, Lemma 5.11 \cite{G-S}]\label{lemma:weakstrong}
Let $\ms$ be an infinitesimally Hilbertian space.  Assume that
\begin{itemize}
\item[i)]   $(\mu_n)\subset\mathcal{P}(X)$  is a sequence of measures with uniformly bounded densities, such that  $\rho_n\to \rho$ $\mm$-a.e. for some probability density  $\rho$, where  $\mu_n:=\rho_n \mm$ and $\mu:=\rho \mm$,
\item[ii)]  $(f_n) \subset W^{1,2}$ is a sequence such that
\[
{\sup}_{n\in \mathbb{N}} \int |\D f_n|^2\,\d\mm <\infty,
\]
and  $f_n \to f$ $\mm$-a.e. for some Borel function $f$.

\end{itemize}
Then for any ${\bf b}\in L^2(TM)$,  we have
\[
\mathop{\lim}_{n\rightarrow \infty}\int \la \nabla f_n, {\bf b}\ra \,\d\mu_n =\int \la \nabla f,  {\bf b}\ra \,\d\mu.
\]

\end{lemma}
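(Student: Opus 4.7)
The plan is to write
\[
\int\la\nabla f_n,{\bf b}\ra\,\d\mu_n-\int\la\nabla f,{\bf b}\ra\,\d\mu = A_n+B_n,
\]
with
\[
A_n := \int\la\nabla f_n,{\bf b}\ra(\rho_n-\rho)\,\d\mm,\qquad B_n := \int\la\nabla f_n-\nabla f,\,\rho\,{\bf b}\ra\,\d\mm,
\]
and show $A_n\to 0$ and $B_n\to 0$ separately. Morally, $A_n$ is controlled by the strong convergence of the measures $\mu_n\to\mu$, whereas $B_n$ expresses the weak $L^2$-convergence of the gradients tested against the fixed field $\rho{\bf b}\in L^2(TM)$.

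For $A_n$, Cauchy--Schwarz yields
\[
|A_n|\le \||\D f_n|\|_{L^2}\cdot\Big(\int |{\bf b}|^2(\rho_n-\rho)^2\,\d\mm\Big)^{1/2}.
\]
The first factor is uniformly bounded by hypothesis. Setting $M:=\sup_n\|\rho_n\|_{L^\infty}$, the integrand on the right is dominated by $4M^2|{\bf b}|^2\in L^1(\mm)$ and tends to $0$ $\mm$-a.e., so dominated convergence gives $A_n\to 0$. For $B_n$, since $\rho$ is bounded, $Y:=\rho\,{\bf b}\in L^2(TM)$, and $B_n$ is the evaluation of a fixed continuous linear functional on $L^2(TM)$ against $\nabla f_n-\nabla f$. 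Since $(\nabla f_n)$ is bounded in $L^2(TM)$, every subsequence admits a further weakly convergent subsequence with some limit $X\in L^2(TM)$, and it suffices to identify $X=\nabla f$.

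To do so I would test against $\nabla\varphi$ for $\varphi\in{\rm TestF}_{\rm bs}(M)$ (bounded-support test functions of Lemma \ref{lemma:boundedsupport} together with $L^\infty$-multipliers generate $L^2(TM)$) and integrate by parts:
\[
\int\la\nabla f_n,\nabla\varphi\ra\,\d\mm = -\int f_n\,\Delta\varphi\,\d\mm.
\]
Since $\Delta\varphi\in L^\infty$ has bounded support, passing to the limit on the right only requires $f_n\to f$ in $L^1$ on $\supp(\Delta\varphi)$, and this is the main obstacle, because our hypotheses control only $\||\D f_n|\|_{L^2}$ and not $f_n$ itself. To overcome it I would localize on a ball $B_R\supset\supp(\Delta\varphi)$, invoke the local Poincar\'e inequality (available in the $\rcd$ setting of the intended applications) to bound the $L^2(B_R)$-oscillation of $f_n$ by $\||\D f_n|\|_{L^2}$, and anchor the averages $(f_n)_{B_R}$ via the $\mm$-a.e.\ convergence $f_n\to f$ at points of $B_R$ where $f$ is finite (extracted by Egorov). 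The resulting uniform integrability of $(f_n)$ on $B_R$, combined with Vitali's theorem, upgrades a.e.\ convergence to $L^1(B_R)$, identifies $X=\nabla f$, and concludes the proof.
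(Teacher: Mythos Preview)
Your route is different from the paper's. The paper does not try to establish weak $L^2(TM)$-convergence $\nabla f_n\rightharpoonup\nabla f$ from scratch; it takes the case ${\bf b}=\nabla g$ as a black box from Lemma~5.11 of \cite{G-S} and then reduces a general ${\bf b}\in L^2(TM)$ to that case by approximating ${\bf b}$ with a test vector field $v_\epsilon=\sum_i a_i\nabla g_i$ and bounding the remainder via
\[
\Big|\int\la\nabla f_n,{\bf b}-v_\epsilon\ra\,\d\mu_n\Big|\le \|\rho_n\|_{L^\infty}\,\||\D f_n|\|_{L^2}\,\|{\bf b}-v_\epsilon\|_{L^2(TM)}=O(\epsilon).
\]
Your $A_n$ term is handled cleanly and matches this in spirit. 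Your $B_n$ term, however, is exactly the assertion that $\nabla f_n\rightharpoonup\nabla f$ weakly in $L^2(TM)$ (tested against $\rho{\bf b}$, which ranges over all of $L^2(TM)$), so you are essentially re-deriving the cited lemma rather than using it. Both arguments ultimately rely on the density of test vectors in $L^2(TM)$---you need it to conclude $X=\nabla f$ from the identities $\int\la X,\nabla\varphi\ra\,\d\mm=\int\la\nabla f,\nabla\varphi\ra\,\d\mm$, while the paper needs it for the approximation $v_\epsilon$. The paper's packaging is shorter and avoids the integration-by-parts detour.

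One caution on your $B_n$ argument: it invokes a local Poincar\'e inequality, which the lemma as stated does not assume (only infinitesimal Hilbertianity). You correctly note it is available in the $\rcd$ applications, but your proof then covers a narrower statement than the one written. Also, the ``anchoring via Egorov'' step is right in spirit but should be spelled out: once Egorov produces a set $E\subset B_R$ of positive measure on which $(f_n)$ is uniformly bounded, combine $\|f_n-(f_n)_{B_R}\|_{L^2(E)}\le C$ with $\|f_n\|_{L^2(E)}\le C'$ to get $|(f_n)_{B_R}|\,\mm(E)^{1/2}\le C+C'$, hence the averages are bounded and $(f_n)$ is bounded in $L^2(B_R)$; Vitali then gives $f_n\to f$ in $L^1(B_R)$ as you claim.
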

\begin{proof}
If ${\bf b}=\nabla g$ for some $g\in W^{1,2}$,  the assertion has been proved in Lemma 5.11 \cite{G-S}. For any $\epsilon >0$, we can find $v_\epsilon \in {\rm TestV}$ with $v_\epsilon=\sum_i^N a_i \nabla g_i$ such that $\|{\bf b}-v_\epsilon\|_{L^2(TM)} <\epsilon$. Then we have
\begin{eqnarray*}
\mathop{\lims}_{n\rightarrow \infty}\int \la \nabla f_n, {\bf b}\ra \,\d\mu_n &\leq & \mathop{\lims}_{n\rightarrow \infty}\int \la \nabla f_n, {\bf b}-v_\epsilon \ra\,\d\mu_n+ \mathop{\lims}_{n\rightarrow \infty}\int \la\nabla f_n, v_\epsilon \ra\,\d\mu_n\\
&\leq&  \mathop{\lims}_{n\rightarrow \infty}\sum_i^N\int \la \nabla f_n, \nabla g_i\ra \,a_i\d\mu_n+{\rm O}(\epsilon)\\
&=&  \sum_i^N\int \la \nabla f, \nabla g_i\ra  \,a_i\d\mu+{\rm O}(\epsilon)\\
&\leq& \int \la \nabla f,  {\bf b}\ra \,\d\mu+{\rm O}(\epsilon).
\end{eqnarray*}
Letting $\epsilon \to 0$ and considering the opposite inequality we prove the assertion.
\end{proof}

\bigskip

\begin{proposition}\label{prop:c1-2}
Let $(F_t)$ be a regular Lagrangian flow  associated to ${\bf b}\in W^{1,2}_{C, \loc}$. Assume that $\mu_0$ has bounded density and bounded support. Then $\mu_t:=(F_t)_\sharp \mu_0$   is a $C^1$ curve (see also Proposition \ref{prop:c1}). 
\end{proposition}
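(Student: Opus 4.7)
The plan is to establish the $C^1$ regularity in the weak sense of Proposition \ref{prop:c1}, namely that $t\mapsto \int f\,d\mu_t$ is $C^1$ on $[0,T]$ for every $f\in W^{1,2}\ms$. By item $2)$ of the definition of regular Lagrangian flow combined with Fubini's theorem, this map is already absolutely continuous with
\[
\frac{d}{dt}\int f\,d\mu_t \;=\; \int \la \nabla f, {\bf b}\ra\,d\mu_t\qquad\text{for a.e.\ }t,
\]
so the task reduces to showing that the right-hand side depends continuously on $t$; identifying the continuous representative then upgrades absolute continuity to $C^1$.

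I would first gather the a priori estimates needed to localize the integral. The definitional property $(F_t)_\sharp\mm\leq C_0\mm$ together with the bounded density of $\mu_0$ yields a uniform $L^\infty$-bound on $\rho_t:=d\mu_t/d\mm$. A short-time Grönwall argument in the spirit of Proposition \ref{prop:rlf-2}, combined with the semigroup property (item $4)$ of Proposition \ref{prop:rlf}), gives a uniform support bound $\supp\mu_t\subset B_R$ on a fixed interval $[0,T]$. Choosing a test cutoff $\chi$ equal to $1$ on $B_R$ and supported in $B_{2R}$, one has $\chi{\bf b}\in L^2(TM)$ together with the identity $\int \la \nabla f, {\bf b}\ra\,d\mu_t = \int\la\nabla f,\chi{\bf b}\ra\,d\mu_t$ valid for all such $t$.

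Given $t_0\in[0,T]$ and any sequence $t_n\to t_0$, I would extract a subsequence and apply Lemma \ref{lemma:weakstrong}. Continuity of $s\mapsto F_s(x)$ for every $x\in X$ together with dominated convergence gives $\mu_{t_n}\to\mu_{t_0}$ weakly. Passing to a subsequence $(t_{n_k})$ along which $\rho_{t_{n_k}}\to \rho_{t_0}$ $\mm$-a.e., Lemma \ref{lemma:weakstrong} applied with the constant sequence $f_n\equiv f\in W^{1,2}$, the densities $\rho_{t_{n_k}}$, and the globally square-integrable vector field $\chi{\bf b}$ produces
\[
\int\la\nabla f,\chi{\bf b}\ra\,d\mu_{t_{n_k}}\;\longrightarrow\; \int\la\nabla f,\chi{\bf b}\ra\,d\mu_{t_0}.
\]
Since the limit does not depend on the subsequence, the full sequence converges, which together with the absolutely continuous derivative formula from the first paragraph establishes $C^1$ regularity.

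The principal obstacle is the extraction of the $\mm$-a.e.\ convergent subsequence of densities, because weak convergence of $(\mu_{t_n})$ together with uniform $L^\infty$-bounds only gives weak-$\ast$ convergence of densities in $L^\infty$. To overcome this I would exploit the semigroup identity $\mu_{t_n}=(F_{t_n-t_0})_\sharp\mu_{t_0}$ to reduce to the case $t_0=0$, and then combine the $W_2$-control $W_2^2(\mu_s,\mu_0)\leq s\int_0^s\int|{\bf b}|^2\,d\mu_r\,dr$ with the quantitative $L^2$-stability bound in item $3)$ of Proposition \ref{prop:rlf}; together with standard compactness on the bounded set $B_{2R}$, these should yield $L^1_\loc$-convergence of $\rho_s$ to $\rho_0$ as $s\to 0$ and hence, along a further subsequence, $\mm$-a.e.\ convergence, in parallel with the argument used for geodesics in Proposition \ref{prop:c1}.
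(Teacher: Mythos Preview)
Your overall architecture matches the paper's proof: reduce $C^1$ regularity to continuity of $t\mapsto\int\la\nabla f,{\bf b}\ra\,\d\mu_t$, localize via a cutoff so that ${\bf b}$ may be taken in $L^2(TM)$, use the semigroup property to reduce to $t_0=0$, and feed an $\mm$-a.e.\ convergent subsequence of densities into Lemma~\ref{lemma:weakstrong}. The only substantive issue is the last paragraph, where you propose to extract the $\mm$-a.e.\ convergent subsequence.

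The appeal to ``standard compactness on the bounded set $B_{2R}$'' does not close the gap. A sequence $(\rho_s)$ that is uniformly bounded in $L^\infty$, supported in a fixed bounded set, and converging weakly (or in $W_2$) need not converge strongly in $L^1$; Dunford--Pettis only yields weak $L^1$-compactness, and weak $L^2$-convergence certainly does not give a.e.\ convergence along subsequences. The $W_2$-estimate you write down and the $L^2$-stability bound $\|\rho_t\|_{L^2}\le e^{C_1 t}\|\rho_0\|_{L^2}$ from Proposition~\ref{prop:rlf}~3) are both relevant, but by themselves they only produce an \emph{upper} bound $\lims_{t\to 0}\|\rho_t\|_{L^2}\le\|\rho_0\|_{L^2}$, which is not enough.

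The missing idea, and what the paper actually does, is to pair that upper bound with the matching lower bound coming from lower semicontinuity of the functional $\mu\mapsto\int\big(\tfrac{\d\mu}{\d\mm}\big)^2\,\d\mm$ with respect to $W_2$-convergence. This yields $\|\rho_t\|_{L^2}\to\|\rho_0\|_{L^2}$. Since one already has weak $L^2$-convergence $\rho_t\weakto\rho_0$ (from weak convergence in duality with $C_b(X)$ plus the uniform $L^2$-bound), the Radon--Riesz property of Hilbert spaces upgrades this to strong $L^2$-convergence, hence strong $L^p$-convergence for every $p\in[1,\infty)$; the semigroup property then gives $L^1$-continuity of $t\mapsto\rho_t$ on the whole interval, and a.e.\ convergent subsequences follow. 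Once you insert this norm-convergence argument in place of the vague compactness claim, your proof is complete and coincides with the paper's.
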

\begin{proof}
Let $ \mu_t:=(F_t)_\sharp \mu_0$ be a RLF with $\rho_t:=\frac{\d \mu_t }{\d \mm}$ uniformly bounded in $t$. By $3)$ of Proposition \ref{prop:rlf} we have 
$$\lmts{t}{0} \| \rho_t\|_{L^2} \leq \|\rho_0\|_{L^2}.
$$
 It is known that the functional $\mathcal{P}_2(X) \ni \mu \mapsto  \int \big (\frac{\d \mu }{\d \mm}  \big)^2\,\d \mm$ is lower semi-continuous in Wasserstein space (cf. \cite{AGS-G}). So the function $t \mapsto \| \rho_t\|_2$ is lower semi-continuous. Then we have $\lmt{t}{0} \| \rho_t\|_{L^2} = \|\rho_0\|_{L^2}$.

Since $\rho_t \to \rho_0$ weakly in duality with $C_b(X)$ and $(\rho_t) $ is uniformly bounded in $L^2$. We know  that $\rho_t \to \rho_0$ weakly in $L^2(X, \mm)$.  Combining with $\lmt{t}{0} \| \rho_t\|_{L^2} = \|\rho_0\|_{L^2}$ we know $\rho_t \to \rho_0$ in $L^2$ strongly,  and in $L^p$ strongly for any $p\in [1, \infty)$.

From semi-group property,  we know $t \mapsto \rho_t$ is  continuous in $L^1$. So for any $t, (t_n)_n \geq 0$ with $t_n \to t$, we know there exists a subsequence $(t_{n_k})_k$ such that $\rho_{t_{n_k}} \to \rho_t$ $\mm$-a.e.  as $k \to \infty$.  By Lemma \ref{lemma:weakstrong} we can prove the continuity of  the function
\[
[0, T] \ni r\mapsto \int \la \nabla f,  {\bf b}\ra\,\d \mu_{r}.
\] So  $(\mu_t)$ is a $C^1$ curve.
\end{proof}

\bigskip

The following simple lemma is a complement to Proposition \ref{prop:rlf}.
 
\begin{lemma}\label{lemma-rlf-1}
Let $f\in W^{1,2}$, ${\bf b} \in L^2_\loc(TM)$. We assume that $(F_t)_t$ is a regular Lagrangian flow associated to ${\bf b}$. If $f\circ F_t \in W^{1,2}$  for any $t>0$. Then for all $t\in [0,T]$,
\[
\la {\bf b}, \nabla f \ra\circ F_t(x)= \la {\bf b}, \nabla (f\circ F_t)\ra (x),~~\mm-\text{a.e.}~x\in X.
\]
\end{lemma}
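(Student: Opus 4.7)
The plan is to deduce the identity by combining two applications of the regular Lagrangian flow property---once with the function $f$ at time $t+s$, and once with the function $g:=f\circ F_t\in W^{1,2}$ at time $s$---and identifying the two resulting derivatives through the semigroup property $F_{t+s}=F_t\circ F_s$. Property $2)$ of the flow applied to $f$ gives, for $L^1\times\mm$-a.e.\ $(s,x)$ with $s\in[0,T-t]$,
\[
\frac{\d}{\d s}(f\circ F_{t+s})(x)=\la{\bf b},\nabla f\ra\circ F_{t+s}(x),
\]
while applied to the Sobolev function $g=f\circ F_t$ it yields $\frac{\d}{\d s}(g\circ F_s)(x)=\la{\bf b},\nabla g\ra\circ F_s(x)$ for $L^1\times\mm$-a.e.\ $(s,x)$. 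By property $4)$ of Proposition \ref{prop:rlf} (semigroup/uniqueness), for each fixed $s$ we have $F_t\circ F_s(x)=F_{t+s}(x)$ for $\mm$-a.e.\ $x$, so $g\circ F_s(x)=f\circ F_{t+s}(x)$ for $\mm$-a.e.\ $x$. A Fubini argument shows that, for $\mm$-a.e.\ $x$, the two curves $s\mapsto f\circ F_{t+s}(x)$ and $s\mapsto g\circ F_s(x)$ coincide as elements of $W^{1,1}([0,T-t])$; their distributional derivatives agree, and we obtain
\begin{equation}\label{eq:planAeq}
\la{\bf b},\nabla f\ra\circ F_{t+s}(x)=\la{\bf b},\nabla (f\circ F_t)\ra\circ F_s(x)\qquad L^1\times\mm\text{-a.e.}\ (s,x).
\end{equation}

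It remains to pass to the limit $s\downarrow 0$. Testing \eqref{eq:planAeq} against an arbitrary $\psi\in L^\infty(\mm)$ with bounded support gives, for $L^1$-a.e.\ $s\in(0,T-t)$,
\[
\int \psi\cdot \la{\bf b},\nabla f\ra\circ F_{t+s}\,\d\mm=\int \psi\cdot \la{\bf b},\nabla(f\circ F_t)\ra\circ F_s\,\d\mm.
\]
The left-hand side can be rewritten as $\int h\,\d\nu_s$ with $h:=\la{\bf b},\nabla f\ra\in L^1_\loc(\mm)$ and $\nu_s:=(F_{t+s})_\sharp(\psi\mm)$; property $1)$ of the flow gives pointwise continuity $F_{t+s}(x)\to F_t(x)$, so $\nu_s\to\nu_0:=(F_t)_\sharp(\psi\mm)$ narrowly (dominated convergence), while property $3)$ provides the uniform $L^\infty$-bound $\d\nu_s/\d\mm\leq C\|\psi\|_\infty$. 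The right-hand side is treated identically with $h$ replaced by $\la{\bf b},\nabla(f\circ F_t)\ra\in L^1_\loc(\mm)$. Right-continuity at $s=0$ of both sides then yields
\[
\int\psi\cdot\la{\bf b},\nabla f\ra\circ F_t\,\d\mm=\int\psi\cdot\la{\bf b},\nabla(f\circ F_t)\ra\,\d\mm,
\]
and arbitrariness of $\psi$ gives the claimed pointwise $\mm$-a.e.\ identity.

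The main technical obstacle is justifying the passage to the limit $s\downarrow 0$ when the integrand $h$ is only locally integrable rather than continuous or globally integrable. The crux is a truncation: for each $x$ in the bounded set $\supp\psi$, the compact-interval continuity $s\mapsto F_{t+s}(x)$ on $[0,\varepsilon]$ guarantees $R(x):=\sup_{s\in[0,\varepsilon]}\d(F_{t+s}(x),x_0)<\infty$, so the sublevel sets $A_R:=\{x\in\supp\psi:R(x)\leq R\}$ exhaust $\supp\psi$ in measure. On $A_R$ all the measures $(F_{t+s})_\sharp(\psi\mm|_{A_R})$ are supported in $B_R(x_0)$, $h\in L^1(B_R)$ may be approximated in $L^1(B_R)$ by continuous bounded functions, for which narrow convergence plus the uniform density bound from bounded compression yields convergence of the integrals; the remainder on $\supp\psi\setminus A_R$ is controlled by the uniform $L^\infty$-density estimate combined with $\mm(\supp\psi\setminus A_R)\to 0$. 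This is the only delicate point; everything else follows routinely from the definition of regular Lagrangian flow and the semigroup identity.
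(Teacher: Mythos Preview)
Your core idea coincides with the paper's: compute the time derivative of $s\mapsto f\circ F_{t+s}$ in two ways via the semigroup identity and equate the results. The paper, however, carries this out at the integrated level from the start---it fixes an arbitrary $\mu_0\in\mathcal P(X)$ with bounded density and bounded support, writes $\mu_t=(F_t)_\sharp\mu_0$, and uses Proposition~\ref{prop:c1-2} (the $C^1$ regularity of $t\mapsto\mu_t$) to pass from the a.e.\ formula $\frac{\d}{\d t}\int f\,\d\mu_t=\int\la{\bf b},\nabla f\ra\circ F_t\,\d\mu_0$ to the same formula \emph{at every} $t$; then the semigroup gives $\frac{\d}{\d h}\big|_{h=0}\int f\,\d\mu_{t+h}=\frac{\d}{\d h}\big|_{h=0}\int f\circ F_t\,\d\mu_h=\int\la{\bf b},\nabla(f\circ F_t)\ra\,\d\mu_0$. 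Arbitrariness of $\mu_0$ yields the claim. The point is that the passage ``a.e.\ $\Rightarrow$ every $t$'' is handled by Proposition~\ref{prop:c1-2}, which in turn rests on the $L^2$ continuity of the densities and Lemma~\ref{lemma:weakstrong}.

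Your direct $s\downarrow 0$ argument tries to avoid Proposition~\ref{prop:c1-2}, but the remainder estimate has a gap. With $h=\la{\bf b},\nabla f\ra\in L^1_\loc$ only, the two ingredients you invoke---the uniform density bound $(F_{t+s})_\sharp\mm\leq C\mm$ and $\mm(\supp\psi\setminus A_R)\to 0$---do \emph{not} control $\int_{\supp\psi\setminus A_R}|h|\circ F_{t+s}\,\d\mm$. The pushforward $(F_{t+s})_\sharp(\mm|_{\supp\psi\setminus A_R})$ indeed has small total mass and density $\leq C$, but nothing forces it to be supported in any fixed bounded set, and on an unbounded set $|h|$ need not be integrable; small mass plus bounded density gives absolute continuity of the integral only against $h\in L^1(\mm)$, which you do not have. (If one assumes $|{\bf b}|\in L^2(\mm)$ globally, so that $h\in L^1(\mm)$, your argument does go through; but the lemma is stated for ${\bf b}\in L^2_\loc$.) The paper's route via Proposition~\ref{prop:c1-2} is precisely what packages this continuity cleanly.
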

\begin{proof}
Let $\mu_0 \in \mathcal{P}(X)$ be an arbitrary measure with bounded density and bounded support. We define  $\mu_t=(F_t)_\sharp \mu_0, t>0$. From the definition of continuity equation and Proposition \ref{prop:rlf},  we know 
\[
\frac{\d }{\d t} \int f \,\d \mu_t=\int \la {\bf b}, \nabla f \ra \,\d \mu_t=\int \la {\bf b}, \nabla f \ra\circ F_t \,\d \mu_0
\]
for a.e. $t\in [0,T]$. From Proposition \ref{prop:c1-2} above we know this formula holds for all $t$.
Meanwhile, since  $f\circ F_{t+h} \in W^{1,2}$ for any $h>0$, we know
\begin{eqnarray*}
\frac{\d }{\d h} \int f \,\d \mu_{t+h}\restr{h=0}&=&\frac{\d }{\d h} \int f\circ F_t \,\d \mu_{h}\restr{h=0}\\
&=& \int \la {\bf b}, \nabla (f\circ F_t) \ra \,\d \mu_0.
\end{eqnarray*}

Then we have
\[
\int \la {\bf b}, \nabla (f\circ F_t) \ra \,\d \mu_0=\int \la {\bf b}, \nabla f \ra\circ F_t \,\d \mu_0.
\]
As $\mu_0$ is arbitrary, we know $\la {\bf b}, \nabla f \ra\circ F_t= \la {\bf b}, \nabla (f\circ F_t)\ra $, $\mm$-a.e..
\end{proof}

\subsection{$K$-convexity and $K$-monotonicity}
First of all, we introduce some notions and concepts to characterize the convexity of functions, and the monotonicity of vector fields in non-smooth setting.

The first one is a zero order characterization. 

\begin{definition}[Weak $K$-convexity]\label{def:convex}
Let $u\in L^1_{\rm loc}(X, \mm)$. We define the functional $U(\cdot): \mathcal{P}_2(X) \ni \mu \mapsto \R\cup \{+\infty\}$  by
\begin{equation*}
U(\mu):=
\left \{\begin{array}{ll}
 \int_X u\,\d \mu  ~~~~~~~~~~~\text{if}~~\mu\ll \mm,\\
+\infty  ~~~~~~~~~~~~~~~\text{otherwise}.
\end{array}\right.
\end{equation*}

 We say that $u$ is {\sl weakly $K$-convex} if the functional $U(\cdot)$ is $K$-convex on Wasserstein space:
\begin{equation}\label{def1-eq}
U(\mu_t) \leq (1-t)U(\mu_0)+tU(\mu_1)-\frac{K}{2}(1-t)tW^2_2(\mu_0, \mu_1)
\end{equation}
for any $t\in [0,1]$ along any geodesic $(\mu_t) \subset (\mathcal{P}_2, W_2)$,  where $\mu_0, \mu_1$ have bounded densities and bounded supports.
\end{definition}

\bigskip

The second one is a first order characterization.

\begin{definition}[$K$-monotonicity]\label{def:mono}
We say that a vector field ${\bf b}\in L^2_\loc(TM)$ is {\sl $K$-monotone} if
\[
\int \la {\bf b}, \nabla \varphi \ra\,\d \mu^1+\int \la {\bf b}, \nabla (\varphi)^c \ra\,\d \mu^2 \geq K W^2_2(\mu^1,\mu^2)
\]
for any $\mu^1, \mu^2 \in \mathcal{P}_2(X)$ with bounded densities and bounded supports, where $(\varphi, \varphi^c)$ is a couple of Kantorovich potentials relative to $(\mu^1, \mu^2)$.
\end{definition}

\begin{remark}
From the locality  property of Kantorovich potentials in the following Proposition \ref{prop:brenier}, we know that the $K$-monotonicity is independent of the choice of $(\varphi, \varphi^c)$, so this concept is well-defined.

If ${\bf b}\in L^2(TM)$, by the locality of Kantorovich potentials again, we can replace the  condition ``$\mu^1, \mu^2 \in \mathcal{P}_2$ with bounded supports and bounded densities" in Definition \ref{def:mono} by ``bounded densities". 

Similarly, by metric Brenier's theorem we can rephrase Definition \ref{def:convex} in the following way: for any $\mu_0, \mu_1\in \mathcal{P}_2(X)$ with bounded densities and bounded supports, there exists a geodesic $(\mu_t)_t \subset (\mathcal{P}_2, W_2)$ connecting $\mu_0, \mu_1$ such that the inequality \eqref{def1-eq} holds.
\end{remark}

\begin{proposition}[Metric Brenier's theorem, \cite{AGS-M, RS-N}]\label{prop:brenier}
Let $\ms$ be a $\rcd$ metric measure  space. Assume that  $\mu,\nu\in \mathcal{P}_2$ are absolutely continuous with respect to $\mm$. Let $\varphi$ be a Kantorovich potential relative to $(\mu, \nu)$. Then the geodesic   connecting $\mu$ and $\nu$ is unique.  The lifting  $\Pi$  of this geodesic   $(\mu_t)$  is  induced by a map and $\Pi$ concentrates on a  set of non-branching geodesics. Moreover, for $\Pi$-a.e. $\gamma \in \geo(X)$ we have 
\[
\d(\gamma_0,\gamma_1)=\lip{\varphi}(\gamma_0)=|\D\varphi|(\gamma_0).
\]
In particular,  we have
\[
W_2(\mu, \nu)= \sqrt{\int |\D\varphi|^2  \, \d\mu}.
\]

Furthermore,  we have the locality property of Kantorovich potentials: 
\[
|\D(\varphi-\bar{\varphi})|=0~~~~\mm-\text{a.e. on}~\supp \mu
\]
for any  $\varphi, \bar{\varphi}$ which are  Kantorovich potentials from $\mu$ to $\nu$.
\end{proposition}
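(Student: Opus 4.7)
The plan is to combine the essentially non-branching property of $\rcd$ spaces (Rajala--Sturm) with a Hopf--Lax-based derivation of the metric Brenier identity (Ambrosio--Gigli--Savar\'e). First I would establish uniqueness of the Wasserstein geodesic together with its map representation. Existence of at least one geodesic is standard from the $\cd$ condition. To promote this to uniqueness, I invoke essential non-branching of $\rcd$ spaces: any optimal dynamical plan $\Pi$ between absolutely continuous measures is concentrated on a Borel set of pairwise non-branching geodesics and is induced by a map. Combined with strict convexity of the entropy ${\rm Ent}_\mm$ along Wasserstein geodesics---which itself follows from $\cd$ together with infinitesimal Hilbertianity---the coexistence of two distinct optimal plans is ruled out, so both $\Pi$ and the geodesic $(\mu_t)=(e_t)_\sharp \Pi$ are unique.

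Next, for the metric Brenier identity I would fix a Kantorovich potential $\varphi$ and the unique lifting $\Pi$. For $\Pi$-a.e.\ $\gamma$ the equality $\varphi(\gamma_0)+\varphi^c(\gamma_1)=\tfrac12 \d^2(\gamma_0,\gamma_1)$ holds, while $c$-concavity yields
\[
\varphi(y)-\varphi(\gamma_0)\leq \tfrac12 \d^2(y,\gamma_1)-\tfrac12 \d^2(\gamma_0,\gamma_1)
\]
for every $y\in X$. Dividing by $\d(y,\gamma_0)$ and letting $y\to \gamma_0$ gives $\lip{\varphi}(\gamma_0)\leq \d(\gamma_0,\gamma_1)$, hence after integration $\int \lip{\varphi}^2\,\d\mu\leq W_2^2(\mu,\nu)$. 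For the opposite inequality I would use the Hopf--Lax evolution: by Proposition \ref{prop-1}, $t\Q_t(-\varphi)$ is a Kantorovich potential from $\mu_t$ to $\mu_0$, so combining Proposition \ref{prop:derw2} with the Hamilton--Jacobi equation of Lemma \ref{lemma-hj1} produces
\[
\tfrac{\d}{\d t}\tfrac12 W_2^2(\mu_t,\nu)\big|_{t=0^+} = -\int |\D \varphi|^2\,\d\mu.
\]
Since $(\mu_t)$ is a constant-speed geodesic the left-hand side equals $-W_2^2(\mu,\nu)$, whence $\int|\D\varphi|^2\,\d\mu\geq W_2^2(\mu,\nu)$. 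Using $|\D\varphi|\leq \lip{\varphi}$ $\mm$-a.e., the chain collapses to equalities and $\d(\gamma_0,\gamma_1)=\lip{\varphi}(\gamma_0)=|\D\varphi|(\gamma_0)$ for $\Pi$-a.e.\ $\gamma$, giving also $W_2^2(\mu,\nu)=\int|\D\varphi|^2\,\d\mu$.

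For the locality claim, given two Kantorovich potentials $\varphi,\bar\varphi$ the previous identity yields $\int|\D\varphi|^2\,\d\mu=\int|\D\bar\varphi|^2\,\d\mu=W_2^2(\mu,\nu)$. The half-sum $\tilde\varphi:=\tfrac12(\varphi+\bar\varphi)$ is still admissible in the dual \eqref{eq:dual} (after $c$-concavification) and realizes the supremum by linearity, so $\int|\D\tilde\varphi|^2\,\d\mu\geq W_2^2(\mu,\nu)$; combining with $|\D\tilde\varphi|\leq \tfrac12(|\D\varphi|+|\D\bar\varphi|)$ $\mm$-a.e.\ and strict convexity of $t\mapsto t^2$ forces $|\D\varphi|=|\D\bar\varphi|$ $\mu$-a.e. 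The parallelogram identity for weak gradients in the infinitesimally Hilbertian setting then upgrades to $|\D(\varphi-\bar\varphi)|=0$ $\mm$-a.e.\ on $\supp\mu$. The main obstacle in this plan is the reverse inequality $\int|\D\varphi|^2\,\d\mu\geq W_2^2$: it requires identifying the velocity of the Wasserstein geodesic with $-\nabla\varphi$ via the continuity equation of Proposition \ref{prop:geod} and sharply controlling the one-sided derivative of $t\mapsto W_2^2(\mu_t,\nu)$ at $t=0^+$ through the Hamilton--Jacobi flow.
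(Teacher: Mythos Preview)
The paper does not prove this proposition at all: it is stated in the Preliminaries section as a known result, with citations to \cite{AGS-M} and \cite{RS-N}, and no proof is given. So there is nothing in the paper to compare your argument against.

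That said, your outline follows the strategy of the cited references fairly closely: essential non-branching from Rajala--Sturm for the map/uniqueness part, and the Hopf--Lax/Hamilton--Jacobi machinery from Ambrosio--Gigli--Savar\'e for the identity $\d(\gamma_0,\gamma_1)=\lip{\varphi}(\gamma_0)=|\D\varphi|(\gamma_0)$. One caution: you invoke Propositions~\ref{prop:derw2} and~\ref{prop:geod} to obtain the reverse inequality $\int|\D\varphi|^2\,\d\mu\geq W_2^2(\mu,\nu)$, but both of those results (from \cite{GH-C}) assume bounded compression of the curve and, more importantly, sit logically \emph{downstream} of the metric Brenier theorem in the development of \cite{AGS-M,GH-C}. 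Using them here risks circularity. The original argument in \cite{AGS-M} obtains the reverse inequality more directly, by differentiating $t\mapsto \int \Q_t(-\varphi)\,\d\mu_t$ along the geodesic and using the upper gradient property of $\lip{\Q_t(-\varphi)}$ together with the superposition principle (Proposition~\ref{lift}), without appealing to the continuity-equation framework. A second technical point: in your locality argument you need $\int|\D\tilde\varphi|^2\,\d\mu\geq W_2^2$ for the half-sum $\tilde\varphi$, but the Brenier identity applies to the $c$-concavification $\tilde\varphi^{cc}$, not to $\tilde\varphi$ itself; passing from $|\D\tilde\varphi^{cc}|$ to $|\D\tilde\varphi|$ on $\supp\mu$ requires an extra step (showing $\tilde\varphi=\tilde\varphi^{cc}$ $\mu$-a.e.\ and then arguing via local slopes at contact points).
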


\bigskip

Next, we introduce the concept of {\sl infinitesimal $K$-monotonicity} of a vector field ${\bf b}\in W^{1,2}_{C,\loc}(TM)$, which is a second order characterization.  We recall  that the Hessian of a test function $f$ can be defined by
\[
2\H_f(\nabla g_1, \nabla g_2)=\la \nabla \la \nabla f, \nabla g_1\ra, \nabla g_2\ra+\la \nabla \la \nabla f, \nabla g_2 \ra, \nabla g_1 \ra-\la \nabla \la \nabla g_1, \nabla g_2\ra, \nabla f  \ra,
\]
and the covariant derivative of a vector field ${\bf b}\in W^{1,2}_{C,\loc}(TM)$ can be equivalently defined by
\[
\nabla {\bf b}:(\nabla g_1 \otimes \nabla g_2)=\la \nabla \la {\bf b}, \nabla g_2\ra, \nabla g_1\ra-\H_{g_2}(\nabla g_1, {\bf b}),
\]
where $g_1, g_2 \in {\rm TestF}$.

\begin{definition}[Infinitesimal $K$-monotonicity]\label{def:infmono}
Let ${\bf b} \in W^{1,2}_{C, \loc}(TM)$ be a vector field. We say that ${\bf b}$ is {\sl infinitesimally $K$-monotone} if
\[
\nabla^s {\bf b}:(X \otimes X)=\nabla {\bf b}: (X \otimes X)  \geq K|X|^2~~~~\mm-\text{a.e.}
\]
for any $X \in L^2(TM)$. 
\end{definition}

\bigskip
\begin{definition}[Infinitesimal $K$-convexity]
We say that $f$ is {\sl infinitesimally $K$-convex} if   $\nabla f\in W^{1,2}_{C, \loc}(TM)$ and $\nabla f$ is infinitesimally $K$-monotone. In other words, $f$ is infinitesimally $K$-convex if $f\in W^{2,2}_\loc$ and  $\H_f(\nabla g, \nabla g) \geq K|\D g|^2$ for any $g\in {\rm TestF}$.
\end{definition}

\bigskip

Next we  prove the first theorem in this article.  When $u\in {\rm TestF}$, this  result  has been proved  in Theorem 7.1 \cite{K-O}  (see also Lemma 2.1  \cite{LierlSturm2018},  Theorem 3.3  \cite{GKKO-R}).  In the following Theorem \ref{mainth-0}, thanks to the recent results on second order differential structure of metric measure space (cf. \cite{G-N}), we can remove some bounds on $u, \nabla u$,  and the condition $\Delta u\in W^{1,2}$ in the former proofs.

\begin{theorem}\label{mainth-0}
Let $M:=\ms$ be a $\rcd$ metric measure space, $u\in W^{2,2}_\loc\ms$. Assume  further that $u\in L^\infty_\loc(M)$ and $u(x) \geq -a-b \d^2(x, x_0)$ for some $a, b \in \R$, $x_0 \in X$.
Then the following are equivalent:
\begin{itemize}
\item [i)] $u$ is infinitesimally $K$-convex.
\item [ii)] $u$ is weakly $K$-convex.
\end{itemize}

\end{theorem}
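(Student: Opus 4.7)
The plan is to prove both implications via the formal identity
\[
\frac{d^2}{dt^2}\,U(\mu_t) \;=\; \int \Hess_u(\nabla\phi_t,\nabla\phi_t)\,d\mu_t
\]
along a Wasserstein geodesic $(\mu_t)$ with velocity field $\nabla\phi_t$. Once this is in hand, the $K$-convexity of $t\mapsto U(\mu_t)$ and the infinitesimal $K$-convexity of $u$ become manifestly equivalent, modulo density and localization arguments.

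\textbf{Direction (i) $\Rightarrow$ (ii).} I would fix $\mu_0,\mu_1$ with bounded densities and bounded supports and take $(\mu_t)$ to be the unique geodesic joining them (Proposition \ref{prop:brenier}). By Proposition \ref{prop:geod} the continuity equation has velocity $\nabla\phi_t$ with $\phi_t:=-\Q_{1-t}(-\varphi^c)$ for a Kantorovich potential $\varphi$. Applying Proposition \ref{prop-conteq} to (a localization of) $u$ gives
\[
\frac{d}{dt}\,U(\mu_t) \;=\; \int \la \nabla u, \nabla\phi_t\ra\,d\mu_t
\]
for a.e.\ $t$. Differentiating once more, the metric Hamilton--Jacobi equation from Lemma \ref{lemma-hj1} yields $\nabla\partial_t\phi_t = -\tfrac12\nabla|\nabla\phi_t|^2$, which cancels exactly half of the term produced by the evolving $\phi_t$; the defining formula \eqref{eq:hessian} for $\Hess_u$, as extended to $W^{2,2}_\loc$ via Proposition \ref{prop:close}, then reorganizes the remainder into $\int \Hess_u(\nabla\phi_t,\nabla\phi_t)\,d\mu_t \geq K\int|\nabla\phi_t|^2\,d\mu_t = K\,W_2^2(\mu_0,\mu_1)$, which is the desired $K$-convexity of $t\mapsto U(\mu_t)$.

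\textbf{Direction (ii) $\Rightarrow$ (i).} By Proposition \ref{coro:hessianbound} combined with Lemma \ref{lemma:boundedsupport}, it suffices to prove $\Hess_u(\nabla f,\nabla f)\geq K|\nabla f|^2$ $\mm$-a.e.\ for every $f\in{\rm TestF}_{\rm bs}(M)$. Given such an $f$ and a probability measure $\mu=\rho\mm$ with $\rho$ bounded and bounded support, for small enough $\epsilon>0$ the function $-\epsilon f$ (or, if needed, its $\Q_s$-regularization) generates via Hopf--Lax a genuine Wasserstein geodesic $(\mu_t^\epsilon)_{t\in[0,1]}$ with $\mu_0^\epsilon=\mu$ and initial velocity $\epsilon\nabla f$. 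Plugging $(\mu_t^\epsilon)$ into the weak $K$-convexity inequality \eqref{def1-eq} and running the identity of the previous paragraph in reverse at $t=0$ gives
\[
\int \Hess_u(\nabla f,\nabla f)\,d\mu \;\geq\; K\int|\nabla f|^2\,d\mu,
\]
and letting $\rho$ concentrate on small balls yields the $\mm$-a.e.\ pointwise inequality.

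\textbf{Main obstacle.} The principal difficulty is the low regularity: unlike the test-function setting of \cite{K-O,LierlSturm2018,GKKO-R}, here $u$ is only $W^{2,2}_\loc$ (in particular $\Delta u$ need not lie in $W^{1,2}$), the Hopf--Lax $\phi_t$ is merely Lipschitz, and $u$ satisfies only the growth bound $u(x)\geq -a-b\d^2(x,x_0)$. I would cope with this via three ingredients: (a) cut-offs of the type used in Lemma \ref{lemma:boundedsupport} to localize $u$ to a ball large enough to contain the support of $(\mu_t)$, which is bounded because $\mu_0,\mu_1$ are; (b) the module-theoretic Hessian of Proposition \ref{prop:close}, which intrinsically makes sense for $u\in W^{2,2}_\loc$ and pairs continuously with gradients of test functions; (c) approximation of $u$ in the $H^{2,2}$ topology by test functions via Proposition \ref{coro:hessianbound}, so that both the second-derivative identity in (i)$\Rightarrow$(ii) and the final pointwise inequality in (ii)$\Rightarrow$(i) pass to the limit. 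The lower growth bound, together with the exponential integrability in Assumption \ref{assumption}(iv), controls the tail contributions from cut-off. This is the step where the recent second-order calculus of \cite{G-N} is essential and where our assumption of $W^{2,2}_\loc$-regularity (rather than $u\in{\rm TestF}$) is accommodated.
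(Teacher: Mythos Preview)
Your strategy hinges on the second-order differentiation formula
\[
\frac{d^2}{dt^2}\,U(\mu_t)=\int \Hess_u(\nabla\phi_t,\nabla\phi_t)\,d\mu_t
\]
along Wasserstein geodesics, and this is precisely what is \emph{not} available in the $\rcd$ setting. The paper itself flags this explicitly in the remark following Theorem~\ref{mainth-1}: the Gigli--Tamanini formula \cite{GT-S} is known only on $\rcdkn$ spaces with finite $N$, and ``it is still unknown to us whether we have such formula in $\rcd$ case or not.'' Concretely, to differentiate $t\mapsto\int\la\nabla u,\nabla\phi_t\ra\,d\mu_t$ you need $|\nabla\phi_t|^2\in W^{1,2}$ (so that $\nabla\partial_t\phi_t=-\tfrac12\nabla|\nabla\phi_t|^2$ makes sense and the Hessian identity \eqref{eq:hessian} can be invoked), but $\phi_t$ is merely Lipschitz and is dictated by the geodesic itself---you cannot replace it by a test function without destroying the geodesic structure. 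Your proposed fixes (cut-offs, $H^{2,2}$-approximation of $u$) address the regularity of $u$, not of $\phi_t$, and so do not close this gap.

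The paper circumvents the missing second-order formula entirely by an indirect ``change of reference measure'' argument: one shows (Lemma~\ref{lemma:mainth-1}) that $\Hess_u\ge K$ is equivalent, via $\tilde\Gamma_2^{mu}(f;\varphi)=\Gamma_2(f;e^{-mu}\varphi)+m\int\Hess_u(\nabla f,\nabla f)\varphi\,e^{-mu}d\mm$, to the Bakry--\'Emery inequality $\Gamma_2^{mu}\ge(mK+k)$ on the weighted space $M^{mu}=(X,\d,e^{-mu}\mm)$ for every $m\in\N$. By Proposition~\ref{becondition} this is in turn equivalent to $M^{mu}$ being ${\rm RCD}(k+mK,\infty)$, i.e.\ to $(k+mK)$-displacement convexity of ${\rm Ent}_{e^{-mu}\mm}$. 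Since ${\rm Ent}_{e^{-mu}\mm}(\mu)={\rm Ent}_{\mm}(\mu)+m\int u\,d\mu$, dividing by $m$ and sending $m\to\infty$ isolates the weak $K$-convexity of $u$. The hypotheses $u\in L^\infty_\loc$ and $u\ge -a-b\d^2(\cdot,x_0)$ are used exactly here: the first to identify $W^{1,2}_\loc(M)$ with $W^{1,2}_\loc(M^{mu})$, the second to ensure $e^{-mu}\mm$ still has exponential volume growth so that Proposition~\ref{becondition} applies.
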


\begin{proof}

First of all, we rewrite the Bochner's formula in Proposition \ref{becondition} in the following weak form. Recall that ${\rm D}_{L^\infty}(\Delta):=\Big 
\{ \varphi: \Delta \varphi \in L^2 \cap L^\infty, \varphi\in W^{1,2}\cap L^\infty \Big \}$.

For any $f \in {\rm TestF}(M)$, $\varphi \in {\rm D}_{L^\infty}(\Delta)$, we define
\begin{eqnarray}\label{th0-eq-0}
 \Gamma_2(f; \varphi)&:=& \int \varphi \,\d {\bf \Gamma}_2(f)\\
 &=&\frac12 \int |\D f|^2 \Delta \varphi\,\d\mm-\int \la \nabla f, \nabla \Delta f\ra \varphi\,\d\mm.
\end{eqnarray}

If $\varphi\in  \Lip$, we know  $\varphi\nabla f\in {\rm D}({\rm div})$, hence
\begin{eqnarray*}
 \Gamma_2(f; \varphi)&=&\frac12 \int |\D f|^2 \Delta \varphi\,\d\mm+\int  {\rm div}(\varphi\nabla f)\Delta f \,\d\mm\\
 &=& \frac12 \int |\D f|^2 \Delta \varphi\,\d \mm+\int (\Delta f)^2\varphi\,\d \mm+\int \la \nabla \varphi, \nabla f\ra \Delta f\,\d \mm\\
 &=:&\tilde{\Gamma}_2(f; \varphi).
\end{eqnarray*}

By Proposition \ref{becondition}  we know
\begin{equation}\label{th0-eq-1}
\tilde{\Gamma}_2(f; \varphi)= \Gamma_2(f; \varphi) \geq k\int |\D f|^2\varphi\,\d\mm
\end{equation}
for any $f \in {\rm TestF}(M)$, $\varphi \in {\rm D}_{L^\infty}(\Delta)\cap \Lip $, $\varphi\geq 0$.
%添加一个证明

 Since $u$ is locally bounded, we know $W^{1,2}_\loc(M)=W^{1,2}_\loc(M^u)$ as sets. We define
$$
{\rm D}_1:=\Big\{ f\in {\rm D}({\bf \Delta})\cap L^\infty_\loc\cap \Lip(M): \Delta f\in L^2_\loc(M)\Big\} $$
and
$$
{\rm D}_2:=\Big\{ f\in {\rm D}({\bf \Delta}^{M^u})\cap L^\infty_\loc\cap \Lip(M^u):\Delta^{{M^u}} f \in L^2_\loc(M^u)\Big\},
$$
It can be seen that $D_1=D_2$ as sets and  ${\bf \Delta}^{M^u}f={\bf \Delta}f- \la \nabla u, \nabla f \ra\,\mm$ for any $f \in {\rm D}_1$.
In fact, for any $f\in {\rm D}_1$  and  $\varphi\in \Lip(X, \d)$ with bounded support, we have
\begin{eqnarray*}
\int \la \nabla f, \nabla \varphi \ra e^{-u}\,\d\mm&=&\int \la \nabla f, e^{-u}\nabla \varphi \ra \,\d\mm\\
(\text{by Leibniz rule}) &=& \int \la \nabla f, \nabla (e^{-u}\varphi )\ra \,\d\mm-\int \la \nabla f, \nabla e^{-u} \ra \varphi \,\d\mm\\
&=& -\int e^{-u}\varphi \Delta f\,\d\mm+\int \la \nabla f, \nabla u \ra \varphi e^{-u} \,\d\mm.
\end{eqnarray*}
So $f\in  {\rm D}({\bf \Delta}^{M^u})$ and ${ \Delta}^{M^u}f={ \Delta}f- \la \nabla u, \nabla f \ra \in L^2_\loc$. Similarly, we can prove the  opposite assertion.

We define ${\rm TestF}(M^u)$ as the space of test functions on $M^u:=(X, \d, e^{-u}\mm)$.  For any $f \in {\rm TestF}(M^u)$, $\varphi \in {\rm D}_{L^\infty}(\Delta^{M^u})\cap W^{1,2}\cap \Lip (M^u)$, we define $\Gamma^u_2(f; \varphi)$ as in \eqref{th0-eq-0} by replacing $(\Delta, \mm)$ by $(\Delta^{M^u}, e^{-u}\mm)$. Similarly we can define $\tilde{\Gamma}^u_2(f; \varphi)$ for any $f\in {\rm D}_2$.
From Lemma \ref{lemma:mainth-1} below we know  the following assertions are equivalent:
\begin{itemize}
\item [a)] $\Gamma_2(f; \varphi) \geq k\int |\D f|^2\varphi\,\d\mm$, and $ \int \H_u(\nabla f, \nabla f)\varphi\,\d\mm\geq K\int |\D f|^2\varphi\,\d\mm$  for any $f  \in {\rm TestF}(M)$,  $\varphi \in {\rm D}_{L^\infty}(\Delta) $, $\varphi\geq 0$,
\item [b)] $\Gamma_2^{mu}(f; \varphi) \geq (mK+k)\int |\D f|^2\varphi\,e^{-mu}\d\mm$ for any $m\in \N$, $f\in {\rm TestF}(M^{mu})$,  $\varphi \in {\rm D}_{L^\infty}(\Delta^{M^{mu}}) $, $\varphi\geq 0$.
\end{itemize}

\bigskip
Now we can complete the proof.

\underline{$i) \Longrightarrow ii)$.}

If  $u$ is infinitesimally $K$-convex.  From Lemma \ref{lemma:mainth-1}, we know a) $\Longrightarrow$ b). As $M$ has Sobolev-to-Lipschitz property, so $M^{mu}:=(X, \d, e^{-mu}\mm)$  also has such property. Since $u(x) \geq -a-b \d^2(x, x_0)$, we know $e^{-mu}\mm$  has exponential volume growth.  From Proposition \ref{becondition}, we konw $M^{mu}$ is a ${\rm RCD}(k+mK, \infty)$  space. Therefore (by the original definition of $\cd$ condition,  cf. \cite{S-O1}) we have
\begin{equation}\label{th0-eq-2}
{\rm Ent}_{ e^{-mu}\mm}(\mu_t) \leq (1-t){\rm Ent}_{ e^{-mu}\mm}(\mu_0)+t{\rm Ent}_{ e^{-mu}\mm}(\mu_1)-\frac{mK+k}2t(1-t)W^2_2(\mu_0, \mu_1)
\end{equation}
for any geodesic $(\mu_t)$ in Wasserstein space with bounded compression. Dividing $m$ on both sides of \eqref{th0-eq-2} and letting $m\to \infty$,
combining with the fact  ${\rm Ent}_{ e^{-mu}\mm}(\mu_t)={\rm Ent}_{\mm}(\mu_t)+m\int u\,\d\mu_t$, we know $u$ is weakly $K$-convex.

\underline{$ii) \Longrightarrow i)$.}

 If  $u$ is weakly $K$-convex, we know (from the  definition) that the metric measure space $M^{mu}:=(X, \d, e^{-mu}\mm)$ is  ${\rm RCD}(k+mK, \infty)$ for any $m\in N$. By Proposition \ref{becondition}  and Lemma \ref{lemma:mainth-1} we have  a).  By the density of test functions we can prove $\H_u \geq K$.

\end{proof}

\bigskip

\begin{lemma}\label{lemma:mainth-1}
The following assertions are equivalent:
\begin{itemize}
\item [1)] $\Gamma_2(f; \varphi) \geq k\int |\D f|^2\varphi\,\d\mm$, and $ \int \H_u(\nabla f, \nabla f)\varphi\,\d\mm\geq K\int |\D f|^2\varphi\,\d\mm$  for any $f  \in {\rm TestF}(M)$,  $\varphi \in {\rm D}_{L^\infty}(\Delta)$, $\varphi\geq 0$,
\item [2)] $\Gamma_2(f; \varphi) \geq k\int |\D f|^2\varphi\,\d\mm$, and $ \int \H_u(\nabla f, \nabla f)\varphi\,\d\mm\geq K\int |\D f|^2\varphi\,\d\mm$  for any $f  \in {\rm TestF}_{\rm bs}(M)$,  $\varphi \in {\rm D}_{L^\infty}(\Delta)\cap \Lip (M) $, $\varphi\geq 0$ with bounded support,
\item [3)] ${\tilde \Gamma}_2^{mu}(f; \varphi) \geq (mK+k)\int |\D f|^2\varphi\,e^{-mu}\d\mm$ for any $m\in \N$, $f \in {\rm TestF}_{\rm bs}(M)$,  $\varphi \in {\rm D}_{L^\infty}(\Delta)\cap \Lip (M)$, $\varphi\geq 0$ with bounded support,
\item [4)]  $\Gamma_2^{mu}(f; \varphi) \geq (mK+k)\int |\D f|^2\varphi\,e^{-mu}\d\mm$ for any $m\in \N$, $f\in {\rm TestF}(M^{mu})$,  $\varphi \in {\rm D}_{L^\infty}(\Delta^{M^{mu}}) $, $\varphi\geq 0$.
\end{itemize}

\end{lemma}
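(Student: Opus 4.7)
The plan is to prove the chain of equivalences in two parts. The outer links $1)\Longleftrightarrow 2)$ and $3)\Longleftrightarrow 4)$ are density-of-test-function statements that follow routinely from Lemma \ref{lemma:boundedsupport} (and its weighted analogue), while the inner link $2)\Longleftrightarrow 3)$ encodes the Bakry--\'Emery tensorization relating the Bochner inequality on $M$ to the one on $M^{mu}$. The Bochner half of $1)$--$2)$ is already built in because $M$ is ${\rm RCD}(k,\infty)$ (Proposition \ref{becondition}), so the substantive content is the Hessian bound. The core identity I will establish is
\begin{equation}\label{eq:BEid}
\Gamma_2^{mu}(f;\varphi)\;=\;\Gamma_2(f;\,\varphi e^{-mu})\,+\,m\int \H_u(\nabla f,\nabla f)\,\varphi\,e^{-mu}\,\d\mm,
\end{equation}
valid for $f\in {\rm TestF}_{\rm bs}(M)$ and sufficiently regular nonnegative, bounded-support $\varphi$.

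For $1)\Longleftrightarrow 2)$, direction $1)\Longrightarrow 2)$ is mere restriction. For the converse, given $f\in{\rm TestF}(M)$ and $\varphi\in D_{L^\infty}(\Delta)$ with $\varphi\geq 0$, Lemma \ref{lemma:boundedsupport} furnishes $f_n\in {\rm TestF}_{\rm bs}(M)$ with $f_n\to f$ in the $D({\bf \Delta})$-topology, while the cutoffs $\chi_n$ appearing in the proof of that lemma give $\varphi_n:=\varphi\chi_n\in D_{L^\infty}(\Delta)\cap\Lip(M)$ of bounded support with $\varphi_n\to\varphi$ in $W^{1,2}$ and $\Delta\varphi_n\to\Delta\varphi$ in $L^2$. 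Passing to the limit in the inequalities of $2)$ applied to $(f_n,\varphi_n)$ yields $1)$, using the standard continuities of $\Gamma_2(\cdot;\cdot)$, of $\int \H_u(\nabla\cdot,\nabla\cdot)\cdot\,\d\mm$ and of $\int|\D\cdot|^2\cdot\,\d\mm$ in those topologies. The equivalence $3)\Longleftrightarrow 4)$ is proved by the verbatim argument carried out inside $M^{mu}$, together with the identification $\tilde\Gamma_2^{mu}=\Gamma_2^{mu}$ whenever the test function is Lipschitz (an integration-by-parts check in the weighted space).

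For \eqref{eq:BEid}, I substitute $\Delta^{mu}\varphi=\Delta\varphi-m\la\nabla u,\nabla\varphi\ra$ and $\Delta^{mu}f=\Delta f-m\la\nabla u,\nabla f\ra$ into the definition of $\Gamma_2^{mu}(f;\varphi)$, and separately expand $\Gamma_2(f;\varphi e^{-mu})$ via integration by parts against $\nabla(\varphi e^{-mu})=e^{-mu}\nabla\varphi-m\varphi e^{-mu}\nabla u$; the residual difference collapses, thanks to the symmetric formula \eqref{eq:hessian} for the Hessian, to
\[
m\int\Bigl[\la\nabla f,\nabla\la\nabla u,\nabla f\ra\ra-\tfrac{1}{2}\la\nabla u,\nabla|\D f|^2\ra\Bigr]\varphi\,e^{-mu}\,\d\mm\;=\;m\int \H_u(\nabla f,\nabla f)\,\varphi\,e^{-mu}\,\d\mm.
\]
Granted \eqref{eq:BEid}, direction $2)\Longrightarrow 3)$ follows by replacing $\varphi$ with $\varphi e^{-mu}$ in the two inequalities of $2)$ and summing. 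For $3)\Longrightarrow 2)$, replacing $\varphi$ in $3)$ by $\varphi e^{mu}$ (which is bounded and has bounded support since $u\in L^\infty_\loc$) produces
\[
\Gamma_2(f;\varphi)+m\!\int \H_u(\nabla f,\nabla f)\varphi\,\d\mm\;\geq\;(mK+k)\!\int|\D f|^2\varphi\,\d\mm
\]
for every $m\in\N$; dividing by $m$ and letting $m\to\infty$ isolates $\int \H_u(\nabla f,\nabla f)\varphi\,\d\mm\geq K\int|\D f|^2\varphi\,\d\mm$, while the Bochner part of $2)$ is already built in from the ${\rm RCD}(k,\infty)$ hypothesis.

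The main technical obstacle I anticipate is the admissibility of the substitutions $\varphi\leadsto\varphi e^{\pm mu}$: because $u\in W^{2,2}_\loc\cap L^\infty_\loc$ need not be Lipschitz, these products generally fall outside the class $D_{L^\infty}(\Delta)\cap\Lip(M)$ required in the literal statements of $2)$ and $3)$. I will resolve this by first extending, through a preliminary approximation argument, the validity of the Hessian-bound inequality in $1)$--$2)$ to the broader class of nonnegative $\psi\in W^{1,2}\cap L^\infty$ with $\Delta\psi\in L^2$ of bounded support; both sides of the inequality extend continuously to this enlarged class (using Proposition \ref{coro:hessianbound} to control the $W^{2,2}$-norms involved), and $\varphi e^{\pm mu}$ always belongs to it under the standing hypotheses on $u$. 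Once this extension is in place, the substitutions become legitimate and the proof of $2)\Longleftrightarrow 3)$ proceeds as described.
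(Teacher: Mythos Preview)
Your overall strategy is the same as the paper's: the core is the Bakry--\'Emery identity \eqref{eq:BEid} relating $\tilde\Gamma_2^{mu}$ to $\Gamma_2$ plus a Hessian term, and the outer links are density statements. The treatment of $1)\Longleftrightarrow 2)$ and the derivation of the identity match the paper. There are, however, two points where your sketch diverges from what is actually needed.

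\textbf{On $2)\Longleftrightarrow 3)$.} Your proposed fix for the admissibility of $\varphi e^{\pm mu}$ is to enlarge the class of test weights to nonnegative $\psi\in W^{1,2}\cap L^\infty$ with $\Delta\psi\in L^2$ and bounded support. The condition $\Delta\psi\in L^2$ is both unnecessary and potentially unavailable: under the standing hypothesis $u\in W^{2,2}_{\rm loc}\cap L^\infty_{\rm loc}$ on an ${\rm RCD}(k,\infty)$ space there is no guarantee that $\Delta u\in L^2_{\rm loc}$, so $\Delta(\varphi e^{\pm mu})$ need not be in $L^2$. The paper sidesteps this cleanly: given $\varphi$ as in $2)$, it picks a cutoff $\eta\in{\rm TestF}$ with $\eta\equiv 1$ on $\supp\varphi$, approximates $\eta e^{mu}\in W^{1,2}\cap L^\infty$ by genuine test functions $\phi_n$ in $W^{1,2}$ (via heat mollification), and sets $\varphi_n:=\varphi\phi_n$, so that $\varphi_n$ stays in the admissible class for $3)$ while $\varphi_n e^{-mu}\to\varphi$ in $W^{1,2}$. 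Passing to the limit uses only $W^{1,2}$-convergence of the weight, not convergence of its Laplacian.

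\textbf{On $3)\Longleftrightarrow 4)$.} This is not the ``verbatim argument carried out inside $M^{mu}$''. Running Lemma~\ref{lemma:boundedsupport} in $M^{mu}$ would connect $4)$ to a statement with $f\in{\rm TestF}_{\rm bs}(M^{mu})$, but $3)$ involves $f\in{\rm TestF}_{\rm bs}(M)$, and these classes are different: for $f\in{\rm TestF}_{\rm bs}(M)$ one has $\Delta^{M^{mu}}f=\Delta f-m\langle\nabla u,\nabla f\rangle$, which need not lie in $L^\infty$ (since $|\nabla u|$ is not assumed bounded), so $f\notin{\rm TestF}(M^{mu})$ in general. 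The paper handles $3)\Longrightarrow 4)$ by an explicit two-step approximation: first multiply $f\in{\rm TestF}(M^{mu})$ (assumed $L^\infty\cap\Lip$) by cutoffs $\chi_{a_n}$, then apply the heat mollifier ${\rm h}_\epsilon$ \emph{on $M$} and multiply by cutoffs again, producing $f_n:=\chi_{a_n}{\rm h}_{b_n}(\chi_{a_n}f)\in{\rm TestF}_{\rm bs}(M)$ with $f_n\to f$ in both $W^{1,2}(M^{mu})$ and $D(\Delta^{M^{mu}})$; the remaining Lipschitz/$L^\infty$ assumptions on $f,\varphi$ are removed at the end by heat-flow regularization and truncation. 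This step is where the genuine work lies, and it is not captured by a density lemma internal to $M^{mu}$.
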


\begin{proof}

$1) \Longleftrightarrow 2)$ is a direct consequence of the density of ${\rm TestF}_{\rm bs}$ in ${\rm TestF}$ (see Lemma \ref{lemma:boundedsupport}). 

To prove $2) \Longrightarrow 3)$, it is sufficient to prove 
$$
{\tilde \Gamma}_2^{mu}(f; \varphi)=\Gamma_2(f; e^{-mu} \varphi)+m\int \H_{u}(\nabla f, \nabla f)\varphi\,e^{-mu}\d\mm
$$
for any $f  \in {\rm TestF}_{\rm bs}(M)$,  $\varphi \in {\rm D}_{L^\infty}(\Delta)\cap \Lip (M) $, $\varphi\geq 0$.
By Proposition \ref{prop:close} we know $\la \nabla u, \nabla f \ra \in W^{1,2}$, and  the Hessian of $u\in W^{2,2}$ can be written in the form of formula \eqref{eq:hessian}. So by  a direct computation we have
\begin{eqnarray*}
{\tilde \Gamma}_2^{mu}(f; \varphi)&=& \frac12 \int |\D f|^2  \Delta^{M^u} \varphi  \,e^{-mu} \d\mm-\int \la \nabla f, \nabla \Delta^{M^u} f\ra \varphi  \,e^{-mu}\d\mm\\
&=&  \frac12 \int |\D f|^2 ( \Delta-m\nabla u)   \varphi \,e^{-mu} \d\mm-\int \la \nabla f, \nabla (\Delta f-m\la \nabla u, \nabla f\ra) \ra \varphi \,e^{-mu}\d\mm\\
&=& \Gamma_2(f; e^{-mu} \varphi)+m\int \H_{u}(\nabla f, \nabla f)\varphi\,e^{-mu}\d\mm.
\end{eqnarray*}

 Conversely, we claim that for any $\varphi  \in {\rm D}_{L^\infty}(\Delta)\cap \Lip (M)$, $\varphi\geq 0$ with bounded support, we can find $\varphi_n \in {\rm D}_{L^\infty}(\Delta)\cap \Lip (M)$, $\varphi_n\geq 0$ with bounded support such that $\varphi_n e^{-mu} \to \varphi$ in $W^{1,2}$. To prove this claim, we recall the following well-known approximation procedure. For any $f\in L^2$, we define
\[
{\mathrm h}_\epsilon f:=\frac 1{\epsilon} \int_0^\infty\kappa({r}/{\epsilon}){\mathcal{H}}_r f\,\d r=\int^\infty_0 \kappa(s)\mathcal{H}_{\epsilon s} f\,\d s.~~~\epsilon>0,
\]
where $(\mathcal{H}_t)$ is the heat flow,  $\kappa \in C^\infty_c((0, \infty))$ with $\kappa \geq 0$ and $\int_0^\infty \kappa(r)\,\d r=1$. It can be checked that $\Delta{\mathrm h}_\epsilon f\in L^2 \cap L^\infty$, ${\mathrm h}_\epsilon f \in {\rm TestF}$ if $f\in L^2 \cap L^\infty$. In addition, we know ${\mathrm h}_\epsilon f \to f$ both in $W^{1,2}$ and ${\rm D}({\bf \Delta})$ as $\epsilon \downarrow 0$ if $f\in {\rm D}({\bf \Delta})$.

Now we turn back to our problem. Since $u, e^{-u}$ are locally finite, we can approximate $\eta e^{mu}$ by test functions $(\phi_n)_n$, where $\eta\in {\rm TestF}$ has bounded support and $\eta=1$ on $\supp \varphi$. Then $\varphi_n:=\varphi \phi_n$ achieve our aim.
From $3)$  we know
\[
\Gamma_2(f; e^{-mu} \varphi_n)+m\int \H_{u}(\nabla f, \nabla f)\varphi_n\,e^{-mu}\d\mm\geq (mK+k)\int |\D f|^2\varphi_n\,e^{-mu}\d\mm.
\]
Letting $n\to \infty$, we have
\begin{equation}\label{th0-eq-1.5}
\Gamma_2(f; \varphi)+m\int \H_{u}(\nabla f, \nabla f)\varphi\,\d\mm\geq (mK+k)\int |\D f|^2\varphi\,\d\mm.
\end{equation}
Letting $m=0$, we know $\Gamma_2(f; \varphi) \geq k\int |\D f|^2\varphi\,\d\mm$. 
Dividing $m$ on both sides of \eqref{th0-eq-1.5} and letting $m \to \infty$, we prove $\H_u \geq K$.

To prove $3) \Longrightarrow 4)$ it is sufficient to approximate $f, \varphi$ in $4)$. We firstly assume that $f\in L^\infty \cap \Lip$ and $\varphi \in \Lip$, then we can use the approximation technique above again.
Let $(\chi_n)$ be the cut-off functions in Lemma \ref{lemma:boundedsupport}. For any $n\in \N$, we can find $a_n>n$ such that 
$$
\|\chi_{a_n} f-f\|_{W^{1,2}(M^{mu})}+\|\Delta^{M^{mu}} (\chi_{a_n} f-f)\|_{L^2(M^{mu})}<\frac1n.
$$
Since $\chi_{a_n} f \in L^2 \cap L^\infty (M)$,  from the above mentioned approximation procedure, we know that 
${\mathrm h}_\epsilon (\chi_{a_n} f) \to \chi_{a_n} f$ both in $W^{1,2}(M)$ and in ${\rm D}({\bf \Delta})$ as $\epsilon \downarrow 0$. In particular, we know $\chi_{a_n}{\mathrm h}_\epsilon (\chi_{a_n} f) \to \chi_{a_n}^2 f$  in $W^{1,2}(M)$ and in ${\rm D}({\bf \Delta})$ as $\epsilon \downarrow 0$. As each of $\chi_{a_n}{\mathrm h}_\epsilon (\chi_{a_n} f)$ and $\chi_{a_n} f$ has bounded support, we know that there exits $0<b_n< \frac1n$ such that 
$$
\|\chi_{a_n} f {\mathrm h}_{b_n}(\chi_{a_n} f)-\chi^2_{a_n} f\|_{W^{1,2}(M^{mu})}+\|\Delta \big (\chi_{a_n}{\mathrm h}_{b_n}(\chi_{a_n} f)\big)-\Delta(\chi^2_{a_n} f)\|_{L^2(M^{mu})}<\frac1n.
$$

We define $f_n:=\chi_{a_n} {\mathrm h}_{b_n} (\chi_{a_n} f)$. It can be seen that $f_n \in {\rm TestF}_{\rm bs}(M)$ and 
$f_n \to f$  both in $W^{1,2}(M^{mu})$ and ${\rm D}({\bf \Delta}^{M^{mu}})$ as $n \to \infty$. Similarly, for any  $\varphi \in {\rm D}_{L^\infty}(\Delta^{M^{mu}})\cap \Lip (M^{mu}) $, $\varphi\geq 0$,  we can define $\varphi_n:=\chi_{a'_n} {\mathrm h}_{b'_n} (\chi_{a'_n} \varphi)$ in the same way with some $a'_n, b'_n$.

It can be checked that 
$$\tilde{\Gamma}_2^{mu}(f_n, \varphi_n) \to \tilde{\Gamma}_2^{mu}(f, \varphi)={\Gamma}_2^{mu}(f, \varphi),~~~\int |\D f_n|^2\varphi_n\,e^{-mu}\d\mm \to \int |\D f|^2\varphi\,e^{-mu}\d\mm$$
 as $n \to \infty$. Then we have $4)$ for such functions $f, \varphi$.  By an approximation using heat flow,  we can remove the assumption $\varphi \in \Lip$ ( see e.g. Proposition 3.6, \cite{GKKO-R}). We can also remove the assumption $f\in L^\infty$ by a simple truncation argument (see e.g. Theorem 4.8, \cite{EKS-O}). Then we prove $4)$ for all the required functions $f$ and $\varphi$.

Finally, it can be checked that the test functions $f, \varphi$ in $3)$ are included in the test functions in $4)$, so $4) \Longrightarrow 3)$.
\end{proof}

\subsection{Equivalent characterizations}
In this part we will prove the main results in this paper. The first theorem characterizes the $K$-convex functions on $\rcd$ space. 
Due to lack of knowledge about the regularity of weak $K$-convex functions,  we   assume a priori  that $u$ 
satisfies the following properties.

\begin{assumption}\label{assumption:u}
The function $u$ satisfies the following properties:
\begin{itemize}
\item [i)]$u\in L^1_{\rm loc}(X, \mm)$ and $u$ is lower semi-continuous,
\item [ii)]$u(x) \geq -a-b \d^2(x, x_0)$ for some $a, b \in \R$, $x_0 \in X$.

Assumptions i) and ii) ensure that the functional $\mathcal{P}_2 \ni \mu\mapsto \int u\,\d \mu$ is lower semi-continuous, and not identically $-\infty$.

\item [iii)]$\nabla u \in L^2_\loc(TM)$,
\item [iv)] there exists a unique regular Lagrangian flow associated to $-\nabla u$.
\end{itemize}
\end{assumption}

\begin{theorem}\label{mainth-1}
Let $\ms$ be a $\rcd$ metric measure space. Assume that $u$  fulfils Assumption \ref{assumption:u}.  We denote the regular Lagrangian flow associated to $-\nabla u$ by $(F_t)$.
Then the following characterizations are equivalent.
\begin{itemize}
\item [1)] $u$ is weakly $K$-convex.
\item [2)] $\nabla u$ is $K$-monotone.
\item [3)] The exponential contraction in Wasserstein distance:
\[
W_2(\mu^1_t, \mu^2_t)\leq e^{-Kt}W_2(\mu^1_0, \mu^2_0), ~~\forall t>0
\]
holds for any two absolutely continuous curves   $(\mu_t^1), (\mu_t^2)\subset (\mathcal{P}_2, W_2)$ with bounded compression,  whose velocity fields are $-\nabla u$.
\item [4)] The regular Lagrangian flow $(F_t) $ associated to $-\nabla u$ has a unique continuous representation $X$. Furthermore, the exponential contraction
\[
\d(F_t(x), F_t(y)) \leq e^{-Kt} \d(x, y)
\]
holds for any $x, y \in X$ and $t>0$.
\item [5)] For any $f\in W^{1,2}\ms$, we have $f\circ F_t \in W^{1,2}$ for any $t>0$, and 
\[
|\D (f\circ F_t) |(x) \leq e^{-Kt}|\D f|\circ F_t(x), ~~\mm-\text{a.e.}~ x\in X
\]

Furthermore, if  $u \in L^\infty_\loc \cap W^{2,2}_\loc$, then one of the above characterizations  holds
if and only if :
\item [6)] $u$ is infinitesimally $K$-convex.
\end{itemize}
\end{theorem}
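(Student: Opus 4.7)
The plan is to establish a cyclic chain of implications, with $1) \Leftrightarrow 6)$ handled separately via Theorem \ref{mainth-0} whenever the extra regularity is in force. I would first prove $1) \Rightarrow 2)$ by fixing admissible $\mu^1, \mu^2$ and taking the unique $W_2$-geodesic $(\mu_t)$ between them (Metric Brenier, Proposition \ref{prop:brenier}). By Proposition \ref{prop:geod}, the geodesic satisfies a continuity equation whose velocity at $t=0^+$ is $-\nabla\varphi$ and at $t=1^-$ is $\nabla\varphi^c$. Since $u\in W^{1,2}_{\loc}$ and the geodesic lives in a bounded region, the derivative formula $\tfrac{d}{dt}\int u\,\d\mu_t = \int\la\nabla u, \text{velocity}_t\ra\,\d\mu_t$ makes sense; differentiating the $K$-convexity inequality at the two endpoints and subtracting produces exactly the $K$-monotonicity inequality. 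The converse $2) \Rightarrow 1)$ is obtained by applying $K$-monotonicity to every subsegment $(\mu_s)_{s\in[s_0,s_1]}$ of the geodesic, which yields a monotonicity of $s\mapsto\tfrac{d}{ds}U(\mu_s)$ with constant $K\, W_2^2(\mu_0,\mu_1)$, and integrating.

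For $2) \Rightarrow 3)$, consider two solutions $(\mu_t^1), (\mu_t^2)$ and study $f(t) := \tfrac12 W_2^2(\mu_t^1,\mu_t^2)$. A two-measure version of Proposition \ref{prop:derw2} gives
\[
f'(t) = -\int\la\nabla u,\nabla\varphi_t\ra\,\d\mu_t^1 - \int\la\nabla u,\nabla\varphi_t^c\ra\,\d\mu_t^2,
\]
where $(\varphi_t,\varphi_t^c)$ is a Kantorovich pair between $\mu_t^1$ and $\mu_t^2$; $K$-monotonicity bounds this above by $-2Kf(t)$, and Gronwall yields the exponential contraction. The converse $3) \Rightarrow 2)$ is just the pointwise evaluation of this derivative at $t=0^+$.

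The step I expect to be the main obstacle is $3) \Rightarrow 4)$: upgrading the measure-valued contraction to a pointwise Lipschitz estimate on $F_t$. The plan is to apply the contraction to the smoothed initial data $\mu_0^i = \mm|_{B_\epsilon(x_i)}/\mm(B_\epsilon(x_i))$, whose Wasserstein distance tends to $\d(x_1,x_2)$, and whose push-forwards satisfy the contraction. A Chebyshev-type argument combined with the bounded compression $(F_t)_\sharp\mm\leq C_0\mm$ produces a set of positive measure of pairs $(y_1,y_2)\in F_t(B_\epsilon(x_1))\times F_t(B_\epsilon(x_2))$ with controlled distance, yielding $\d(F_t(x_1),F_t(x_2))\leq e^{-Kt}\d(x_1,x_2)$ for $\mm\otimes\mm$-a.e.\ $(x_1,x_2)$. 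Invoking the Sobolev-to-Lipschitz property together with density of this good set then allows the selection of a unique continuous representative of $F_t$ obeying the same estimate everywhere.

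For $4) \Rightarrow 5)$: composition of Lipschitz $f$ with an $e^{-Kt}$-Lipschitz continuous $F_t$ gives $\lip(f\circ F_t)(x)\leq e^{-Kt}\lip(f)(F_t(x))$ directly, and extension to $W^{1,2}$ uses Lipschitz approximation together with the bounded compression of $F_t$ to control $L^2$ norms. The reverse $5) \Rightarrow 4)$ is a Kuwada-style duality: write $\d(F_t(x),F_t(y)) = \sup_{f \in \Lip_1}|f(F_t(x))-f(F_t(y))|$ and apply the weak gradient estimate to each test function via integration along geodesics connecting $x$ and $y$. Finally $4) \Rightarrow 3)$ is immediate from push-forward: any absolutely continuous $\mu_0$ produces $\mu_t = (F_t)_\sharp\mu_0$ solving the continuity equation, and the optimal coupling $(F_t,F_t)_\sharp\pi_0$ realizes the contraction. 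This closes the loop via $5) \Rightarrow 4) \Rightarrow 3) \Rightarrow 2) \Rightarrow 1)$, and Theorem \ref{mainth-0} supplies the link with $6)$ under the extra regularity hypothesis.
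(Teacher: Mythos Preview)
Your overall cycle is plausible but differs in several places from the paper's, and the step you yourself flag as the main obstacle does have a real gap as written.

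\textbf{Structural differences.} The paper does not go $2)\Rightarrow 3)$ directly; instead it proves $1)\Rightarrow 3)$ by showing that the regular Lagrangian flow is an $\mathrm{EVI}_K$ gradient flow of $U$ (combining Proposition~\ref{prop:derw2} with the endpoint inequality \eqref{eq4-th1}), and then invokes the standard EVI contraction. Your direct Gronwall approach on $t\mapsto W_2^2(\mu_t^1,\mu_t^2)$ is essentially what the paper does later in Theorem~\ref{mainth-2} for general vector fields; it works, but note that one must first reduce to compactly supported $\mu_t^i$ and prove continuity (not just a.e.\ differentiability) of $t\mapsto\int\la\nabla u,\nabla\varphi_t\ra\,\d\mu_t^i$ via stability of Kantorovich potentials and Lemma~\ref{lemma:weakstrong}; without this you cannot evaluate the derivative at $t=0^+$ for $3)\Rightarrow 2)$. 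Similarly, the paper closes the loop via $5)\Rightarrow 3)$ using a Hopf--Lax/Kuwada argument, and supplies an independent $4)+5)\Rightarrow 2)$ through a curved interpolation; it does not attempt $5)\Rightarrow 4)$.

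\textbf{The gap in $3)\Rightarrow 4)$.} Your Chebyshev step does not do what you claim. Knowing that the optimal coupling between $(F_t)_\sharp\mu_0^1$ and $(F_t)_\sharp\mu_0^2$ gives positive mass to pairs $(y_1,y_2)$ at controlled distance only tells you that \emph{some} $z_i\in B_\epsilon(x_i)$ have $\d(F_t(z_1),F_t(z_2))$ small; it says nothing about $F_t(x_1),F_t(x_2)$ themselves, and does not yield the a.e.\ pair estimate without an additional Lebesgue-density argument you have not supplied. The paper's argument is entirely different and more robust: by the $W_2$-contraction, for any $x\in X$ the flows started from $\tfrac{1}{\mm(B_{1/n}(x))}\mm\restr{B_{1/n}(x)}$ form a Cauchy sequence in $(\mathcal P_2,W_2)$ and converge to a curve $U_t(x)$; the non-branching property of the RLF (Proposition~\ref{prop:rlf}, item 4) then forces $U_t(x)$ to be a Dirac mass, which defines the continuous extension of $F_t$ everywhere and gives the pointwise contraction by taking $\mu_0=\delta_x$, $\nu_0=\delta_y$. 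Your proposed $5)\Rightarrow 4)$ has a related issue: ``integration along geodesics'' is not available for a merely $W^{1,2}$ function $f\circ F_t$; one must pass through the Sobolev-to-Lipschitz property applied to a countable separating family of $1$-Lipschitz test functions to produce a common full-measure set and then extend.
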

\begin{proof}

\underline{$1)\Longrightarrow 2)$}:  Let $\mu_0, \mu_1 \in \mathcal{P}_2$ be any two measures with bounded densities and bounded supports. We consider the (unique) geodesic $(\mu_t)_{t\in [0,1]}$ from $\mu_0$ to $\mu_1$. From weak $K$-convexity, we know
\begin{equation}\label{eq2-th1}
U(\mu_s) \leq \frac{(1-s)}{1-t}U(\mu_t)+\frac{s-t}{1-t}U(\mu_1)-\frac{K}{2}\frac{(1-s)(s-t)}{1-t}W^2_2(\mu_0, \mu_1),~~\forall s\in [t,1],
\end{equation}
where $U(\mu)=\int u\,\d \mu$. Therefore, 
\begin{equation}\label{eq3-th1}
\frac{U(\mu_s)-U(\mu_t)}{s-t}\leq \frac{1}{1-t}\Big [ U(\mu_1)-U(\mu_t)\Big ]-\frac{K}{2}\frac{(1-s)}{1-t}W^2_2(\mu_0, \mu_1).
\end{equation}
Letting $s\downarrow t$ and $t\downarrow 0$ in \eqref{eq3-th1}, by Proposition \ref{prop-conteq}, Proposition \ref{prop:geod},   $C^1$ continuity of geodesics in Proposition \ref{prop:c1}, and lower semicontinuity of $U$  we obtain
\begin{equation}\label{eq4-th1}
-\int \la \nabla u, \nabla \varphi \ra \,\d \mu_0\leq U(\mu_1)-U(\mu_0)-\frac{K}{2}W^2_2(\mu_0, \mu_1),
\end{equation}
where  $\varphi$ is a Kantorovich potential from $\mu_0$ to $\mu_1$.

Similarly, by changing the role of $\mu_1$ and $\mu_0$ we obtain 
\begin{equation}\label{eq5-th1}
-\int \la \nabla u, \nabla \varphi^c \ra \,\d \mu_1\leq U(\mu_0)-U(\mu_1)-\frac{K}{2}W^2_2(\mu_0, \mu_1).
\end{equation}
Combining \eqref{eq4-th1} and \eqref{eq5-th1} we obtain 
\[
\int \la \nabla u, \nabla \varphi \ra \,\d \mu_0+\int \la \nabla u, \nabla \varphi^c \ra \,\d \mu_1 \geq KW^2_2(\mu_0, \mu_1).
\]
Since $\mu_0, \mu_1$ are arbitrary, we know $\nabla u$ is $K$-monotone.

\bigskip

\underline{$2)\Longrightarrow 1)$}: By an approximation argument,  it is sufficient to prove 
\[
U(\mu_{\frac12}) \leq \frac12 U(\mu_0)+\frac12U(\mu_1)-\frac{K}{8}W^2_2(\mu_0, \mu_1)
\]
for any  geodesic $(\mu_t) \subset (\mathcal{P}_2, W_2)$,  where $\mu_0, \mu_1$ have bounded densities.

From Proposition \ref{prop-1} and Proposition \ref{prop:geod} we know
\begin{eqnarray*}
U(\mu_\frac12)-U(\mu_0)&=&\int^\frac12_0 \Big (\frac{\d}{\d r} \int u \,\d \mu_r\Big )\, \d r\\
&=& \int_0^\frac12 \frac1{1-2r}\Big (\frac{\d}{\d s}\restr{s=0} \int u \,\d \mu_{r+s(1-2r)}\Big )\, \d r\\
&=& -\int^\frac12_0 \frac1{1-2r}\Big ( \int \la \nabla u, \nabla \varphi_{r, 1-r}\ra\,\d \mu_{r}\Big )\, \d r,
\end{eqnarray*}
where $\varphi_{r, 1-r}$ is a Kanrotovich potential relative to $(\mu_r, \mu_{1-r})$.

Similarly, we have
\begin{eqnarray*}
U(\mu_1)-U(\mu_\frac12)&=& \int_\frac12^1 \frac1{2r-1}\Big ( \int \la \nabla u, \nabla (\varphi_{1-r, r})^c\ra\,\d \mu_{r}\Big )\, \d r.
\end{eqnarray*}
By a change of variable, we obtain 
\[
U(\mu_1)-U(\mu_\frac12)=\int^\frac12_0 \frac1{1-2r}\Big ( \int \la \nabla u, \nabla (\varphi_{r, 1-r})^c\ra\,\d \mu_{1-r}\Big )\, \d r.
\]

Combining the results above, we obtain
\begin{eqnarray*}
&&\frac12 U(\mu_0)+\frac12U(\mu_1)-U(\mu_{\frac12})\\ &=& \frac12 \Big (U(\mu_0)-U(\mu_\frac12) \Big )+ \frac12 \Big (U(\mu_1)-U(\mu_\frac12) \Big )\\
&=&\frac12 \int^\frac12_0 \frac1{1-2r}\Big ( \int \la \nabla u, \nabla (\varphi_{1-r, r})^c\ra\,\d \mu_{r}+ \int \la \nabla u, \nabla (\varphi_{r, 1-r})^c\ra\,\d \mu_{1-r}\Big)\, \d r\\
&\geq & \frac12 \int^\frac12_0 \frac1{1-2r} K(1-2r)^2W_2^2(\mu_0, \mu_1)\,\d r\\
&=& \frac{K}8 W_2^2(\mu_0, \mu_1),
\end{eqnarray*}
which is the thesis.
\bigskip

\underline{$1)\Longrightarrow 3)$}: Let $\mu_0 \in \mathcal{P}_2(X)$ be a measure with bounded density and bounded support, $(\mu_t)$ be the  RLF associated to $-\nabla u$ starting from $\mu_0$.   Assume that $\mu_t, t\in [0, T]$ have uniformly bounded supports. We claim that $(\mu_t)$ is an ${\rm EVI}_K$-gradient flow of $U$  in the following sense:
\begin{equation}\label{th1-eq-evi}
\frac{\d }{\d t} \frac12 W^2_2(\mu_t, \nu)+\frac{K}2 W^2_2(\mu_t, \nu) \leq U(\nu)-U(\mu_t),~~~\text{for all}~t>0
\end{equation}
for any $\nu \in \mathcal{P}_2(X)$.  It is sufficient to prove \eqref{th1-eq-evi} for any $\nu$ with bounded density and compact support (cf. Proposition 2.21 \cite{AGS-M}).

By Proposition \ref{prop:derw2} we have
\begin{equation}\label{eq6-th1}
\frac{\d }{\d t} \frac12 W^2_2(\mu_t, \nu)=-\int \la \nabla u, \nabla \varphi_{t}\ra\,\d \mu_t
\end{equation}
for a.e. $t>0$, where $\varphi_{t}$ is the Kantorovich potential from $\mu_t$ to $\nu$.  From  \eqref{eq4-th1}, we know  
\begin{equation}\label{th1-eq-evi2}
-\int \la \nabla u, \nabla \varphi_t \ra \,\d \mu_t\leq U(\nu)-U(\mu_t)-\frac{K}{2}W^2_2(\mu_t, \nu),~~~~~\forall~t\geq 0.
\end{equation}
Combining \eqref{th1-eq-evi2} and \eqref{eq6-th1} we know \eqref{th1-eq-evi} holds for a.e. $t>0$. To prove the claim, it is sufficient to 
prove  the $C^1$-continuity of the function $t\mapsto W^2_2(\mu_{t}, \nu)$.
 So we  need to prove 
$$
\lmt{h}{0}\int \la \nabla u, \nabla \varphi_{t+h}\ra\,\d \mu_{t+h}=\int \la \nabla u, \nabla \varphi_{t}\ra\,\d \mu_t
$$
for any given $t$. From Lemma 2.3 in \cite{AGMR-R}, Proposition \ref{prop:c1-2} and the compactness of $\supp \nu$, we know the compactness/stability of Kantorovich potentials.  Combining with Proposition \ref{prop:c1-2}, Proposition \ref{prop:brenier} and uniform boundedness of $\supp \mu_t$,  we can prove the convergence using Lemma \ref{lemma:weakstrong}.

Let  $(\nu_t)$ be another  RLF associated to $-\nabla u$ starting from $\nu_0$,  $\nu_0$ has  bounded density and bounded support such that $\nu_t, t\in [0, T]$ have uniformly bounded supports. Then by Theorem 4.0.4 in \cite{AGS-G} we have the exponential contraction:
\begin{equation}\label{eq10-th1}
W_2(\mu_t, \nu_t)\leq e^{-Kt}W_2(\mu_0, \nu_0)
\end{equation}
for any $t\geq 0$.

For arbitrary $\mu_0, \nu_0\in \mathcal{P}_2$ with bounded densities,
  we can restrict $\mu_0, \nu_0$ on those points $x\in X$ such that  $F_t(x)\subset B_R(x_0)$ for any $t\in [0, T]$,  where  $x_0 \in X$,  $R>0$. Then we can renormalise $\mu_0, \nu_0$ and denote them by $\mu^R_0, \nu^R_0$. We  push-forward $\mu^R_0, \nu^R_0$ by $F_t$ and denote them by $(\mu^R_t), (\nu^R_t)$.   It can be seen that \eqref{eq10-th1} holds for  $(\mu^R_t), (\nu^R_t)$.
  
Letting $R \to \infty$ we know  $\mu^R_0, \nu^R_0$ converge to $\mu_0, \nu_0$  respectively in $(\mathcal{P}_2, W_2)$. From the completeness of $(\mathcal{P}_2, W_2)$, we know $(\mu^R_t), (\nu^R_t)$ converge to some $(\mu_t), (\nu_t)$. It can be seen from the uniqueness of RLF that $\mu_t=(F_t)_\sharp \mu_0$ and  $\nu_t=(F_t)_\sharp \nu_0$.  So \eqref{eq10-th1} holds for $(\mu_t), (\nu_t)$.
\bigskip

\underline{$3)\Longrightarrow 4)$}: Let $x\in X$ be an arbitrary point.  From exponential contraction,  by an approximation argument we know the flow associated to $-\nabla u$ starting at $\delta_x \in \mathcal{P}_2$ is uniquely defined.
In fact, for any $x\in X$, we can find a sequence $(\mu^n)\subset \mathcal{P}_2$ such that $\lmt{n}{\infty}W_2(\mu^n, \delta_x)=0$. From \eqref{eq10-th1} we know the flows associated to $-\nabla u$ from $\mu^n$, which is denoted by $(\mu^n_t)_t$,  converges uniformly to a  curve as $n \to \infty$. It can be seen that this limit curve is independent of the choice of $(\mu^n)_n$.
We denote this curve by $\big(U_t(x) \big )_t\subset \mathcal{P}_2(X)$. Now we claim that $U_t(x)$ supports on a single point in $X$.   Assume there exists $t_0>0$ such that $\supp U_{t_0}(x)$ has at least two  points $a, b \in X$. Let $\Pi^n\in \mathcal{P}(C([0, \infty), X))$ be the lifting of $(F_t)_\sharp \big (\frac1{\mm(B_{\frac1n}(x))} \mm\restr{B_{\frac1n}(x)}\big )$. Since RLFs are non-branching, we know there exist $\Gamma^{1,n}, \Gamma^{2,n} \in \supp \Pi^n$ with positive measures such that $\inf \{ \d(\gamma^1_{t_0}, \gamma^2_{t_0}): \gamma^1 \in \Gamma^{1,n}, \gamma^2 \in \Gamma^{2,n} \}>\frac12 \d(a, b)>0$ for $n$ large enough.   Then, by renormalization, we find two  sequences of curves $\mu^{i,n}_t:=(e_t)_\sharp\big(\frac1{\Pi^n(\Gamma^{i,n})} \Pi^n\restr{\Gamma^{i,n}} \big) , i=1,2$,   such that  $\mu^{i,n}_0 \to \delta_x$ but $\mu^{1,n}_{t_0}\neq \mu^{2,n}_{t_0}$  which contradicts to the uniqueness of $U_t(x)$. We still use $U_t(x)$ to denote this single point.

Let $x\in X$ be a point where the curve $(F_t(x))_t$ is  well-defined (i.e. $(F_t(x))_t$ is an absolutely continuous curve in $X$),  where $(F_t)$ is the RLF associated to $-\nabla u$. From the  construction procedure of $U_t$ and the uniqueness of $U_t(x)$ we know $U_t(x)=F_t(x) $.  

Therefore, we can extend $F_t$ to the whole space in the following way. For any $x\in X$,  we define $(F_t)_\sharp \delta_x=U_t(x)= \delta_{F_t(x)}$. Then we complete the proof by applying  \eqref{eq10-th1}  with $\mu_0=\delta_x, \mu_1=\delta_y$.

\bigskip

\underline{$4)\Longrightarrow 5)$}: 
Since $f\in W^{1,2}$, we know there exists a sequence $(f_n) \subset \Lip(X)$ such that $f_n \to f$ and $|\lip{f_n}| \to |\D f|$ in $L^2$. Then we have
\begin{eqnarray*}
\lmt{n}\infty\int |f\circ F_t-f_n\circ F_t|(x)^2\, \d\mm&=&\lmt{n}\infty\int |f-f_n|^2(x)\,\d(F_t)_\sharp \mm\\
&\leq& C\lmt{n}\infty\int |f-f_n|^2\,\d\mm\\
&=& 0,
\end{eqnarray*}
where we use  $(F_t)_\sharp \mm\leq C\mm$ in the second step.  Similarly, we can prove that $(|\lip{f_n}|\circ F_t)_n$ converges to $ |\D f|\circ F_t$ in $L^2$.

From the hypothesis, we know
\begin{eqnarray*}
|\lip{f_n\circ F_t}|(x)&=&\lmts{y}{x} \frac{|f_n\circ F_t(y)-f_n\circ F_t(x)|}{\d(y,x)}\\
&=& \lmts{y}{x} \frac{|f_n\circ F_t(y)-f_n\circ F_t(x)|}{\d(F_t(x), F_t(y))}\frac{\d(F_t(x), F_t(y))}{\d(y,x)}\\
&\leq&  \lmts{y}{x} \frac{|f_n\circ F_t(y)-f_n\circ F_t(x)|}{\d(F_t(x), F_t(y))} \lmts{y}{x}\frac{\d(F_t(x), F_t(y))}{\d(y,x)}\\
&\leq&  |\lip{f_n}|\circ F_t(x) e^{-Kt}.
\end{eqnarray*}

Then we obtain
\begin{eqnarray*}
\lmts{n}\infty \int |\lip{f_n\circ F_t}|^2\,\d\mm &\leq& \lmt{n}{\infty} e^{-2Kt}\int |\lip{f_n}|^2\circ F_t\,\d \mm\\
&=& e^{-2Kt}\int |\D f|^2\circ F_t\,\d \mm.
\end{eqnarray*}
Hence by definition we know $f\circ F_t \in W^{1,2}$.

Moreover, let $G$ be a weak limit of a subsequence of $(\lip{f_n\circ F_t})_n$ in $L^2$. By pointwise minimality of weak gradient, we get $|\D (f\circ F_t)| \leq G \leq e^{-Kt} |\D f|\circ F_t$ $\mm$-a.e..

\bigskip

\underline{$5)\Longrightarrow 3)$}:  The strategy used here is similar to the proofs in \cite{GH-C} and \cite{Kuwada10}, so we sketch the proof.  We just need to prove $3)$ for $\mu^1_0, \mu^2_0$ with the form $\mu^1_0=f\mm$ and $\mu^2_0=g\mm$, where $f, g$ are Lipschitz functions with bounded supports.
Now let $\varphi \in L^\infty\cap \Lip $ be a function with bounded support. We denote by $(\nu^0_r)_r$  the geodesic connecting $\mu^1_0$ and $\mu^2_0$,  and denote $(F_t)_\sharp \nu^0_r$ by $\nu^t_r$.  We also denote the velocity field of $(\nu^0_r)_r$ by $(\nabla \phi^0_r)$. 

For any $r\in [0,1], h>0$, we have
\begin{eqnarray*}
&&\Big  |{\int    Q_{r+h}(\varphi)\,\d \nu^t_{r+h}- \int    Q_r(\varphi)\,\d \nu^t_r}{}\Big |\\
&\leq &\Big | {\int    Q_{r+h}(\varphi)\,\d \nu^t_{r+h}- \int    Q_r(\varphi)\,\d \nu^t_{r+h}}{}\Big|+\Big | {\int    Q_{r}(\varphi)\,\d \nu^t_{r+h}- \int    Q_r(\varphi)\,\d \nu^t_r}{}\Big |\\
&\leq& C\int   \big|Q_{r+h}(\varphi)-     Q_r(\varphi)\big |\,\d\mm +\Big | {\int    Q_{r}(\varphi)\,\d \nu^t_{r+h}- \int    Q_r(\varphi)\,\d \nu^t_r}{}\Big |.
\end{eqnarray*}
Then we know that $r \mapsto \int    Q_r(\varphi)\,\d \nu^t_r$ is absolutely continuous, so it is differentiable almost everywhere.
Using weak Leibniz rule (cf. Lemma 4.3.4, \cite{AGS-G}) we have
\begin{eqnarray*}
&&\frac{\d}{\d r} \int    Q_r(\varphi)\,\d \nu^t_r\\
&=&\lmt{h}{0}\frac {\int    Q_{r+h}(\varphi)\,\d \nu^t_{r+h}- \int    Q_r(\varphi)\,\d \nu^t_r}{h}\\
&\leq &\lmts{h}{0}\frac {\int    Q_{r+h}(\varphi)\,\d \nu^t_{r}- \int    Q_r(\varphi)\,\d \nu^t_{r}}{h}+\lmts{h}{0}\frac {\int    Q_{r}(\varphi)\,\d \nu^t_{r}- \int    Q_r(\varphi)\,\d \nu^t_{r-h}}{h}
\end{eqnarray*}
for a.e.  $r$.

By Hamilton-Jacobi equation in Lemma \ref{lemma-hj1}, Proposition \ref{prop:c1} and dominated convergence theorem we have 
\[
\lmt{h}{0}\frac {\int    Q_{r+h}(\varphi)\,\d \nu^t_{r}- \int    Q_r(\varphi)\,\d \nu^t_{r}}{h}=\int -\frac12|\D Q_r(\varphi)|^2 \d \nu^t_r
\]
for a.e.  $r\in (0, 1)$.
From  Proposition \ref{prop:c1} we know
\[
\lmt{h}{0}\frac {\int    Q_{r}(\varphi)\,\d \nu^t_{r+h}- \int    Q_r(\varphi)\,\d \nu^t_r}{h}= \int  \la \nabla (Q_r(\varphi)\circ F_t), \nabla \phi^0_r \ra \d \nu^0_r
\]
for all $r$.

Combining with  the computations above  we obtain:
\[
\frac{\d}{\d r} \int    Q_r(\varphi)\,\d \nu^t_r \leq \int -\frac12|\D Q_r(\varphi)|^2 \d \nu^t_r+\int  \la \nabla (Q_r(\varphi)\circ F_t), \nabla \phi^0_r \ra \d \nu^0_r
\]
for a.e.  $r\in (0, 1)$.

Then we have the following estimate:
\begin{eqnarray*}
&&\int \varphi^c(y)\,\d  \mu^2_t(y)+\int  \varphi (x)\,\d \mu^1_t(x)\\
&=& \int_0^1 \frac{\d}{\d r} \int  \Big (Q_r(-\varphi) \d \nu^t_r\Big )\, \d r\\
 &\leq & \int_0^1 \int -\frac12|\D Q_r(-\varphi)|^2 \d \nu^t_r\, \d r\\
&&~~~~~~~~~~+ \int_0^1 \int  \la \nabla (Q_r(-\varphi)\circ F_t), \nabla \phi^0_r \ra \d \nu^0_r\, \d r\\
\big(\text{Young's inequality }\big)&\leq&  \int_0^1 \int -\frac12|\D Q_r(-\varphi)|^2 \d \nu^t_r\, \d r\\
&&~~~~~~~~~~+ \frac12  \int_0^1 \int  e^{2Kt}|\D(Q_r(-\varphi)\circ F_t)|^2\,\d \nu^0_r\d r \\ &+&\frac12 e^{-2Kt}\int^1_0\int |\D \phi^0_r|^2\,\d \nu^0_r\d r\\
\big(\text{hypothesis }~5)\big) &\leq& \frac12 e^{-2Kt}\int^1_0\int |\D \phi^0_r|^2\,\d \nu^0_r\d r\\
\big(\text{Proposition}~\ref{prop-conteq}+\text{Proposition}~\ref{prop:geod}\big)&= & \frac12 e^{-2Kt}W_2^2(\mu^1_0, \mu^2_0).
\end{eqnarray*}
Since $\varphi$ is arbitrary, we know 
$W_2^2(\mu^1_t, \mu^2_t) \leq e^{-2Kt}W_2^2(\mu^1_0, \mu^2_0)$.

\bigskip

\underline{$4)+5)\Longrightarrow 2)$}: Let $\mu_0, \nu_0 \in \mathcal{P}_2$ be probability measures  with compact supports and bounded  densities. We consider the RLFs $(\mu_t)_{t\in [0,T]}$ and $(\nu_t)_{t\in [0,T]}$ starting from $\mu_0, \nu_0$ respectively, where  $T>0$. From Proposition \ref{prop:rlf} we know the measures  $\mu_t, \nu_t, t\in [0, T]$ have uniformly bounded densities.    From $4)$ we know that $\mu_t, \nu_t$ have compact supports for all $ t\in [0, T]$,  and the supports of $\mu_t, \nu_t$,  $t\in [0, T]$ are uniformly bounded.

We denote by $(\theta_r)_r$ the geodesic  from $\mu_{0}$ to $\nu_{0}$, and denote  by $\nabla \phi_r$ the velocity field of $(\theta_{r})_r$. Let $\delta_r:[0,1] \mapsto [0,1]$ be a $C^1$ function (to be determined) with $\delta(i)=i, i=0,1$.  We define an interpolation  $(F_{tr })_\sharp \theta_{\delta_r}$ and denoted it by $\eta^t_r$.    

Then we  estimate $W^2_2(\mu_{0}, \nu_{t})$ using a similar method as we used in $5)\Longrightarrow 3)$.  For any $\varphi \in L^\infty\cap \Lip $  with bounded support, we have
\begin{eqnarray*}
&&\int \varphi^c(y)\,\d  \nu_{t}(y)+\int  \varphi (x)\,\d \mu_{0}(x)\\
&=&\int \varphi^c(y)\,\d  \eta^t_{1}(y)+\int  \varphi (x)\,\d \eta^t_{0}(x)\\
&=& \int_0^1 \frac{\d}{\d r} \int  \Big (Q_r(-\varphi) \circ F_{tr} \Big )\d \theta_{\delta_r}\, \d r\\
 &=& \int_0^1 \int -\frac12|\D Q_r(-\varphi)|^2\, \d \eta^t_r \d r
+ \int_0^1 \delta'_r \int  \la \nabla (Q_r(-\varphi)\circ F_{tr}), \nabla \phi_{\delta_r} \ra\, \d \theta_{\delta_r} \d r\\
 &-& t\int_0^1 \int  \la \nabla (Q_r(-\varphi), \nabla u \ra\, \d \eta^t_r \d r\\
 &=&  \int_0^1 \int -\frac12|\D \big(Q_r(-\varphi)+tu\big)|^2\, \d \eta^t_r \d r
+ \int_0^1 \delta'_r \int  \la \nabla \big ((Q_r(-\varphi)+tu)\circ F_{tr}\big ), \nabla \phi_{\delta_r} \ra\, \d \theta_{\delta_r} \d r\\
 &+& \int_0^1 \int  \frac12 t^2 |\D u|^2\, \d \eta^t_r \d r-t\int_0^1 \delta'_r \int  \la \nabla  (u\circ F_{tr} ), \nabla \phi_{\delta_r} \ra\, \d \theta_{\delta_r} \d r\\
 &\leq& \int_0^1  \frac12 \big(\delta'_r\big )^2 e^{-2Krt}\int |\D \phi_{\delta_r}|^2  \,\d \theta_{\delta_r} \d r +\int_0^1 \int  \Big[\frac12 t^2 \la \nabla (u\circ F_{tr} ), \nabla u\ra-t\delta'_r   \la \nabla  (u\circ F_{tr} ), \nabla \phi_{\delta_r} \ra \Big ]\, \d \theta_{\delta_r} \d r\\
 &:=& A(t)+tB(t).
\end{eqnarray*}

We then choose 
\[
\delta(r):=\frac{e^{2Krt}-1}{e^{2Kt}-1},
\]
so that $\delta'(r)=R_K(t)e^{2Krt}$ where 
$$
R_K(t):=\frac{2Kt}{e^{2Kt}-1}~~\text{if}~K \neq 0, ~~~~~R_0(t)=1.
$$
Then we have
\begin{eqnarray*}
A(t) &=& \int_0^1  \frac12 \big(\delta'_r\big )^2 e^{-2Krt}\int |\D \phi_{\delta_r}|^2  \,\d \theta_{\delta_r} \d r \\
&=& \frac{R_K(t)}{2}\int_0^1  \delta'_r \int |\D \phi_{\delta_r}|^2  \,\d \theta_{\delta_r} \d r\\
&=& \frac{R_K(t)}{2}\int_0^1  \int |\D \phi_{r}|^2  \,\d \theta_{r} \d r\\
&=& \frac12 R_K(t) W^2_2(\mu_0, \nu_0)
\end{eqnarray*}

It can be seen from Proposition \ref{prop:c1} and Proposition \ref{prop:c1-2}  that $B(t)$ is continuous in $t$. In fact, by direct computation we can even prove
\begin{eqnarray*}
U(\mu_0)-U(\nu_t) &=& \int_0^1 \frac{\d}{\d r} \int  \Big ((-u)\circ F_{rt} \Big )\,\d\theta_{\delta_r } \d r\\
&=&\int_0^1 \int  \Big[ t \la \nabla (u\circ F_{tr} ), \nabla u\ra-\delta'_r   \la \nabla  (u\circ F_{tr} ), \nabla \phi_{\delta_r} \ra \Big ]\, \d \theta_{\delta_r} \d r\\
& \geq & B(t).
\end{eqnarray*}

Combining the results above, we obtain
\[
W^2_2(\mu_0, \nu_t)\leq  R_K(t) W^2_2(\mu_0, \nu_0)+2tB(t).
\]
Subtracting $W_2^2(\mu_0,\nu_0)$ on both sides, and dividing $t>0$ on both sides. Letting $t \to 0$, together with the  formula $R_K(t)=1-\frac{Kt}2+o(t)$ we obtain
\[
\frac{\d^+}{\d t} W^2_2(\mu_0, \nu_t) \restr{t=0} \leq B(0) -\frac{K}{2}W^2_2(\mu_0, \nu_0).
\]
Since $t \mapsto W^2_2(\mu_0, \nu_t) $ is $C^1$ (cf. the proof of $1)\Longrightarrow 3)$), we know
\begin{equation}\label{eqB-th1}
-\int \la \nabla u, \nabla \varphi_{0,0}^c \ \ra\,\d\nu_0  \leq B(0) -\frac{K}{2}W^2_2(\mu_0, \nu_0),
\end{equation}
where $B(0)=-\int_0^1 \int      \la \nabla  u, \nabla \phi^0_r \ra \, \d \nu^0_{r} \d r$.

Using the same argument we can also prove
\begin{equation}\label{eqC-th1}
-\int \la \nabla u, \nabla \varphi_{0,0} \ \ra\,\d\mu_0  \leq C(0) -\frac{K}{2}W^2_2(\mu_0, \nu_0),
\end{equation}
where $C(0)=\int_0^1 \int      \la \nabla  u, \nabla \phi^0_{1-r} \ra \, \d \nu^0_{1-r} \d r=\int_0^1 \int      \la \nabla  u, \nabla \phi^0_{r} \ra \, \d \nu^0_{r} \d r=-B(0)$.

Combining \eqref{eqB-th1} and \eqref{eqC-th1} we obtain
\begin{equation}\label{eqD-th1}
\int \la \nabla u, \nabla \varphi_{0,0}^c \ \ra\,\d\nu_0+\int \la \nabla u, \nabla \varphi_{0,0} \ \ra\,\d\mu_0  \geq KW^2_2(\mu_0, \nu_0).
\end{equation}

Finally, by  approximation by compactly supported measures and  metric Brenier's theorem, we can prove that \eqref{eqD-th1} holds 
for all $\mu_0, \nu_0$ with bounded supports and bounded densities,  so $\nabla u$ is $K$-monotone.

\bigskip

\underline{$6) \Longleftrightarrow 1)$}: This is a direct consequence of Theorem \ref{mainth-0}.
\end{proof}

\bigskip

 \begin{remark}
Let $f$ be  a smooth function $f$ on a Riemannian manifold $(M, g)$,  and  $(\gamma_t)$ be a smooth curve. We know the function $ t \to f(\gamma_t)$ is smooth and
\[
\frac{\d^2}{\d t^2} f(\gamma_t)=\H_f(\gamma'_t, \gamma'_t)+\la \nabla_{\gamma'_t}\gamma'_t, \nabla f \ra.
\]
In particular, if $(\gamma_t)$ is a geodesic, we know  $\nabla_{\gamma'_t}\gamma'_t=0$. Then we obtain
\[
\frac{\d^2}{\d t^2} f(\gamma_t)=\H_f(\gamma'_t, \gamma'_t).
\]
So the second order derivative along geodesic  characterizes the convexity of a function $f$. 
 
 On $\rcdkn$ spaces, we can use the second  order differentiation formula developed by Gigli-Tamanini \cite{GT-S} to study the convexity of $H^{2,2}$ functions. 
 However, it is still unknown to us whether we have such formula in $\rcd$ case or not. 
\end{remark}

\bigskip

\begin{theorem}\label{mainth-2}
Let $M:=\ms$ be a $\rcd$  space, ${\bf b} \in L^2_\loc(TM)$. We assume there exits a unique  regular Lagrangian flow associated to $-{\bf b}$,   and denote it by $(F_t)$. Then the following descriptions are equivalent.

\begin{itemize}
\item [1)] ${\bf b}$ is $K$-monotone.
\item [2)] The exponential contraction in Wasserstein distance:
\[
W_2(\mu^1_t, \mu^2_t)\leq e^{-Kt}W_2(\mu^1_0, \mu^2_0), ~~\forall t>0
\]
holds for any two curves  $(\mu_t^1), (\mu_t^2)$ with velocity field $-{\bf b}$.

\item [3)] The regular Lagrangian flow $(F_t) $ associated to $-{\bf b}$ has a unique continuous representation $X$, so that $F_t(x)$ is uniquely defined everywhere.  Furthermore  the exponential contraction
\[
\d(F_t(x), F_t(y)) \leq e^{-Kt} \d(x, y)
\]
holds for any $x, y \in X$ and $t>0$.
\item [4)] For any $f\in W^{1,2}\ms$, we have $f\circ F_t \in W^{1,2}$ and
\[
|\D (f\circ F_t) |(x) \leq e^{-Kt}|\D f|\circ F_t(x), ~~\mm-\text{a.e.} ~x\in X.
\]
\end{itemize}

\end{theorem}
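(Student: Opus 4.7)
The overall strategy mirrors the proof of Theorem~\ref{mainth-1}, closing the cycle $1)\Rightarrow 2)\Rightarrow 3)\Rightarrow 4)\Rightarrow 2)$ and then proving $3)+4)\Rightarrow 1)$ to complete the equivalence. Most implications transfer directly from the proof of Theorem~\ref{mainth-1} because the corresponding arguments there (specifically $3)\Rightarrow 4)\Rightarrow 5)\Rightarrow 3)$) never exploit the gradient nature of $\nabla u$ and thus work verbatim with ${\bf b}$ in place of $\nabla u$. So the new work is concentrated at $1)\Rightarrow 2)$ and $3)+4)\Rightarrow 1)$.

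For $1)\Rightarrow 2)$, given two absolutely continuous curves $(\mu^i_t)$ with bounded compression and velocity field $-{\bf b}$, I would apply Proposition~\ref{prop:derw2} separately in each variable and combine them to obtain
\[
\frac{\d}{\d t}\,\frac12 W_2^2(\mu^1_t,\mu^2_t) = -\int\la {\bf b},\nabla\varphi_t\ra\,\d\mu^1_t - \int\la {\bf b},\nabla\varphi^c_t\ra\,\d\mu^2_t
\]
for a.e.\ $t$, where $\varphi_t$ is a Kantorovich potential from $\mu^1_t$ to $\mu^2_t$. Hypothesis $1)$ bounds the right-hand side by $-K W_2^2(\mu^1_t,\mu^2_t)$, and Gronwall's inequality yields the contraction. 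The intermediate implications are handled as follows: $2)\Rightarrow 3)$ uses the Wasserstein contraction together with the non-branching property of RLFs from Proposition~\ref{prop:rlf} to extend $F_t$ continuously to single-point data, exactly as in $3)\Rightarrow 4)$ of Theorem~\ref{mainth-1}; $3)\Rightarrow 4)$ uses the local Lipschitz estimate $\lip(f_n\circ F_t)(x)\leq e^{-Kt}\lip(f_n)\circ F_t(x)$ applied to a Lipschitz approximating sequence of $f\in W^{1,2}$, followed by passing to an $L^2$-weak limit and invoking minimality of the weak upper gradient; and $4)\Rightarrow 2)$ is the Hopf-Lax duality argument using Young's inequality, the Hamilton-Jacobi equation (Lemma~\ref{lemma-hj1}), and Proposition~\ref{prop:geod}.

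The main obstacle is $3)+4)\Rightarrow 1)$. I would adapt the argument of $4)+5)\Rightarrow 2)$ in Theorem~\ref{mainth-1}: given $\mu_0,\nu_0\in\mathcal{P}_2$ with bounded densities and bounded supports, let $(\theta_r)$ be the geodesic from $\mu_0$ to $\nu_0$ with velocity $\nabla\phi_r$, and set $\eta^t_r:=(F_{tr})_\sharp\theta_{\delta_r}$ with reparametrization $\delta_r=(e^{2Krt}-1)/(e^{2Kt}-1)$ so that $\delta'_r=R_K(t)e^{2Krt}$. For a Lipschitz, bounded, bounded-supported $\varphi$, I would expand
\[
\int\varphi^c\,\d\nu_t+\int\varphi\,\d\mu_0 = \int_0^1\frac{\d}{\d r}\int Q_r(-\varphi)\circ F_{tr}\,\d\theta_{\delta_r}\,\d r
\]
using Hamilton-Jacobi for $\partial_r Q_r(-\varphi)$ and the chain rule $\partial_r(g\circ F_{tr})=-t\la {\bf b},\nabla g\ra\circ F_{tr}$ along the RLF. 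The key difference from the gradient case is that, with ${\bf b}$ not assumed to be a gradient, we cannot complete the square on $Q_r(-\varphi)+tu$ as in Theorem~\ref{mainth-1}. Instead, I would apply Young's inequality directly to
\[
-\tfrac12|\D Q_r(-\varphi)|^2\circ F_{tr} + \delta'_r\la\nabla(Q_r(-\varphi)\circ F_{tr}),\nabla\phi_{\delta_r}\ra,
\]
using $4)$ in the form $|\D(g\circ F_{tr})|\leq e^{-Krt}|\D g|\circ F_{tr}$ to bound this expression by $\tfrac12(\delta'_r)^2 e^{-2Krt}|\D\phi_{\delta_r}|^2$, while keeping the cross term $-t\int\la {\bf b},\nabla Q_r(-\varphi)\ra\,\d\eta^t_r$ as a residual.

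Integrating in $r$ and using the normalization $\int_0^1\delta'_r\,\d r=1$, the Young contribution yields exactly $\tfrac12 R_K(t)W_2^2(\mu_0,\nu_0)$. Subtracting $\tfrac12 W_2^2(\mu_0,\nu_0)=\int\varphi^c\,\d\nu_0+\int\varphi\,\d\mu_0$, dividing by $t$, and letting $t\downarrow 0$ (noting $R_K(t)=1-Kt/2+o(t)$ and $\delta_r\to r$), the left-hand side converges to $-\int\la {\bf b},\nabla\varphi^c\ra\,\d\nu_0$ because $(\nu_t)$ solves the continuity equation with velocity $-{\bf b}$ and the curve is $C^1$ by Proposition~\ref{prop:c1-2}. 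Running the symmetric argument with $\mu_0,\nu_0$ swapped gives the analogue with $\nabla\varphi$, and averaging the two (with a reparametrization that causes the residual $\int\la {\bf b},\nabla\phi^0_r\ra\,\d\theta_r$ terms to cancel as in Theorem~\ref{mainth-1}) delivers
\[
\int\la {\bf b},\nabla\varphi\ra\,\d\mu_0+\int\la {\bf b},\nabla\varphi^c\ra\,\d\nu_0 \geq K W_2^2(\mu_0,\nu_0),
\]
i.e.\ the $K$-monotonicity of ${\bf b}$. The delicate point throughout is establishing the convergence of the residual term $\int_0^1\int\la {\bf b},\nabla Q_r(-\varphi)\ra\,\d\eta^t_r\,\d r$ as $t\to 0$, which relies on stability of Kantorovich potentials (Lemma~2.3 of \cite{AGMR-R}) together with the ``weak-strong'' convergence in Lemma~\ref{lemma:weakstrong} applied along $\eta^t_r\to\theta_r$.
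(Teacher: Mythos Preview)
Your proposal is correct and follows essentially the same route as the paper. The paper also proves $2)\Rightarrow 3)\Rightarrow 4)\Rightarrow 2)$ and $4)\Rightarrow 1)$ by referring back to Theorem~\ref{mainth-1}, and treats $1)\Rightarrow 2)$ separately via Proposition~\ref{prop:derw2} and Gr\"onwall; your observation that the completion-of-square $Q_r(-\varphi)+tu$ must be replaced by a direct Young estimate is exactly the adaptation needed (and implicit in the paper's ``same way'' instruction), and your residual $B(0)=-\int_0^1\int\la{\bf b},\nabla Q_r(-\varphi)\ra\,\d\theta_r\,\d r$ coincides with the paper's since $\nabla Q_r(-\varphi)$ is the geodesic velocity $\nabla\phi_r$ on $\supp\theta_r$.

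Two technical points you skate over that the paper handles explicitly in $1)\Rightarrow 2)$: first, Proposition~\ref{prop:derw2} requires the reference measure $\nu$ to have bounded support, so one must first localize by restricting the lifting $\Pi$ to a compact set $\Gamma_\epsilon$ of trajectories (this is the paper's opening reduction); second, combining the two one-sided derivatives into $\frac{\d}{\d t}\tfrac12 W_2^2(\mu_t,\nu_t)$ along the diagonal is not automatic---the paper first upgrades \eqref{eq6-th2} and \eqref{eq7-th2} from ``a.e.\ $t$'' to ``all $t$'' by proving continuity of $t\mapsto\int\la{\bf b},\nabla\varphi_{t,s}\ra\,\d\mu_t$ (via stability of Kantorovich potentials and Lemma~\ref{lemma:weakstrong}), and then invokes the Leibniz-type inequality of Lemma~4.3.4 in \cite{AGS-G} to get the differential inequality on the diagonal. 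These are routine but should be mentioned.
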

\begin{proof} We can prove $ 2) \Longrightarrow 3)\Longrightarrow 4)\Longrightarrow 2)$ and  $4) \Longrightarrow 1)$
 in the same ways as in the proof of Theorem \ref{mainth-1}.

\underline{$1)\Longrightarrow 2)$}: Let $\mu_0, \nu_0 \in \mathcal{P}_2$ be two measures with bounded supports and bounded densities, $(\mu_t), (\nu_t)$ be the solutions to the continuity equation with velocity field $-{\bf b}$,   with initial  datum $\mu_0$ and $\nu_0$ respectively.  By definition, we know that $\mu_t, \nu_t$ have bounded densities  for any $t>0$.  Fix  $T>0$, we denote the lifting of $(\mu_t)_{t\in [0,T]}$ by
$\Pi\in \mathcal{P}({\rm AC}([0,T],X))$. Let $\Gamma \subset {\rm AC}([0,T],X)$ be the support of $\Pi$. For any $\epsilon >0$,  we can find $\Gamma_\epsilon \subset \Gamma$
which is   compact in ${\rm C}([0,T],X)$ such that  $\Pi(\Gamma\setminus \Gamma_\epsilon) <\epsilon$,  and $\{\gamma_t: \gamma\in \Gamma_\epsilon, t\in [0,1]\} \subset B_{R}(x_0)$ for some $x_0 \in X$ and $R> \frac1{\epsilon}$.  Then we define 
$$
\mu^\epsilon_t:= (e_t)_\sharp \Big (\frac1{\Pi(\Gamma_\epsilon)}\Pi\restr{\Gamma_\epsilon}\Big ),~~~~\epsilon>0,~~t\in [0,T].
$$
It can be seen that $\supp \mu^\epsilon_t =e_t(\Gamma_\epsilon)$ is compact for any $t\in [0, T]$ and 
$$
\lmt{\epsilon}{0}W_2(\mu_0, \mu_0^\epsilon)=0.
$$
So without loss of generality we can assume that $\mu_t, \nu_t$ support on compact sets for any $t\in [0,T]$. 
Furthermore, we may also assume that  $\mu_t, \nu_t$ have  uniformly bounded supports for  $t\in [0, T]$. 

 Then for any $s\geq 0$, by Proposition \ref{prop:derw2} we have
\begin{equation}\label{eq6-th2}
\frac{\d }{\d t} \frac12 W^2_2(\mu_t, \nu_s)=-\int \la {\bf b}, \nabla \varphi_{t,s}\ra\,\d \mu_t
\end{equation}
for a.e. $t>0$, where $\varphi_{t,s}$ is a Kantorovich potential from $\mu_t$ to $\nu_s$. 
Similarly, fix a $t$ we know
\begin{equation}\label{eq7-th2}
\frac{\d }{\d s} \frac12 W^2_2(\mu_t, \nu_s)=-\int \la {\bf b}, \nabla \phi_{s,t}\ra\,\d \nu_s
\end{equation}
for a.e. $s>0$, where $\phi_{s,t}$ is the Kantorovich potential from $\nu_s$ to $\mu_t$.

Now we claim that  $t\mapsto -\int \la {\bf b}, \nabla \varphi_{t,s}\ra\,\d \mu_t$ is continuous for any $s$.  We just need to prove 
$$
\lmt{h}{0}\int \la {\bf b}, \nabla \varphi_{t+h,s}\ra\,\d \mu_{t+h}=\int \la {\bf b}, \nabla \varphi_{t,s}\ra\,\d \mu_t
$$
for any given $t$.

By Proposition \ref{prop:c1-2} and the compactness assumption on $\supp \nu_s$,  we can apply Lemma 2.3 in \cite{AGMR-R} to obtain the compactness of Kantorovich potentials. Combining with Proposition \ref{prop:c1-2}   we know the convergence from Lemma \ref{lemma:weakstrong}.

Similarly, we can prove  that $s \mapsto \int \la {\bf b}, \nabla \phi_{s,t}\ra\,\d \nu_s$ is continuous. Therefore we know \eqref{eq6-th2} and \eqref{eq7-th2} hold for all $t$ and  $s$ respectively. Then we have
\begin{equation}\label{eq8-th2}
\frac{\d }{\d s} \frac12 W^2_2(\mu_t, \nu_s)\restr{s=t}=-\int \la {\bf b}, \nabla \phi_{t,t}\ra\,\d \nu_t
\end{equation}
and
\begin{equation}\label{eq9-th2}
\frac{\d }{\d r} \frac12 W^2_2(\mu_r, \nu_t)\restr{r=t}=-\int \la {\bf b}, \nabla \varphi_{t,t}\ra\,\d \mu_t.
\end{equation}
Furthermore, we know $t \mapsto W^2_2(\mu_t, \nu_t)$ is differentiable for a.e. $t\in [0, T]$. Using the formula in Lemma 4.3.4, \cite{AGS-G} we have
\begin{eqnarray*}
\frac{\d }{\d t} \frac12 W^2_2(\mu_t, \nu_t)&\leq&\frac{\d }{\d r} \frac12 W^2_2(\mu_r, \nu_t)\restr{r=t}+\frac{\d }{\d s} \frac12 W^2_2(\mu_t, \nu_s)\restr{s=t}\\
&=&- \int \la {\bf b}, \nabla \varphi_{t,t}\ra\,\d \mu_t-\int \la {\bf b}, \nabla \phi_{t,t}\ra\,\d \nu_t
\end{eqnarray*}
for a.e. $t\in [0, T]$.

From the definition of $K$-monotonicity we know
\[
\frac{\d }{\d t} \frac12 W^2_2(\mu_t, \nu_t) \leq -{K}W^2_2(\mu_t, \nu_t)
\]
for a.e. $t\in [0, T]$.
Finally,  by Gr\"onwall's inequality  we  obtain the exponential contraction 
\begin{equation}\label{eq10-th2}
W_2(\mu_t, \nu_t)\leq e^{-Kt}W_2(\mu_0, \nu_0)
\end{equation}
for any $t\in [0, T]$.

\end{proof}

\begin{remark}\label{remark:locallip}
One would ask if the  infinitesimal $K$-monotonicity of ${\bf b}$ is equivalent to the characterizations in the Theorem \ref{mainth-2}.  In general,  the methods in Bakry-\'Emery theory can not be used here. For example, we even do not know the Sobolev regularity of $f\circ F_t$ when $f\in W^{1,2}$ (c.f. Lemma \ref{lemma-rlf-1}).  But in some special situations, we can achieve this goal.

Case 1. When ${\bf b}$ is a harmonic vector field on a ${\rm RCD}(0, N)$ space,  it is proved by Gigli-Rigoni \cite{GR-R} that $f\circ F_t \in {\rm TestF}$  when $f\in {\rm TestF}$, and  $F_t$ induces an isometry. 

Case 2.   On $\rcdkn$ spaces, using the second  order differentiation formula developed by Gigli-Tamanini  \cite{GT-S} we can easily prove that  infinitesimal $K$-monotonicity is equivalent to $K$-monotonicity.
\end{remark}

\bigskip

At the end of this section, we show that the $K$-monotonicity is stable under  measured Gromov-Hausdorff convergence. For simplicity, we adopt the notions from \cite{AST-W} without further explanation.  Without loss of generality, we say that $\rcd$ spaces $M_n:=(X, \d, \mm_n)$ converges to $M:=\ms$ in measured Gromov-Hausdorff topology if $\mm_n \to \mm$ weakly. 

We define the countable class
$$
\mathcal{H}_{\mathbb{Q}^+}\mathcal{A}_{\rm bs}:=\Big\{ \mathcal{H}_t f:f\in \mathcal{A}_{\rm bs}, t\in \mathbb{Q}^+\Big\}\subset \Lip \cap L^\infty,
$$
where $\mathcal{A}_{\rm bs}$ is a sub-algebra of $\mathcal{A}$ consisting of functions with  bounded support,  $\mathcal{A} \subset \Lip_{\rm b}(X)$ is the smallest set containing   
$$
\min \Big\{\d(\cdot, x), k\Big\}~~~k\in \mathbb{Q}\cap [0,\infty],~x\in D, D~\text{is dense in}~X.
$$
which is a $\mathbb{Q}$-vector space and is stable under products and lattice operations.  It can be seen (cf. \cite{AST-W}) that $\mathcal{H}_{\mathbb{Q}^+}\mathcal{A}_{\rm bs}$ is dense in $W^{1,2}$.

\begin{corollary}[Stability of $K$-monotonicity]
Let ${\bf b}_n \in W^{1,2}_C(TM_n), n\in \N$ be a sequence of velocity fields with $\sup_n\|{\bf b}_n\|_{L^2(X,\mm_n)}<\infty$ and $\sup_n \|{\rm div}{\bf b}_n\|_{L^\infty(X, \mm_n)} <\infty$. Assume  that $({\bf b}_n)_{ n\in \N}$ are  $K$-monotone, $\la {\bf b}_n, \nabla f\ra\mm_n \to \la {\bf b}, \nabla f\ra\mm$ as measures for all $f\in \mathcal{H}_{\mathbb{Q}^+}\mathcal{A}_{\rm bs}$, and
\[
\lmts{n}{\infty}\int |{\bf b}_n|^2\,\d\mm_n \leq \int |{\bf b}|^2\,\d\mm.
\]  Then  ${\bf b}$ is $K$-monotone. 
 
\end{corollary}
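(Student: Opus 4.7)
The plan is to fix a pair of test measures $\mu^1,\mu^2 \in \mathcal{P}_2(X)$ with bounded densities $\rho^i\in L^\infty(\mm)$ and bounded supports, realise the $K$-monotonicity inequality for ${\bf b}$ as a limit of the analogous inequalities for ${\bf b}_n$, and close the limit using the convergence hypothesis on the dense class $\mathcal{H}_{\mathbb{Q}^+}\mathcal{A}_{\rm bs}\subset W^{1,2}(M)$.

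First I would construct approximants $\mu^i_n=\rho^i_n\mm_n \in \mathcal{P}_2(X)$ with $\sup_n\|\rho^i_n\|_{L^\infty}<\infty$, uniformly bounded supports in some $B_R(x_0)$, and $\mu^i_n\to \mu^i$ in $W_2$. A natural recipe is to pick a bounded continuous, compactly supported representative of $\rho^i$ and renormalise it against $\mm_n$; the weak convergence $\mm_n\weakto \mm$ together with the support bound forces $W_2$-convergence. Next I would pick Kantorovich potentials $(\varphi_n,\varphi_n^c)$ for $(\mu^1_n,\mu^2_n)$ that are uniformly bounded and Lipschitz on a common ball (with Lipschitz constant controlled by the diameter of the supports), and extract via Arzel\`a--Ascoli a subsequential uniform limit $\varphi_\infty$ that is a Kantorovich potential for $(\mu^1,\mu^2)$; by the locality of potentials on supports (Proposition~\ref{prop:brenier}), the $K$-monotonicity inequality for ${\bf b}$ is insensitive to replacing the initial $\varphi$ by $\varphi_\infty$, so I may take $\varphi=\varphi_\infty$ throughout.

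With these choices, the $K$-monotonicity of ${\bf b}_n$ reads
\[
\int \la {\bf b}_n, \nabla \varphi_n\ra \rho^1_n\,\d\mm_n + \int \la {\bf b}_n, \nabla \varphi_n^c\ra \rho^2_n\,\d\mm_n \geq K\, W_2^2(\mu^1_n, \mu^2_n),
\]
whose right-hand side passes to $K\,W_2^2(\mu^1,\mu^2)$ immediately. The heart of the proof is to pass the left-hand side to $\int \la{\bf b},\nabla \varphi\ra \,\d\mu^1+\int \la{\bf b},\nabla\varphi^c\ra\,\d\mu^2$. Here I would exploit that the two hypotheses on $({\bf b}_n)$ together encode a strong $L^2$-type convergence of ${\bf b}_n$ to ${\bf b}$ along $\mm_n\weakto \mm$: the weak measure identification $\la{\bf b}_n,\nabla f\ra\mm_n \weakto \la{\bf b},\nabla f\ra\mm$ on the dense class $\mathcal{H}_{\mathbb{Q}^+}\mathcal{A}_{\rm bs}$, upgraded by $\lmts{n}{\infty}\int |{\bf b}_n|^2\,\d\mm_n\leq \int |{\bf b}|^2\,\d\mm$, is precisely the Mosco-type notion of convergence of $L^2$-tangent modules used in \cite{AST-W}.

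The main obstacle is the simultaneous limit in the three factors $(\rho^i_n,\nabla \varphi_n^{(c)},{\bf b}_n)$; I would handle it by a three-line approximation. Given $\eta>0$, pick $f_\eta\in \mathcal{H}_{\mathbb{Q}^+}\mathcal{A}_{\rm bs}$ with $\|\nabla \varphi_\infty-\nabla f_\eta\|_{L^2(\mm)}<\eta$. Mosco convergence of the Cheeger energies under mGH convergence, combined with the uniform Lipschitz and support bounds on $\varphi_n$, yields $\lmts{n}{\infty} \|\nabla \varphi_n - \nabla f_\eta\|_{L^2(\mm_n)}\leq \|\nabla \varphi_\infty - \nabla f_\eta\|_{L^2(\mm)}\leq \eta$. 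Cauchy--Schwarz with $\sup_n \|{\bf b}_n\|_{L^2(\mm_n)}<\infty$ and $\sup_n\|\rho^1_n\|_{L^\infty}<\infty$ then bounds the remainder $\int \la {\bf b}_n,\nabla(\varphi_n-f_\eta)\ra \rho^1_n\,\d\mm_n$ by a constant times $\eta$, while the main term $\int \la {\bf b}_n,\nabla f_\eta\ra \rho^1_n\,\d\mm_n$ converges to $\int \la {\bf b},\nabla f_\eta\ra \rho^1\,\d\mm$ by the weak measure hypothesis (after a harmless further approximation of $\rho^1_n$ by a bounded continuous function, which is controlled by the uniform $L^\infty$-bound and $\mm_n\weakto\mm$). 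Sending $\eta\to 0$ passes the first integral to $\int\la{\bf b},\nabla\varphi\ra\,\d\mu^1$; the argument for $\varphi_n^c$ is identical, and combining the two passages yields the $K$-monotonicity of ${\bf b}$.
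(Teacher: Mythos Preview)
Your approach is genuinely different from the paper's. The paper does not work with the defining inequality of $K$-monotonicity at all: instead it invokes Theorem~\ref{mainth-2} to replace $K$-monotonicity by the equivalent characterization~2) (exponential $W_2$-contraction of the flows), then quotes Theorem~8.2 of \cite{AST-W}, which says exactly that under the stated hypotheses the solutions of the continuity equations driven by ${\bf b}_n$ converge (in measure) to those driven by ${\bf b}$. Lower semicontinuity of $W_2$ under weak convergence then transfers the contraction inequality to the limit. This is a two-line argument because the hard work (the equivalence theorem, and the AST--W stability of flows) has already been packaged.

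Your direct route---passing to the limit in $\int \la {\bf b}_n,\nabla\varphi_n\ra\,\d\mu^1_n + \int \la {\bf b}_n,\nabla\varphi_n^c\ra\,\d\mu^2_n \geq K W_2^2(\mu^1_n,\mu^2_n)$---is a reasonable strategy, but the step you label ``Mosco convergence'' is not correct as written. Mosco convergence of Cheeger energies gives a $\liminf$ inequality for arbitrary converging sequences and the existence of a recovery sequence realising the $\limsup$; it does \emph{not} give $\lmts{n}{\infty}\|\nabla(\varphi_n-f_\eta)\|_{L^2(\mm_n)} \leq \|\nabla(\varphi_\infty-f_\eta)\|_{L^2(\mm)}$ for your particular sequence $\varphi_n$. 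What you actually need is a strong--weak pairing argument in the sense of \cite{AST-W}: the hypotheses on $({\bf b}_n)$ encode strong $L^2$-convergence of ${\bf b}_n$ to ${\bf b}$ along $\mm_n\to\mm$, while the uniform Lipschitz bound and uniform convergence of $\varphi_n$ yield only \emph{weak} convergence of $\nabla\varphi_n$; the product then converges because one factor is strong and the other weak (with the bounded continuous densities $\rho^i_n\to\rho^i$ absorbed into the strong side). If you rewrite the passage to the limit in those terms your argument can be completed, but as stated the Mosco justification is a gap. The paper's route sidesteps this entirely by never touching Kantorovich potentials.
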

\begin{proof}
From Theorem 8.2  \cite{AST-W}, we know the solutions to the continuity equation with velocity field ${\bf b}_n$ converges  (in measure) to the one with velocity field ${\bf b}$. We then apply $2)$ of Theorem \ref{mainth-2} with ${\bf b}_n$.  By lower-semicontinuity of Wasserstein distance w.r.t weak topology, we know that $K$-monotonicity of ${\bf b}_n$ implies $K$-monotonicity of ${\bf b}$.
\end{proof}

\section{Applications}

In this section, we apply  Theorem \ref{mainth-1} with two special functions.  Our aim is not  to  give complete proofs to the rigidity theorems which have  already been perfectly solved, but to present how to use our result to connect the differential structure and metric structure of metric measure spaces.

\bigskip

\underline{Example 1: Splitting}

\begin{theorem}  Let $\ms$ be a $\rcd$  metric measure space. If  $\H_u=0$, and $|\D u|=1$, then there exists a metric space $Y$ such that $X$ is isometric to $Y \times \R$.
\end{theorem}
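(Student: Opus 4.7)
The plan is to leverage Theorem \ref{mainth-1} with $K=0$ applied to both $u$ and $-u$, and then upgrade the resulting isometric $\R$-action into a Pythagorean product decomposition.

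\textbf{Step 1 (Isometric group from a two-sided contraction).} Since $\H_u=0$, both $u$ and $-u$ satisfy $\H_{\pm u}\geq 0$, i.e.\ they are infinitesimally $0$-convex. The assumptions needed for Theorem~\ref{mainth-1} (in particular Assumption~\ref{assumption:u}) can be checked: $u$ is globally $1$-Lipschitz because $|\D u|=1$, so it has at most linear growth and $\nabla u\in L^\infty(TM)\subset L^2_{\mathrm{loc}}(TM)$ with $|{\rm div}\nabla u|=|\Delta u|=|\tr\H_u|=0$, so Proposition~\ref{prop:rlf-2} gives a unique RLF associated to $\pm\nabla u$. Applying the equivalence $6)\Leftrightarrow 4)$ of Theorem \ref{mainth-1} to $u$ and to $-u$, I obtain a $1$-Lipschitz semigroup $(F_t)_{t\geq 0}$ associated to $-\nabla u$ and a $1$-Lipschitz semigroup $(G_t)_{t\geq 0}$ associated to $+\nabla u$; by uniqueness of the RLF (Proposition~\ref{prop:rlf} $4)$), $F_t\circ G_t=\mathrm{id}$, so each $F_t$ is a bijective isometry and $(F_t)_{t\in\R}$ is a one-parameter group of isometries of $X$.

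\textbf{Step 2 (Orbits are unit-speed geodesic lines).} The chain rule along the RLF yields $\frac{\d}{\dt}u(F_t(x))=\langle-\nabla u,\nabla u\rangle\circ F_t=-1$, hence $u(F_t(x))=u(x)-t$. The orbit has metric speed $|{-}\nabla u|\circ F_t=1$ a.e., so $\d(F_s(x),F_t(x))\leq|s-t|$, while the $1$-Lipschitz property of $u$ gives the reverse inequality $\d(F_s(x),F_t(x))\geq|u(F_s(x))-u(F_t(x))|=|s-t|$. Therefore each orbit is a unit-speed geodesic line.

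\textbf{Step 3 (Set-theoretic splitting).} Define $Y:=u^{-1}(0)$ (with the induced metric $\d_Y$), and the map $\Phi\colon Y\times\R\to X$ by $\Phi(y,t):=F_{-t}(y)$. The inverse is $x\mapsto(F_{u(x)}(x),u(x))$: indeed $u(F_{u(x)}(x))=u(x)-u(x)=0$, and uniqueness of orbits shows $\Phi$ is a bijection.

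\textbf{Step 4 (The Pythagorean identity --- the main obstacle).} I expect the hard part to be showing
\[
\d\bigl(F_{-s}(y_1),F_{-t}(y_2)\bigr)^2=(s-t)^2+\d_Y(y_1,y_2)^2,\qquad y_1,y_2\in Y.
\]
By isometric invariance of $\d$ under $F_r$, this reduces to $\d(y_1,F_r(y_2))^2=r^2+\d(y_1,y_2)^2$. The triangle inequality only gives the (non-Pythagorean) bound $\d(y_1,F_r(y_2))\leq|r|+\d(y_1,y_2)$. To obtain the sharp identity, I would apply the $0$-monotonicity equality $\int\la\nabla u,\nabla\varphi\ra\,\d\mu^1+\int\la\nabla u,\nabla\varphi^c\ra\,\d\mu^2=0$ (which follows from Theorem~\ref{mainth-1} $2)$ applied to both $u$ and $-u$) to measures $\mu^1_\eps,\mu^2_\eps$ regularizing $\delta_{y_1}$ and $\delta_{F_r(y_2)}$. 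Together with metric Brenier's theorem (Proposition~\ref{prop:brenier}) and the first variation formula (Proposition~\ref{prop:derw2}), this forces the gradient of the Kantorovich potential to split orthogonally into an $r\nabla u$ component (accounting for the vertical displacement) and a component tangent to the level sets of $u$. Taking $\eps\to 0$ and using that the flow lines are geodesics orthogonal to $Y$ yields the Pythagorean identity. Finally, $(Y,\d_Y)$ inherits completeness, separability and geodesic completeness from $X$ via the retraction $\pi(x):=F_{u(x)}(x)$, showing $\Phi$ is the desired isometry between $Y\times\R$ and $X$.
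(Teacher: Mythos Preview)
Your Steps 1--3 match the paper's argument closely: the paper also applies Theorem~\ref{mainth-0} and Theorem~\ref{mainth-1} to both $u$ and $-u$, obtains a two-sided $1$-Lipschitz bound, and concludes that $(F_t)_{t\in\R}$ is a group of isometries with $|\dot F_t|=1$ and $u\circ F_t=u-t$.

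The divergence, and the gap, is in Step~4. Your proposed route to the Pythagorean identity via the $0$-monotonicity \emph{equality} is not a proof as written. The equality
\[
\int\la\nabla u,\nabla\varphi\ra\,\d\mu^1+\int\la\nabla u,\nabla\varphi^c\ra\,\d\mu^2=0
\]
is an integral identity; it does not by itself force any pointwise orthogonal decomposition of $\nabla\varphi$. You assert that, combined with Brenier's theorem and the first variation formula, ``this forces the gradient of the Kantorovich potential to split orthogonally into an $r\nabla u$ component and a component tangent to the level sets'', but no mechanism is given for extracting pointwise information from the integral constraint, nor for surviving the limit $\eps\to0$ (Kantorovich potentials between approximating measures need not converge to anything useful at Dirac masses). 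In a non-smooth space there is also no a~priori notion of ``tangent to the level set'' that would let you carry out such a decomposition.

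The paper does \emph{not} attempt to prove the Pythagorean identity directly. Instead, after establishing that $F_t$ is an isometry of $X$ mapping $u^{-1}(0)$ to $u^{-1}(t)$ and that $u^{-1}(0)$ is totally geodesic, it invokes Section~6 of \cite{G-S}: one shows that the map $\Phi:\R\times u^{-1}(0)\to X$, $\Phi(t,x)=F_t(x)$, induces an isometry between the \emph{Sobolev spaces} $W^{1,2}(\R\times u^{-1}(0))$ and $W^{1,2}(X)$ (this is where the gradient estimate $5)$ of Theorem~\ref{mainth-1}, applied to both $u$ and $-u$, enters), and then the Sobolev-to-Lipschitz property upgrades this to a metric isometry. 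That last step is exactly what converts first-order (Sobolev/gradient) information into the Pythagorean metric identity, and it is the piece your Step~4 is missing.
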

 
 \begin{proof}
    From Proposition \ref{prop:rlf-2} we know that  the regular Lagrangian flows associated to  $\nabla u$ and $-\nabla u$ exist, which  are denoted by  $(F^+_t)_{t\geq 0}$ and $({F}^-_t)_{t \geq 0}$ respectively.  By uniqueness of the RLF we get $F^+_t ({F}^-_s (x))=F^-_s(F^+_t(x))= F^{{\rm sign}(t-s)}_{|t-s|}$, where ${\rm sign} (t-s)$ is $``+"$ if  $t-s\geq 0$ and is $``-"$ if $t-s<0$.  We define
\begin{equation}
F_t(x):=
\left\{
\begin{array}{ll}
F^+_t(x)~~~~~t\geq 0,\\
F^-_t(x)~~~~~ t<0.
\end{array}\right.
\end{equation}

Since $|\D f|=1$ we know $\Lip(f)=1$ from Sobolev-to-Lipschitz property. Then we can apply  Theorem \ref{mainth-0} and Theorem \ref{mainth-1}
with infinitesimally  $0$-convex functions $ u$ and $- u$. From $4)$ of Theorem \ref{mainth-1} we know
\[
\d(F_t (x), F_{t}(y)) \leq \d(x, y)
\]
for any $x, y \in X$,  $t\in \R$. So we have
\[
\d(F_t (x), F_{t}(y)) \leq \d(x, y)=\d (F_{-t}(F_t(x)), F_{-t}(F_t(y)))  \leq \d(F_t (x), F_{t}(y)) 
\]
for any $x, y \in X$,   $t\in \R$. Hence $\d(F_t(x), F_t(y))=\d(x ,y)$ for any $x, y \in X$,   $t\in \R$.
Therefore $F_t$ induces an isometry between $u^{-1}(0)$ and $u^{-1}(t)$. Combining with the fact that $|\dot{F}_t|(x)=1$,  we know  $F_t$ induces a translation on the fibre $(F_t(x_0))_t$   for any $x_0 \in u^{-1}(0)$. It can also be checked that $u^{-1}(0)$ is totally geodesic.

Finally, it can be proved that (see Section 6, \cite{G-S} for details) the map $\Phi:  \R \times u^{-1}(0)\ni (t, x)    \mapsto F_t(x) \in X$ induces an isometry between the Sobolev spaces $W^{1,2}(\Phi^{-1}(X))$ and $W^{1,2}(\R \times u^{-1}(0))$.  Then from Sobolev-to-Lipschitz property we know that  $\Phi$ is an isometry.

 \end{proof}

\begin{remark}
In ``splitting theorem"  for ${\rm RCD}(0, N)$ spaces (cf. \cite{Cheeger-Gromoll-splitting} and  \cite{G-S}), the target function $u$ is the Buseman function associated with a  line, which is a harmonic function. From Corollary \ref{coro:hessianbound} we know $\H_u=0$. In ``spectral gap rigidity  theorem"  for $\rcd$ spaces with $k\geq  0$ (cf. \cite{GKKO-R}),  the target function $u$ is a solution to the equation $\Delta u=-ku$, by Corollary \ref{coro:hessianbound} we also have $\H_u=0$. 
\end{remark}

\bigskip

 \underline{Example 2: Volume cone  implies metric cone}

\begin{theorem} \label{theorem-cone} Let $\ms$ be a ${\rm RCD}(0, N)$ space with $\mm \ll \mathcal{H}^N$.  If  $\Delta u=N$,  $|\D u|^2=2u$ and $u\leq C\d^2(\cdot, O)$ for some $O\in X$, $C>0$, then  $(X, \d)$ admits a warped product-like structure.
\end{theorem}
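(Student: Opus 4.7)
The plan is to extract from the hypotheses that $u$ is infinitesimally $1$-convex with $\H_u = \mathrm{Id}$ $\mm$-a.e., apply Theorem \ref{mainth-0} and Theorem \ref{mainth-1} to both $u$ and $-u$ to obtain a two-sided metric contraction of the associated gradient flows, and finally identify $(X,\d)$ as a metric cone over a level set of $u$. To show $\H_u = \mathrm{Id}$, one first uses $|\nabla u|^2 = 2u$ and $\Delta u = N$ to compute, after integrating by parts against a nonnegative test function $\varphi$ with bounded support,
\[
\mathbf{\Gamma}_2(u;\varphi) \;=\; \tfrac12 \int |\D u|^2 \Delta \varphi\,\d\mm - \int \langle \nabla u, \nabla \Delta u\rangle \varphi\,\d\mm \;=\; \int \Delta u\cdot \varphi\,\d\mm \;=\; N\int \varphi\,\d\mm,
\]
so that $\mathbf{\Gamma}_2(u) = N\mm$ as a measure. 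The improved Bochner inequality of Proposition \ref{prop:bochnerimprove} (together with a cut-off approximation of $u$ by elements of $\mathrm{TestF}$, which is needed because $u$ itself has quadratic growth) yields $|\H_u|^2_{\mathrm{HS}} \leq N$ $\mm$-a.e. On the other hand, $\mm \ll \mathcal{H}^N$ forces the analytic dimension to equal $N$ $\mm$-a.e., and Proposition \ref{prop:finitedim} then gives $\mathrm{tr}\,\H_u = \Delta u = N$. The Cauchy-Schwarz inequality in an $N$-dimensional inner product space, $N^2 = (\mathrm{tr}\,\H_u)^2 \leq N|\H_u|^2_{\mathrm{HS}} \leq N^2$, is an equality throughout, forcing $\H_u = \mathrm{Id}$ $\mm$-a.e. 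In particular $u$ is infinitesimally $1$-convex and $-u$ is infinitesimally $(-1)$-convex; the bound $u \leq C\d^2(\cdot, O)$ furnishes the quadratic lower bound required by Theorem \ref{mainth-0} for both.

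Next, Theorems \ref{mainth-0} and \ref{mainth-1} produce regular Lagrangian flows $F_t$ (of $-\nabla u$) and $G_t$ (of $+\nabla u$) with
\[
\d(F_t(x), F_t(y)) \leq e^{-t}\d(x,y), \qquad \d(G_t(x), G_t(y)) \leq e^{t}\d(x,y), \quad \forall\,t \geq 0,
\]
valid everywhere in the continuous representation of statement $5)$ of Theorem \ref{mainth-1}. Uniqueness of regular Lagrangian flows forces $G_t \circ F_t = \mathrm{Id}$, and combining the two one-sided estimates upgrades both to equalities: $\d(F_t(x), F_t(y)) = e^{-t}\d(x,y)$ for all $t \geq 0$ and all $x, y \in X$. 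Introducing the radial coordinate $r := \sqrt{2u}$, the chain rule gives $|\D r| = 1$ on $\{u > 0\}$ and $r \circ F_t = e^{-t} r$ along the flow. Consequently the zero set of $u$ collapses to a single point $O$ (the common limit of all flow trajectories as $t \to +\infty$), and $(t, y) \mapsto F_t(y)$ is a bijection between $\R \times \Sigma$ and $X \setminus \{O\}$ for $\Sigma := r^{-1}(1)$. Since $|\dot F_t|(x) = |\nabla u|(F_t(x)) = r(F_t(x))$, the reparametrization $s := e^{-t}$ turns each orbit $s \mapsto F_{-\log s}(y)$ into a unit-speed geodesic emanating from $O$.

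The main obstacle is the final step: promoting the radial scaling equality to the full cone law of cosines
\[
\d(F_{t_1}(y_1), F_{t_2}(y_2))^2 = e^{-2t_1} + e^{-2t_2} - 2 e^{-t_1-t_2} \cos\bigl(\min\{\d_\Sigma(y_1, y_2), \pi\}\bigr),
\]
where $\d_\Sigma$ is the intrinsic angular metric on $\Sigma$ (determined by the rescaled restriction of $\d$). My plan is to apply Proposition \ref{prop:derw2} to Wasserstein geodesics between smoothings of $\delta_{F_{t_1}(y_1)}$ and $\delta_{F_{t_2}(y_2)}$, using that $r$ is $1$-Lipschitz (hence $\langle \nabla r, \dot\gamma\rangle$ controls the radial component of the velocity of any minimizing geodesic $\gamma$) and exploiting the equality case in the contraction $\d(F_t(x), F_t(y)) = e^{-t}\d(x,y)$ at every scale $t$ to pin down the angular cross-term. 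Once this identity is secured, the isometry of $(X, \d)$ with the metric cone $C(\Sigma)$ follows immediately, yielding the desired warped product structure.
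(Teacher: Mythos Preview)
Your argument through the scaling equality $\d(F_t(x),F_t(y))=e^{-t}\d(x,y)$ is essentially the paper's proof: the improved Bochner inequality together with $\mm\ll\mathcal H^N$ and Proposition~\ref{prop:finitedim} force $\H_u=\mathrm{Id}$, and then Theorems~\ref{mainth-0} and~\ref{mainth-1} applied to $u$ and to $-u$ give matching upper and lower contraction bounds for the two flows, hence equality. (The paper writes the factor as $e^{-Nt}$; since $\H_u=\mathrm{Id}$ gives $K=1$, your $e^{-t}$ is the correct constant.)

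Where you diverge is in what ``warped product-like structure'' is taken to mean. The paper stops exactly at the scaling equality and declares \emph{that} to be the warped product-like structure; it does not attempt the full cone law of cosines or the isometric identification with $C(\Sigma)$, deferring those to the references in the Remark that follows (where the passage from the one-parameter family of similitudes to the cone metric is carried out via the Sobolev space of warped products). Your sketched plan for this final step --- Proposition~\ref{prop:derw2} on smoothings of Dirac masses plus the equality case of the contraction --- is not fleshed out and is not the route taken in the literature, but you do not need it for the theorem as the paper states it. So: trim the last two paragraphs, end at $\d(F_t(x),F_t(y))=e^{-t}\d(x,y)$, and you match the paper.
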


\begin{proof}
Since  $\mm \ll \mathcal{H}^N$,  from the rectifiability theorem (cf.  \cite{MN-S, GP-B, KM-O})  we  know  ${\dim}_{\rm loc} =  N$ is a constant. Then from Proposition \ref{prop:finitedim}, we know $u\in W^{1,2}_{C, \loc}$ and
$\tr \H_u(x)=\Delta u (x)$ $\mm$-a.e. $x \in X$. Hence $\Delta $ is a local operator which can be written in local coordinate.

Since $\Delta u=N$, by Proposition \ref{prop:bochnerimprove} we have
\begin{equation}\label{app-eq1}
N=\Delta u=\frac12 \Delta |\D u|^2-\la \nabla u, \nabla \Delta u\ra \geq  |\H_u|_{\rm HS}^2,~~\mm-\text{a.e.} .
\end{equation}

By Cauchy inequality  and  the fact that ${\dim}_{\rm loc} =  N$ we obtain
\[
 |\H_u|_{\rm HS}^2 \geq \frac1N(\tr \H_u)^2=\frac1N(\Delta u)^2= N.
\]
Combining with \eqref{app-eq1} we know  $\H_u={\rm Id}_N$.

Then we consider the regular Lagrangian flow associated to  $\nabla u$ and $-\nabla u$, which  are denoted by $(F^+_t)_{t\geq 0}$ and $({F}^-_t)_{t \geq 0}$ respectively. We can also construct $F_t$ as we did  in the first example. We know both
\[
\d(F_t (x), F_{t}(y)) \leq e^{-Nt} \d(x, y)
\]
for any $x, y \in X$,  $t>0$, and 
\[
\d(F_t (x), F_{t}(y)) \leq e^{Nt} \d(x, y)
\]
for any $x, y \in X$,  $t <0$.

Therefore, for any $t>0$  we have
\[
\d(F_t (x), F_{t}(y)) \leq e^{-Nt}\d(x, y)=e^{-Nt}\d \big (F_{-t}(F_t(x)), F_{-t}(F_t(y))\big )  \leq e^{-Nt} e^{Nt} \d(F_t (x), F_{t}(y)).
\]

Hence  $\d(F_t(x), F_t(y))=e^{-Nt}\d(x ,y)$, and $(X, \d)$ admits a warped product-like structure.

\end{proof}

\begin{remark}
In ``volume cone implies metric cone theorem" (cf. \cite{CC-L} and \cite{DPG-F}), the target function $u$ is the squared distance function $\frac12\d^2(\cdot, {\rm O})$ where ${\rm O}$ is a fixed point. Then we know $|\D u|^2=2u=\d^2 (\cdot, {\rm O})$.  From Theorem \ref{theorem-cone} above we know there exists a scaling $F_t(\cdot)$, and $|\dot{F}_t|(x)=|\D u|\circ F_t(x)=\d (F_t(x), {\rm O})$.  By studying the Sobolev space of warped product space (cf. \cite{DPG-F, GH-S}),  we can prove  that $(X, \d)$ admits a cone structure, and the point $O$ is exactly the apex.
\end{remark}

\def\cprime{$'$}

\bigskip

Bang-Xian Han, Institute for applied mathematics, University of Bonn

 Endenicher Allee 60 , D-53115 Bonn, Germany
 
{ Email}: han@iam.uni-bonn.de
\end{document}